\documentclass[12pt]{amsart}
\usepackage{amsmath,amsthm,amssymb,euscript,times}

\raggedbottom					
\tolerance=3000					
\hbadness=10000					
\hfuzz=1.5pt					
\setcounter{tocdepth}{1}		
\setcounter{secnumdepth}{1}		

\setlength{\oddsidemargin}{0cm}         \setlength{\evensidemargin}{0cm}
\setlength{\textwidth}{16.5cm}          \setlength{\topmargin}{-.5cm}
\setlength{\textheight}{8.5in}          \setlength{\headheight}{.5cm}
\setlength{\parskip}{.1in}

\usepackage[matrix,arrow,curve,cmtip]{xy}

\xymatrixcolsep{1.9pc}                     
\xymatrixrowsep{1pc}
\newdir{ >}{{}*!/-5pt/\dir{>}}             
\newdir{ |}{{}*!/-5pt/\dir{|}}             

\newtheorem{thm}{Theorem}[section]
\theoremstyle{definition}
\newtheorem{defn}[thm]{Definition}
\newtheorem{prop}[thm]{Proposition}
\newtheorem{cor}[thm]{Corollary}
\newtheorem{lemma}[thm]{Lemma}
\newtheorem{rem}[thm]{Remark}
\newtheorem{exam}[thm]{Example}

\newcommand{\cate}{\mathcal}
\newcommand{\topos}{\EuScript}
\newcommand{\cA}{\cate A}
\newcommand{\B}{\cate B}

\newcommand{\D}{\cate D}

\newcommand{\K}{\cate K}
\newcommand{\LL}{\topos L}

\newcommand{\CC}{\mathbb C}
\newcommand{\NN}{\mathbb N}
\newcommand{\QQ}{\mathbb Q}
\newcommand{\ZZ}{\mathbb Z}
\newcommand{\cS}{\cate S}

\newcommand{\uu}{\mathbf{u}}
\newcommand{\vv}{\mathbf{v}}
\newcommand{\xx}{\mathbf{x}}
\newcommand{\zz}{\mathbf{z}}
\newcommand{\X}{\cate X}

\newcommand{\la}{\leftarrow}
\newcommand{\lla}{\longleftarrow}
\newcommand{\ra}{\rightarrow}
\newcommand{\dra}{\Rightarrow}
\newcommand{\lra}{\longrightarrow}
\newcommand{\leri}{\leftrightarrows}
\newcommand{\inc}{\hookrightarrow}
\newcommand{\linc}{\hookleftarrow}
\newcommand{\llra}[1]{\stackrel{#1}{\lra}}	
\newcommand{\llla}[1]{\stackrel{#1}{\lla}}	
\newcommand{\xra}[1]{\xrightarrow{#1}}		
\newcommand{\into}{\rightarrowtail}
\newcommand{\linto}{\leftarrowtail}
\newcommand{\epi}{\twoheadrightarrow}
\newcommand{\lepi}{\twoheadleftarrow}
\newcommand{\biimp}{\Longleftrightarrow}

\DeclareMathOperator{\colim}{colim\,}
\DeclareMathOperator{\et}{\acute{e}t}
\DeclareMathOperator{\Mod}{Mod}
\DeclareMathOperator{\card}{card}
\DeclareMathOperator{\im}{im}
\DeclareMathOperator{\id}{id}

\newcommand{\str}{{\mathsf{str}}}
\newcommand{\emb}{\mathsf{emb}}
\newcommand{\Grp}{\mathit{Grp}}
\newcommand{\Set}{\mathit{Set}}
\newcommand{\Field}{\mathit{Field}}
\newcommand{\Ab}{\mathit{Ab}}
\newcommand{\FreeAb}{\mathit{FreeAb}}
\newcommand{\mono}{\mathsf{mono}}
\newcommand{\ma}{\mathsf{max}}

\renewcommand{\leq}{\leqslant}
\renewcommand{\geq}{\geqslant}

\begin{document}

\title{Elementary equivalences and accessible functors}
\author{T. Beke and J. Rosick\'y$^{*}$} 

\date{\today}

\thanks{$^{*}$ Supported by the Grant agency of the Czech republic under the grant 
               P201/12/G028.}						
\begin{abstract}
We introduce the notion of $\lambda$-equivalence and $\lambda$-embeddings of objects in suitable categories.  This notion specializes to $\LL_{\infty\lambda}$-equivalence and $\LL_{\infty\lambda}$-elementary embedding for categories of structures in a language of arity less than $\lambda$, and interacts well with functors and $\lambda$-directed colimits.  We recover and extend results of Feferman and Eklof on ``local functors'' without fixing a language in advance.  This is convenient for formalizing Lefschetz's principle in algebraic geometry, which was one of the main applications of the work of Eklof.
\end{abstract}
\maketitle


\section*{Introduction}
Elementary equivalence seems to be an intrinsically syntactic notion, and thus an unlikely subject for categorical model theory.  Categorical model theory, after all, focuses on the properties of models describable via a corresponding category of models, independent of the underlying language.  Karp's theorem, however, equates $\LL_{\infty\lambda}$-equivalence of structures with the existence of a set of partial isomorphisms satisfying certain extension properties.  This paper will introduce a calculus of equivalence of objects in suitable categories that specializes to $\LL_{\infty\lambda}$-equivalence for categories of structures.

Our interest was drawn to this problem by papers of Feferman~\cite{F}, Eklof~\cite{E1} and Hodges~\cite{H} from the 1970's, focused on operations on structures that preserve $\LL_{\infty\lambda}$-equivalence.  A beautiful application was Eklof's formalization in \cite{E2} of the Lefschetz principle, namely, that there is ``only one'' algebraic geometry over any universal domain, that is, algebraically closed field of infinite transcendence degree in a given characteristic.  Since any two universal domains of the same characteristic are $\LL_{\infty\omega}$-equivalent, it is natural to take the meaning of ``only one'' to be ``same, up to $\LL_{\infty\omega}$-equivalence'' and to characterize families of operations preserving this equivalence.  The approach of Hodges is syntactic: essentially, he defines what is an allowable passage from structures to structures via a transfinite sequence of extensions by definition.  The work of Feferman and Eklof, which is very close in spirit to ours, augments the syntactic analysis with the observation that the passage should be \textsl{functorial}.

What is common to this circle of papers is that the underlying objects are taken to be structures in a specific language, with respect to which $\LL_{\infty\omega}$-equivalence, and ``underlying sets'', should be understood.  Here, we introduce a notion of $\lambda$-equivalence of objects, denoted $\sim_\lambda$, in purely category-theoretic terms.  It is, essentially, back-and-forth equivalence satisfying the extension property with respect to objects of size less than $\lambda$, where --- in the categories of interest to us --- a ``size'' for objects can be defined intrinsically.  $\lambda$-equivalence interacts well with functors preserving $\lambda$-directed colimits, and specializes to the theory of Feferman--Eklof over structures.

Section~\ref{monogen} will furnish all the details, but even in the absence of precise definitions, the following example should illustrate aspects of our work.  The statements below follow from Example~\ref{set}, Prop.~\ref{pres}, Thm.~\ref{eklof} and Prop.~\ref{simiso}.
\begin{itemize}
\item Let $\lambda$ be an infinite regular cardinal.  For sets $U$ and $V$, as objects of the category $\Set_\mono$ of sets and injective functions, $U\sim_\lambda V$ iff either $|U|=|V|<\lambda$, or both $|U|\geq\lambda$ and $|V|\geq\lambda$.
\item Fix a field $k$, and let $\Field_k$ be the category of fields and homomorphisms over $k$.  Let $F:\Set_\mono\ra\Field_k$ be a functor that sends $U$ to an algebraic closure of $k(U)$, the purely transcendental extension of $k$ on the transcendence basis $U$.  If $U\sim_\lambda V$ in $\Set_\mono$ then $F(U)\sim_\lambda F(V)$ in $\Field_k$.
\item Let $p$ be a prime distinct from the characteristic of $k$, and let $X_k$ be a reduced scheme of finite type over $k$.  For a field $K$ over $k$, let $X_K$ denote the base extension of $X_k$ to $K$.  Fix $n\in\NN$ and let $H:\Field/k\ra\Ab$ be the functor that sends $K$ to $H^n_{\et}(X_K,\ZZ/p\ZZ)$, the $n^\textup{th}$ \'{e}tale cohomology group of $X_K$ with constant coefficients $\ZZ/p\ZZ$.  $H$ preserves filtered colimits.  Hence, if $K_1\sim_\lambda K_2$ in $\Field/k$ then $H^n_{\et}(X_{K_1},\ZZ/p\ZZ)\sim_\lambda H^n_{\et}(X_{K_1},\ZZ/p\ZZ)$ in the category of abelian groups and homomorphisms.
\item If $A\sim_\lambda B$ in $\Ab$ and one of these groups is finitely generated, then they are isomorphic.
\end{itemize}
Let $\lambda=\omega$.  The upshot is that the finite generation of $H^n(X_K,\ZZ/p\ZZ)$ for one algebraically closed field $K$ of infinite transcendence degree over $\QQ$ --- say, for the complex numbers --- implies that these cohomology groups are finitely generated for any such $K$, and indeed, (non-canonically)\ isomorphic to their value for $K=\CC$.  With Thm.~\ref{emb-pres} in place of Thm.~\ref{eklof}, one can then prove that any inclusion $K_1\inc K_2$ between algebraically closed fields of infinite transcendence degree over the prime field, induces a homomorphism $H^n_{\et}(X_{K_1},\ZZ/p\ZZ)\ra H^n_{\et}(X_{K_1},\ZZ/p\ZZ)$ that is an $\omega$-embedding in the category of abelian groups, hence an isomorphism.  These statements have just the form predicted by the Lefschetz principle.

The extra-logical input --- namely, functoriality of \'{e}tale cohomology, its preservation of suitable colimits, and its behavior over fields such as the complex numbers, where topological tools are available --- is crucial, of course.  (Nor are the results new: they are well-known consequences of the proper base change theorem.)  The point is that the notions of $\lambda$-equivalence and $\lambda$-embedding form a good fit with the `natural language' of algebraic geometry that employs functors, natural transformations, isomorphisms and generating ranks of algebraic objects etc.  This is not to say that it is impossible to encode algebraic geometry within a first-order language fixed in advance, but it is a delicate matter indeed, especially the seemingly second-order structures associated with Grothendieck sites, sheaves and derived functors.  The reader should consult the second part of Eklof~\cite{E2} for an `encoding' of a much simpler situation.

The notion of $\sim_\lambda$ within, say, the category $\Field_k$ is insensitive as to what language one chooses to axiomatize fields with: one can reason with mathematical objects directly.  $\lambda$-equivalence of fields \textsl{is}, however, sensitive as to what is taken to be a morphism of fields.  In fact, the internal notion of ``size'' depends not only on objects, but on the class of morphisms chosen: the input to the machinery of $\lambda$-equivalence is a \textsl{category}.  This category is not quite arbitrary; for $\lambda$-equivalence to work well, the category has to satisfy the property that every object can be written as a $\lambda$-directed colimit of $\lambda$-generated subobjects.  (See the meaning of $\lambda$-generated below.)  This condition follows from the axioms of Abstract Elementary Classes, and also from those of accessible categories.  It is hard, in fact, to come up with a situation where one can do meaningful infinitary model theory where this condition is not satisfied for some $\lambda$.

We plan to return to applications of $\lambda$-equivalence to the Lefschetz--Weil principle, and compare our approach with those of Feferman, Eklof of Hodges.  This paper is devoted to the fundamentals of $\sim_\lambda$: the categories we are concerned with, spans and equivalences, elementary embeddings, and elementary chains.  Many of the proofs, free of underlying sets, are combinatorial arguments with diagrams in categories satisfying Def.~\ref{monogendef}.  It is a pleasant surprise that one can do so much, starting with so little.

\section{Mono-generated categories}  \label{monogen}
\begin{defn}  \label{monogendef}
Let $\lambda$ be a regular cardinal and $\cA$ a category.  An object $X$ of $\cA$ is \textsl{$\lambda$-generated} if $\hom(X,-):\cA\ra\Set$ preserves those $\lambda$-directed colimits of monomorphisms that exist in $\cA$.  The category $\cA$ is \textsl{$\lambda$-mono-generated} provided every object of $\cA$ can be written as a colimit of a $\lambda$-directed diagram consisting of monomorphisms and $\lambda$-generated objects.  A category is \textit{mono-generated} if it is $\lambda$-mono-generated for some regular cardinal $\lambda$.
\end{defn}

Note that it is not assumed that a $\lambda$-mono-generated category has all $\lambda$-directed colimits of monos, though we will occasionally need this as a separate assumption.  Functors $F:\cA\ra\B$ between $\lambda$-mono-generated categories will typically be assumed to preserve those $\lambda$-directed colimits of monos that exist in $\cA$.  Mono-generated should probably be thought of as the weakest assumption on a category that allows one to `approximate' an arbitrary object $X$ by `small' subobjects, where the sense of approximation ($\lambda$-directed colimit)\ and smallness (being $\lambda$-generated)\ are defined in terms of the category itself.

\begin{lemma}  \label{monococone}
Let $\cA$ be a $\lambda$-mono-generated category and $\D$ a $\lambda$-directed diagram.  Below, we will assume that the colimits mentioned actually exist.

$(i)$ If $D:\D\ra\cA$ is a functor taking values in monomorphisms then the components $k_d:D(d)\to\colim D$ of its colimit cocone are monomorphisms.  $(ii)$ If $D_1,D_2:\D\ra\cA$ are functors taking values in monomorphisms, and $\eta:D_1\ra D_2$ is a natural transformation such that $\eta(d)$ is a monomorphism for all $d\in\D$, then the induced map $m:\colim D_1\ra\colim D_2$ is mono.  $(iii)$ Let $D$ be as in part $(i)$.  If $E$ is the target of a cocone consisting of monomorphisms on $D$, then the induced map $e:\colim D\ra E$ is mono.
\end{lemma}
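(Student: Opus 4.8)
The plan is to isolate a single reduction lemma and then run all three parts through it. The lemma I would establish first is: a morphism $f\colon A\to B$ in a $\lambda$-mono-generated category is a monomorphism \emph{if and only if} it is left-cancellable against maps out of $\lambda$-generated objects, i.e. for every $\lambda$-generated $G$ and every pair $u,v\colon G\to A$, the equality $fu=fv$ forces $u=v$. The forward direction is trivial; for the converse I would write an arbitrary test object $T$ as a $\lambda$-directed colimit $T=\colim_i G_i$ of monomorphisms with each $G_i$ being $\lambda$-generated, which is possible precisely because $\cA$ is $\lambda$-mono-generated, with cocone $c_i\colon G_i\to T$. Given $g,h\colon T\to A$ with $fg=fh$, the hypothesis applied to $gc_i,hc_i$ yields $gc_i=hc_i$ for every $i$, and since a colimit cocone is jointly epic, $g=h$. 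This lemma reduces every part to factoring test maps through the diagram and then cancelling a single monomorphism.

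For $(i)$ I would fix $d_0$ and a $\lambda$-generated $G$, and suppose $u,v\colon G\to D(d_0)$ satisfy $k_{d_0}u=k_{d_0}v$. Because $G$ is $\lambda$-generated and $D$ is a $\lambda$-directed diagram of monomorphisms, $\hom(G,-)$ preserves this colimit, so $\hom(G,\colim D)=\colim_d\hom(G,D(d))$ as a $\lambda$-directed colimit in $\Set$. Under this identification $k_{d_0}u$ and $k_{d_0}v$ are the classes of $u$ and $v$; their coincidence in a filtered colimit of sets means some transition map already identifies them, i.e. there is $d_1\geq d_0$ with $D(d_0\to d_1)\,u=D(d_0\to d_1)\,v$. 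Since $D(d_0\to d_1)$ is a monomorphism, $u=v$, and the reduction lemma delivers that $k_{d_0}$ is mono.

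Part $(iii)$ is then immediate: for $\lambda$-generated $G$ and $u,v\colon G\to\colim D$ with $eu=ev$, the same preservation property lets me factor $u=k_d\bar u$ and $v=k_d\bar v$ through a common $D(d)$; from $ek_d=e_d$ I obtain $e_d\bar u=e_d\bar v$, and since $e_d$ is a monomorphism by hypothesis, $\bar u=\bar v$, whence $u=v$. For $(ii)$ I would argue analogously, factoring $u,v\colon G\to\colim D_1$ as $u=k^1_d\bar u$ and $v=k^1_d\bar v$; using $mk^1_d=k^2_d\eta_d$ together with $mu=mv$ gives $k^2_d(\eta_d\bar u)=k^2_d(\eta_d\bar v)$. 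Now $k^2_d$ is a monomorphism by part $(i)$, so $\eta_d\bar u=\eta_d\bar v$, and since $\eta_d$ is mono, $\bar u=\bar v$, hence $u=v$.

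The conceptual heart — and the only place where something beyond formal manipulation occurs — is part $(i)$: there the cancelling monomorphism is not handed to us in the data but must be manufactured as a transition map $D(d_0\to d_1)$, and its relevance rests on the set-level fact that in a $\lambda$-directed, hence filtered, colimit of sets coincident elements already coincide at some stage. Parts $(ii)$ and $(iii)$ are comparatively routine once $(i)$ and the reduction lemma are available, since each supplies its monomorphism ($\eta_d$, respectively $e_d$) directly. The one point demanding care throughout is the standing hypothesis that the colimits in question exist, so that the defining preservation property of $\lambda$-generated objects may legitimately be invoked.
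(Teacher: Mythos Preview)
Your proof is correct and follows essentially the same approach as the paper: reduce to test maps from $\lambda$-generated objects, factor those through the diagram, and cancel a monomorphism. The only organizational differences are that you state the reduction lemma explicitly (the paper uses it tacitly with ``it suffices to check''), and you prove $(iii)$ directly whereas the paper obtains it from $(ii)$ by taking $D_2$ to be the constant functor at $E$.
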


\begin{proof}
$(i)$ It suffices to check that $k_df=k_dg$ implies $f=g$ for any $f,g:G\to D(d)$ with $\lambda$-generated $G$.  But indeed, since $G$ is $\lambda$-generated and $\D$ is $\lambda$-directed, $D(h)f=D(h)g$ for some $h:d\to d'$ in $\D$, whence $f=g$.  $(ii)$ Let $G$ be $\lambda$-generated and let maps $f,g:G\ra\colim D_1$ be given such that $mf=mg$.  We wish to prove $f=g$.  Find $d\in\D$ so that $f$ and $g$ factor respectively as $G\xra{f_0,\,g_0}D_1(d)\ra\colim D_1$.  The composites
\[  G \xra{f_0,\,g_0} D_1(d) \llra{\eta(d)} D_2(d) \ra \colim D_2 \]
are both equal to $mf=mg$.  Since $\eta(d)$ is mono and $D_2(d)\ra\colim D_2$ is mono by $(i)$, $f_0=g_0$ so $f=g$.  $(iii)$  Apply $(ii)$ with $D_1=D$, $D_2$ the functor that is constant on $E$, and $\eta$ the cocone.
\end{proof}

\begin{cor}  \label{monomono}
Let $\cA$ be a $\lambda$-mono-generated category and let $\cA_\mono$ be the subcategory of $\cA$ with the same objects, but only the monos as morphisms.  Then $\cA_\mono$ is $\lambda$-mono-generated.
\end{cor}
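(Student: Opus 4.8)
The plan is to leverage Lemma~\ref{monococone} to transport the $\lambda$-mono-generated structure of $\cA$ across the inclusion $\cA_\mono\inc\cA$. Since $\cA_\mono$ has the same objects as $\cA$, it suffices to produce, for each object $X$, a $\lambda$-directed diagram of monos with vertices that are $\lambda$-generated \emph{in $\cA_\mono$}, whose colimit in $\cA_\mono$ is $X$. I would take the witnessing diagram from the $\lambda$-mono-generated structure of $\cA$ itself and verify two things: that its colimit is computed the same way in $\cA_\mono$, and that its vertices remain $\lambda$-generated after passing to $\cA_\mono$.

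First I would record the colimit-comparison. If $D:\D\ra\cA$ is a $\lambda$-directed diagram valued in monos whose colimit $(L,k_d)$ exists in $\cA$, then $(L,k_d)$ is also its colimit in $\cA_\mono$. Indeed, the legs $k_d$ are monos by Lemma~\ref{monococone}(i), so $(L,k_d)$ is a cocone in $\cA_\mono$; and given any cocone $(E,e_d)$ of monos, the comparison map $e:L\ra E$ supplied by the colimit in $\cA$ is itself a mono by Lemma~\ref{monococone}(iii), while its uniqueness in $\cA_\mono$ is inherited from its uniqueness in $\cA$. Applying this to the defining decomposition of $X$ guaranteed by Def.~\ref{monogendef} shows that the same diagram exhibits $X$ as a $\lambda$-directed colimit of monos in $\cA_\mono$.

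Next I would show that a $\lambda$-generated object $G$ of $\cA$ is $\lambda$-generated in $\cA_\mono$. Every morphism of $\cA_\mono$ is a mono, so I must check that $\hom_{\cA_\mono}(G,-)$ preserves each $\lambda$-directed colimit $(L,k_d)$ existing in $\cA_\mono$. The pleasant point is that $\hom_{\cA_\mono}(G,Y)$ is just the set of monos $G\ra Y$, and a factorization $f=k_d g$ of a mono $f$ forces $g$ to be a mono too. Thus, once $(L,k_d)$ is known to be the colimit of the same diagram in $\cA$, both halves of preservation follow from $G$ being $\lambda$-generated in $\cA$: surjectivity because a mono $f:G\ra L$ factors as $k_d g$ in $\cA$, with $g$ automatically mono; injectivity because two monos into $D(d)$ identified by $k_d$ are already identified by some transition map $D(\alpha)$.

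The main obstacle is the clause ``once $(L,k_d)$ is known to be the colimit of the diagram in $\cA$''. Preservation of a colimit by $\hom_{\cA_\mono}(G,-)$ is a statement about factoring maps \emph{into} the colimit, and for that I genuinely need $(L,k_d)$ to enjoy the universal property throughout $\cA$, not merely among cocones of monos. By the colimit-comparison above this is automatic as soon as the colimit exists in $\cA$, so the argument closes immediately under the side hypothesis, flagged after Def.~\ref{monogendef}, that $\cA$ admits $\lambda$-directed colimits of monos. I expect the delicate point to be the general case, where a colimit might exist in $\cA_\mono$ without existing in $\cA$; the natural resolution is to restrict attention to the colimits inherited from $\cA$, which are the only ones entering the decomposition of $X$. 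Granting this, combining the two preceding steps exhibits $X$ as a $\lambda$-directed colimit of monos of $\lambda$-generated objects of $\cA_\mono$, which is exactly what Def.~\ref{monogendef} demands.
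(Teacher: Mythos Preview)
Your argument is a careful unpacking of the paper's own one-line proof, which simply asserts that by Lemma~\ref{monococone} the inclusion $\cA_\mono\inc\cA$ creates $\lambda$-directed colimits and that the $\lambda$-generated objects coincide in the two categories. The subtlety you flag --- that $\lambda$-generatedness in $\cA_\mono$ must in principle be tested against \emph{all} $\lambda$-directed colimits existing in $\cA_\mono$, not only those inherited from $\cA$ --- is real and is glossed over in the paper as well; your proposal is at least as complete as the original.
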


Indeed, by the lemma the inclusion $\cA_\mono\inc\cA$ creates $\lambda$-directed colimits, and an object of $\cA$ is $\lambda$-generated in $\cA$ if and only if it is $\lambda$-generated in $\cA_\mono$.  \qed

The next observation is the analogue of ``raising the index of accessibility'' in the context of mono-generated categories.  Recall the relation $\triangleleft$ of ``sharply less than'' and its properties from Makkai--Par\'{e}~\cite{MP}~2.3.

\begin{prop}  \label{raise}
Suppose $\cA$ is a $\lambda$-mono-generated categories possessing colimits of $\lambda$-directed diagrams of monos.  If $\lambda\triangleleft\kappa$, then $\cA$ is $\kappa$-mono-generated.
\end{prop}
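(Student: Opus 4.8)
The plan is to imitate the standard ``raising the index of accessibility'' argument of Makkai--Par\'e~\cite{MP}~2.3, but to carry it out entirely with monomorphisms, using Lemma~\ref{monococone} in place of the usual facts about directed colimits in accessible categories. Fix an object $X$ of $\cA$. Since $\cA$ is $\lambda$-mono-generated, I would first write $X=\colim_{i\in I}A_i$ for a $\lambda$-directed diagram $D:I\ra\cA$ taking values in monomorphisms with every $A_i$ being $\lambda$-generated; by Lemma~\ref{monococone}$(i)$ the colimit cocone maps $A_i\into X$ are then mono. The goal is to reassemble this presentation as a $\kappa$-directed colimit of $\kappa$-generated subobjects along monos.

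The indexing comes from the combinatorial content of $\lambda\triangleleft\kappa$ recorded in \cite{MP}~2.3: every subset of $I$ of cardinality $<\kappa$ is contained in a $\lambda$-directed subset $J\subseteq I$ with $|J|<\kappa$ (one closes under upper bounds of $\lambda$-small subsets, the cardinality staying below $\kappa$ precisely because $\mu^{<\lambda}<\kappa$ for $\mu<\kappa$ and $\kappa$ is regular). It follows that the poset $\mathcal J$ of all such $J$, ordered by inclusion, is $\kappa$-directed with union $I$. For each $J\in\mathcal J$ set $A_J=\colim_{j\in J}D|_J$, which exists by the standing hypothesis on $\cA$. Applying Lemma~\ref{monococone}$(iii)$ with $E=A_{J'}$ for $J\subseteq J'$, and with $E=X$, shows that the canonical maps $A_J\into A_{J'}$ and $A_J\into X$ are mono. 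Since the $J\in\mathcal J$ cover $I$, a cofinality (final functor) argument gives $\colim_{J\in\mathcal J}A_J\cong\colim_{i\in I}A_i=X$, exhibiting $X$ as a $\kappa$-directed colimit of the $A_J$ along monomorphisms.

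It remains to check that each $A_J$ is $\kappa$-generated, and I expect this to be the crux. Let $Y=\colim_{s\in S}Y_s$ be a $\kappa$-directed colimit of monos and $f:A_J\ra Y$ a map; I must show $f$ factors essentially uniquely through some $Y_s$. Writing $A_J=\colim_{j\in J}A_j$, the map $f$ corresponds to a compatible family $f_j:A_j\ra Y$. Because $\lambda\leq\kappa$, the diagram $(Y_s)$ is also a $\lambda$-directed colimit of monos, so each $\lambda$-generated $A_j$ factors $f_j$ through some $Y_{s(j)}$; as $|J|<\kappa$ and $S$ is $\kappa$-directed, a single index $s$ dominates all the $s(j)$. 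The monomorphisms $Y_s\into Y$ (Lemma~\ref{monococone}$(i)$) make each factorization $A_j\ra Y_s$ unique by left-cancellation, and this uniqueness forces the maps $A_j\ra Y_s$ to be compatible with the transition maps of $D|_J$, hence to glue to a map $A_J\ra Y_s$ lifting $f$; uniqueness of this lift follows once more from $Y_s\into Y$ being mono. Thus $\hom(A_J,-)$ preserves $Y$, and $A_J$ is $\kappa$-generated.

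The main obstacle is exactly this last coherence step: the individual factorizations $f_j$ through $Y_s$ must be shown to assemble into a single morphism out of the colimit $A_J$, and the only thing guaranteeing both their compatibility and the uniqueness of the resulting lift is the left-cancellability supplied by the hypotheses that $D$ takes values in monos and that the colimit cocone of $Y$ consists of monos. This is where the mono-generated framework does genuine work beyond the purely accessible-category analogue.
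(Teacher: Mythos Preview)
Your proof is correct and follows essentially the same approach as the paper's: reindex the given $\lambda$-directed presentation of $X$ by the $\kappa$-directed poset of $\lambda$-directed subdiagrams of size $<\kappa$ (citing \cite{MP}~2.3), take colimits over those subdiagrams, and argue that these partial colimits are $\kappa$-generated. The paper's version is terse, outsourcing the $\kappa$-generatedness of $A_J$ to an analogy with \cite{AR}~1.16, whereas you spell that argument out in full and make explicit the appeals to Lemma~\ref{monococone} needed to ensure the transition maps and cocone maps are mono---details the paper's sketch leaves implicit.
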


\begin{proof}
Write an object $X$ as the colimit of a $\lambda$-directed diagram consisting of monomorphisms and $\lambda$-generated objects. Let $\D_\kappa$ be the poset formed by $\lambda$-directed subdiagrams of $\D$ of size less than $\kappa$.  By \cite{MP}~Cor.~2.3.9, $\D_\kappa$ is $\kappa$-directed, and the colimit of the functor $\D_\kappa\ra\cA$ that sends each $\lambda$-directed subdiagram of $\D$ of size less than $\kappa$, to its colimit, is isomorphic to $X$. Since
colimits of diagrams of size less than $\kappa$ consisting of monomorphisms and $\lambda$-generated objects are $\kappa$-generated (the proof is analogous
to \cite{AR}, 1.16), $\cA$ is $\kappa$-mono-generated. 
\end{proof}

Mono-generated is a weakening of the notion of \textsl{accessible category}, introduced by Makkai and Par\'{e} in ~\cite{MP}.  The rest of this section (which may be skipped on first reading)\ concerns the relation between accessibility, mono-generatedness, and its variants.

Recall that a category $\cA$ is \textsl{$\lambda$-accessible}, where $\lambda$ is a regular cardinal, provided that
\begin{enumerate}
\item[($a$)] $\cA$ has $\lambda$-filtered colimits and
\item[($b$)] a set $\X$ of $\lambda$-presentable objects such that every object of $\cA$ is a $\lambda$-filtered colimit of objects from $\X$.
\end{enumerate}
Here, an object $X$ is called $\lambda$-\textit{presentable} if $\hom(X,-):\cA\to\Set$ preserves $\lambda$-filtered colimits.  A functor $F:\cA\ra\B$ between $\lambda$-accessible categories is called $\lambda$-accessible if it preserves $\lambda$-filtered colimits.  See the monographs of Makkai--Par\'{e}~\cite{MP} or Ad\'{a}mek--Rosick\'{y}~\cite{AR} for detailed information on the (2-)category of accessible categories.

Let us call $\cA$ \textsl{$\lambda$-mono-accessible} if
\begin{enumerate}
\item[($a'$)] $\cA$ has $\lambda$-directed colimits of monomorphisms and
\item[($b'$)] a set $\X$ of $\lambda$-generated objects such that every object of $\cA$ is a $\lambda$-directed colimit of a diagram consisting of monomorphisms and objects from $\X$.
\end{enumerate}
This notion was introduced in Ad\'{a}mek--Rosick\'{y}~\cite{AR1}, motivated by the \textsl{locally generated} categories of Gabriel and Ulmer.

Chorny and Rosick\'{y}~\cite{CR} investigated the consequences of omitting the `set' clause in condition ($b$).  They call $\cA$ \textsl{$\lambda$-class-accessible} if
\begin{enumerate}
\item[($a$)] $\cA$ has $\lambda$-filtered colimits and
\item[($b''$)] every object of $\cA$ can be written as a $\lambda$-filtered colimit of $\lambda$-presentable objects.
\end{enumerate}

Let us point out some relations between these notions:

(1) Every locally $\lambda$-presentable category (that is, cocomplete $\lambda$-accessible category)\ is locally generated, and mono-accessible; see~\cite{AR}~1.70 and \cite{AR1}.

(2) Every accessible category $\cA$ is mono-accessible.  Note, however, that a $\lambda$-accessible $\cA$ may be $\kappa$-mono-accessible only for certain $\kappa\geq\lambda$.  Indeed, consider the full subcategory of the category of morphisms of $\cA$ whose objects are the monomorphisms of $\cA$.  By Prop.~6.2.1 of Makkai-Par\'{e}~\cite{MP}, this category is accessible, hence monomorphisms of $\cA$ are closed under $\kappa_1$-filtered colimits for some $\kappa_1$.  By Theorem~2.34 of Ad\'{a}mek-Rosick\'{y}~\cite{MP}, the subcategory of $\cA$ consisting of $\lambda$-pure monomorphisms is accessible.  The proof shows, in particular, that every object $X$ is the $\kappa_2$-directed union of $\kappa_2$-presentable pure subobjects of $X$, for some $\kappa_2$.  Thence $\cA$ is $\max\{\kappa_1,\kappa_2\}$-mono-accessible.

(3) A mono-accessible category need not be accessible.  For example, let $\cA$ be a mono-accessible category and add freely an idempotent endomorphism $f:X\ra X$ to an object $X$ of $\cA$.  Let $\cA_+$ be the resulting category.  Since $\cA_+$ has the same monomorphisms as $\cA$, it is mono-accessible, but the idempotent $f$ does not split in $\cA_+$.  In an accessible category, however, every idempotent splits, cf.~\cite{AR}~2.4.

(4) If all morphisms of a category $\cA$ are monomorphisms, then $\cA$ is $\lambda$-accessible if and only if it is $\lambda$-mono-accessible.  An important source of examples are Shelah's Abstract Elementary Classes. (See the monograph of Baldwin~\cite{Ba} for an introduction.)  Let $\K$ be an Abstract Elementary Class, and let $\cA_\K$ be the category whose objects are the structures in $\K$ and whose morphisms are the strong embeddings.  Then $\cA_\K$ is $\lambda^+$-mono-accessible where $\lambda$ is the L\"{o}wenheim-Skolem number of $\K$, cf.\ Lieberman~\cite{L}.

The mono-accessible categories arising this way are special; for example, they have directed colimits preserved by some functor into $\Set$.  See Beke-Rosick\'{y}~\cite{BR} for category-theoretic characterizations of AEC.

(5) Let $\cA$ be a class-accessible but not accessible category, for example the free $\kappa$-filtered cocompletion of a large category, or the category of presheaves on a large category.  Then $\cA_\mono$ is mono-generated but not mono-accessible.

(6) Consider $\FreeAb_\mono$, the full subcategory of the category $\Ab_\mono$ of abelian groups and monomorphisms, with objects the free abelian groups.  Let $F:\D\ra\FreeAb_\mono$ be a filtered diagram.  Suppose that $X=\colim F$ exists.  Let $Y$ be the colimit of the composite $F:\D\ra\FreeAb_\mono\ra\Ab_\mono$.  There is a natural monomorphism $Y\ra X$; since $X$ is a free abelian group, so is $Y$.  So $Y$ has the universal mapping property in $\FreeAb_\mono$ as well; since colimits are unique up to isomorphism, $X$ is isomorphic to $Y$.  So filtered colimits in $\FreeAb_\mono$ (to the extent they exist)\ are ``standard'', i.e.\ created by the inclusion $\FreeAb_\mono\ra\Ab_\mono$.  It follows that any finitely generated free abelian group is $\omega$-generated as an object of $\FreeAb_\mono$.  Since any free abelian group is, canonically, the directed union of its finitely generated subgroups, $\FreeAb_\mono$ is an $\omega$-mono-generated category.

By contrast, the accessibility $\FreeAb_\mono$ depends on set theory.  In the constructible universe, $\FreeAb_\mono$ is not accessible, so it is consistent (relative to ZFC)\ that $\FreeAb_\mono$ is not accessible.  But if $\kappa$ is a compact cardinal, then the $\kappa$-filtered colimit of free abelian groups is free, and $\FreeAb_\mono$ is $\kappa$-accessible, cf.\ Eklof--Mekler~\cite{EM}.

Let $F:\K_1\ra\K_2$ be a $\kappa$-accessible functor between $\kappa$-accessible categories, all of whose morphisms are monos.  Then a similar conclusion holds for the \textsl{powerful image} of $F$, cf.\ Makkai--Par\'{e}~\cite{MP}~5.5.

(7) Finally, for any Abstract Elementary Class in Shelah's sense, the class of structures and strong embeddings form an accessible category.  The relation between AEC's and accessible categories has been investigated by several articles; see, for example, Beke--Rosick\'{y}~\cite{BR}.

All in all, one has logical implications
\[ \xymatrix{ \textup{AEC} \ar[r] & \textup{accessible} \ar[r]\ar[d] & \textup{mono-accessible} \ar[d] \\
& \textup{class-accessible} \ar[r] & \textup{mono-generated} } \]
none of which is reversible.

Observe that the passage from $\lambda$-mono-accessible categories to $\lambda$-mono-generated ones is the same as the passage from accessible categories to preaccessible ones, in the sense of Ad\'{a}mek--Rosick\'{y}~\cite{AR2}.

\section{Spans and equivalences}
Let $\cA$ be a category, $X$ and $Y$ objects of $\cA$.  Recall that a category-theoretic span between $X$ and $Y$ is a diagram of the form $X\la U\ra Y$.  If the arrow $X\la U$ is mono, this can be thought of as a `partial morphism' from $X$ to $Y$; if both arrows are mono, as a `partial isomorphism', i.e.\ an identification of a subobject of $X$ with a subobject of $Y$.  Some of this work can be developed in the context of category-theoretic spans, but many of our applications are restricted to the case of spans whose arrows are mono.  So as not to introduce cumbersome terminology, we will restrict attention to mono-spans, but call them simply \textsl{spans}.

\begin{defn}
A \textsl{span} between $X$ and $Y$ is a diagram 
\[  X\linto U\into Y  \]
where both arrows are monomorphisms.  When there is no danger of confusion, we will sometimes refer to a span by its center object $U$.  A span $X\linto U\into Y$ will be called a $\lambda$-\textit{span} if $U$ is $\lambda$-generated.  (We will mostly, though not exclusively, use this notion if the ambient category is $\lambda$-mono-generated.)

A morphism $(X\linto U\into Y)\to(X\linto V\into Y)$ of spans is a map $U\to V$ such that the diagram
\[ \xymatrix{ X & V\ar[l]\ar[r] & Y \\
& U \ar[lu]\ar[ru]\ar[u] } \]
commutes.
\end{defn}

\begin{defn}  \label{dense}
Let $\cA$ be a category, $X$ and $Y$ objects of $\cA$.  A set $X\linto U_i\into Y$, $i\in I$ of spans is $\lambda$-\textsl{dense} if it is non-empty and
\begin{itemize}
\item[\textsf{(back)}] for all $i\in I$ and monomorphism $X\linto G$ with $\lambda$-generated $G$, there exist $j\in I$ and morphisms $G\ra U_j$ and $U_i\ra U_j$ such that
\[ \xymatrix{ & U_j \ar[ld]\ar[rd] \\
X & G\ar[l]\ar[u] & Y \\
& U_i \ar[lu]\ar[ru]\ar@/_5mm/[uu] } \]
commutes.
\item[\textsf{(forth)}] for all $i\in I$ and monomorphism $G\into Y$ with $\lambda$-generated $G$, there exist $j\in I$ and morphisms $G\ra U_j$ and $U_i\ra U_j$ such that
\[ \xymatrix{ & U_j \ar[ld]\ar[rd] \\
X & G\ar[r]\ar[u] & Y \\
& U_i \ar[lu]\ar[ru]\ar@/^5mm/[uu] } \]
commutes.
\end{itemize}
\end{defn}

\begin{defn}  \label{equi}
Let $X$ and $Y$ be objects of a category $\cA$.  We will say that $X$ and $Y$ are $\lambda$-equivalent, denoted $X\sim_\lambda Y$, if there exists a $\lambda$-dense set of spans between $X$ and $Y$.
\end{defn}

\begin{rem}  \label{max}
\begin{itemize}
\item[(1)] Obviously, $X\sim_\kappa Y$ implies $X\sim_\lambda Y$ for $\lambda<\kappa$; and if $X$ and $Y$ are isomorphic, then $X\sim_\lambda Y$, via any $X\llla{f}U\llra{g}Y$ where $f,g$ are isomorphisms.  The relation $\sim_\lambda$ is symmetric and reflexive.  We will soon see that it is transitive, hence an equivalence relation, when $\cA$ is $\lambda$-mono-generated.
\item[(2)] Let $\cA$ be a $\lambda$-mono-generated category.  Then $X\sim_\lambda Y$ in $\cA$ if and only if $X\sim_\lambda Y$ in $\cA_\mono$.  This follows from Cor.~\ref{monomono} and the fact that all morphisms in the test diagrams in Def.~\ref{dense} are mono.
\item[(3)] If $\cS_i$, $i\in I$, are $\lambda$-dense sets of spans between $X$ and $Y$, so is their union $\bigcup_{i\in I}\cS_i$.  It follows that if $X\sim_\lambda Y$, then there is a greatest $\lambda$-dense set of spans between $X$ and $Y$.  This greatest $\lambda$-dense set of spans is a sieve in the sense that if $X\linto V\into Y$ belongs to it and
\[  (X\linto U\into Y)\to(X\linto V\into Y)  \]
is a morphism of spans, then $X\linto U\into Y$ belongs to it as well.  Analogously, there exists a greatest $\lambda$-dense set of $\lambda$-spans between $X$ and $Y$.
\end{itemize}
\end{rem}

\begin{exam}  \label{set}
Work in the category $\Set$, and let $X$ and $Y$ be sets.  Suppose $|X|=|Y|$ or both $|X|\geq\lambda$ and $|Y|\geq\lambda$.  Then the set of all $\lambda$-spans between $X$ and $Y$ is $\lambda$-dense.

If $|X|=|Y|$ then the greatest $\lambda$-dense set of spans between $X$ and $Y$ is that of all spans.  If $|X|,|Y|\geq\lambda$ and $|X|\not=|Y|$ then it is
\[  \{ \, X\stackrel{i}{\linto}U\stackrel{j}{\into}Y\textup{\ such that\ }|X-\im(i)|\geq\lambda\textup{\ and\ }|Y-\im(j)|\geq\lambda| \, \}. \]
If, say, $|X|<\lambda$ and $X\sim_\lambda Y$ then the back property establishes the existence of a span $X\linto U\into Y$ in the family where the left arrow is bijective, which contradicts the forth property unless the right arrow is onto.  So, in both $\Set$ and $\Set_\mono$, $X\sim_\lambda Y$ if and only if either $|X|=|Y|<\lambda$ or $\lambda\leq |X|,|Y|$.  This is the same as $X$ and $Y$ being $L_{\infty\lambda}$-equivalent as structures for the language containing only equality.  As we will see in Theorem~\ref{karp}, this is no coincidence.
\end{exam}

\begin{prop}  \label{densevert}
Suppose the category $\cA$ possesses a class $\D$ of objects that are $\lambda$-mono-dense; that is, every object of $\cA$ can be written as a $\lambda$-directed colimit of subobjects that are isomorphic to some element of $\D$.  Suppose $X\sim_\lambda Y$ in $\cA$.  Then there exists a $\lambda$-dense set of spans between $X$ and $Y$ of the form $X\linto V\into Y$ where $V\in\D$.
\end{prop}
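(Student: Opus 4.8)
The plan is to single out, inside the largest $\lambda$-dense set of spans provided by $X\sim_\lambda Y$, those spans whose center lies in $\D$, and to verify that this subfamily is itself $\lambda$-dense. Since $X\sim_\lambda Y$, Remark~\ref{max}(3) furnishes the greatest $\lambda$-dense set $\cS$ of spans between $X$ and $Y$ and records that $\cS$ is a \emph{sieve}: any span admitting a morphism of spans into a member of $\cS$ again belongs to $\cS$. Let $\cS'\subseteq\cS$ consist of the spans $X\linto V\into Y$ of $\cS$ with $V\in\D$; this is a set, being a subset of $\cS$. I would show that $\cS'$ is $\lambda$-dense, which is exactly the assertion. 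The sieve property is what makes this palatable: to certify membership in $\cS'$ it suffices to produce a $\D$-object that maps, as a span, into some member of $\cS$.

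First, nonemptiness. Pick any span $X\linto U\into Y$ in $\cS$ and use the $\lambda$-mono-density of $\D$ to write $U$ as a $\lambda$-directed colimit $U=\colim_d V_d$ of subobjects $V_d\in\D$. Composing each inclusion $V_d\into U$ with the two legs of $U$ turns $V_d\into U$ into a morphism of spans, so by the sieve property each resulting span $X\linto V_d\into Y$ lies in $\cS$, hence in $\cS'$; in particular $\cS'$ is nonempty.

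For the \textsf{(back)} clause, suppose $X\linto V\into Y$ belongs to $\cS'$ and let $X\linto G$ be a monomorphism with $G$ $\lambda$-generated. Applying the \textsf{(back)} clause for $\cS$ to the span $V$ and to $G$ yields a span $X\linto W\into Y$ in $\cS$ together with maps $G\ra W$ and $V\ra W$ compatible with all legs. Now present $W=\colim_d W_d$ as a $\lambda$-directed colimit of subobjects $W_d\in\D$. Both $G$ and $V$ are $\lambda$-generated — $G$ by hypothesis and $V$ because $V\in\D$ — so $\hom(G,-)$ and $\hom(V,-)$ preserve this colimit, and hence $G\ra W$ and $V\ra W$ each factor through a colimit inclusion; by $\lambda$-directedness they factor through a common $W_d$. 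Put $V':=W_d\in\D$. The inclusion $V'\into W$ is a morphism of spans, so $V'$ lies in $\cS$ and therefore in $\cS'$, and the two factorizations supply the required maps $G\ra V'$ and $V\ra V'$. The needed leg identities — for instance that $G\ra V'\ra X$ recovers the given mono $G\ra X$ and that $V\ra V'\ra X$ recovers the left leg of $V$ — follow by composing the corresponding identities already guaranteed for $W$ with the factorization equalities $(G\ra W_d\ra W)=(G\ra W)$ and $(V\ra W_d\ra W)=(V\ra W)$. The \textsf{(forth)} clause is verified identically after interchanging the roles of $X$ and $Y$.

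The crux — and the one place where the hypotheses are genuinely used — is the simultaneous factorization of $G\ra W$ and $V\ra W$ through a single $W_d$. This is precisely where I need the extended center $W$ to be exhibited as a $\lambda$-directed colimit of $\D$-subobjects (density of $\D$) and where I need both the test object $G$ and the current center $V$ to be $\lambda$-generated so that their hom-functors see that colimit; the latter is the reason the objects of $\D$ must be $\lambda$-generated. Everything else is sieve bookkeeping from Remark~\ref{max}(3) together with the routine checking of composite legs, and I expect the only mild subtlety to be keeping those leg identities straight — but these drop out directly from the compatibilities already provided by the \textsf{(back)} clause for $\cS$.
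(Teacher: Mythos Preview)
Your argument is correct and follows essentially the same route as the paper's proof. The paper does not invoke the maximal sieve $\cS$ of Remark~\ref{max}(3); it starts from an arbitrary $\lambda$-dense family $\{U_i\}$, writes each $U_i$ as a $\lambda$-directed colimit of subobjects $V_{ij}\in\D$, and takes the new family to be all the resulting $V_{ij}$. But the heart of both arguments is identical: given a center $V$ in $\D$ and a test mono $G\into X$, first use density of the original family to produce an enlarged center $W$ receiving both $G$ and $V$, then decompose $W$ as a $\lambda$-directed colimit of $\D$-subobjects and factor $G\to W$ and $V\to W$ through a common stage. Your sieve bookkeeping is just an alternative way of packaging the check that the resulting $\D$-span sits in the right family.

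One remark: you explicitly rely on objects of $\D$ being $\lambda$-generated (for the factorization of $V\to W$ through a stage of the colimit), and you flag this in your final paragraph. This hypothesis is not literally stated in the proposition, but the paper's own proof uses it in exactly the same place (``both $G\to U_{i'}$ and $V_{ij}\to U_i\to U_{i'}$ factorize through $V_{i'j'}$''), so you have correctly identified an implicit assumption rather than introduced a gap.
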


\begin{proof}
Let $X\linto U_i\into Y$, $i\in I$ be a $\lambda$-dense set of spans.  Write each $U_i$ as a $\lambda$-directed colimit of subobjects $V_{ij}$, $j\in J_i$, with $V_{ij}\in\D$.  Composing with the structure maps $X\linto U_i\into Y$, this gives a set $X\linto V_{ij}\into Y$, $i\in I$, $j\in J_i$ of spans.  We will show that this set is also $\lambda$-dense.  Given an $X\linto V_{ij}\into Y$ and $G\into X$ with $\lambda$-generated $G$, there exist, by the density of $\{U_i\;|\;i\in I\}$, an $i'\in I$ with morphisms $G\ra U_{i'}$ and $U_i\ra U_{i'}$ such that
\[ \xymatrix{ & U_{i'} \ar[ld]\ar[rd] \\
X & G\ar[l]\ar[u] & Y \\
& U_i \ar[lu]\ar[ru]\ar@/_5mm/[uu] } \]
commutes.  Since $U_{i'}$ is the $\lambda$-directed colimit of $V_{i'j}$, $j\in J_{i'}$, there exists $V_{i'j'}$, $j'\in J_{i'}$ such that both $G\to U_{i'}$ and $V_{ij}\to U_i\to U_{i'}$ factorize through $V_{i'j'}$.  The span $X\linto V_{i'j'}\into Y$ now verifies the back property of Def.~\ref{dense}.  The forth part is analogous.
\end{proof}

\begin{cor}  \label{lambda}
Let $\cA$ be a $\lambda$-mono-generated category.  Then $X\sim_\lambda Y$ if and only if there exists a $\lambda$-dense set of $\lambda$-spans between $X$ and $Y$.
\end{cor}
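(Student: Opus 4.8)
The plan is to reduce the statement to Proposition~\ref{densevert}. The ``if'' direction is immediate: a $\lambda$-dense set of $\lambda$-spans is, in particular, a $\lambda$-dense set of spans, so its existence witnesses $X\sim_\lambda Y$ by Definition~\ref{equi}. The content is therefore the ``only if'' direction, and I would obtain it simply by applying Proposition~\ref{densevert} with $\D$ taken to be the class of \emph{all} $\lambda$-generated objects of $\cA$.

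To invoke that proposition I must first check its hypothesis, namely that this class $\D$ is $\lambda$-mono-dense: every object should be a $\lambda$-directed colimit of subobjects isomorphic to $\lambda$-generated objects. This is almost the definition of $\lambda$-mono-generated (Definition~\ref{monogendef}), with one gap to fill. The definition supplies, for each object $Z$, a $\lambda$-directed diagram of monomorphisms between $\lambda$-generated objects whose colimit is $Z$; what Proposition~\ref{densevert} additionally demands is that the diagram objects sit inside $Z$ as genuine \emph{subobjects}, i.e.\ that the colimit cocone components are monic. This is exactly Lemma~\ref{monococone}$(i)$. Hence $\D$ is $\lambda$-mono-dense, and with $X\sim_\lambda Y$ assumed, Proposition~\ref{densevert} produces a $\lambda$-dense set of spans of the form $X\linto V\into Y$ with $V\in\D$, that is, with $V$ being $\lambda$-generated. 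By definition such a span is a $\lambda$-span, so this is precisely a $\lambda$-dense set of $\lambda$-spans, as required.

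I do not expect a serious obstacle here, since the corollary is essentially a packaging of Proposition~\ref{densevert}; the only point requiring care is the one flagged above, that the phrase ``can be written as a $\lambda$-directed colimit'' in Definition~\ref{monogendef} must be upgraded to ``is a $\lambda$-directed colimit of \emph{subobjects}'' by appeal to Lemma~\ref{monococone}$(i)$. One should also keep in mind that the proof of Proposition~\ref{densevert} internally re-expresses each center object $U_i$ of a given dense family as a $\lambda$-directed colimit of $\lambda$-generated subobjects; this step is legitimate precisely because $\cA$ is $\lambda$-mono-generated, so that every object, $U_i$ included, admits such a presentation and the relevant colimit exists, being $U_i$ itself.
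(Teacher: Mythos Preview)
Your argument is correct and matches the paper's intent: the corollary is stated without proof immediately after Proposition~\ref{densevert}, signalling that it follows by taking $\D$ to be the class of $\lambda$-generated objects, exactly as you do. Your explicit invocation of Lemma~\ref{monococone}$(i)$ to upgrade ``$\lambda$-directed diagram of monos'' to ``$\lambda$-directed colimit of subobjects'' is the one nontrivial check, and you handle it correctly.
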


\begin{prop}
Let $X\sim_\lambda Y$ in a $\lambda$-mono-generated category.  The following are equivalent:
\begin{itemize}
\item[(i)] the greatest $\lambda$-dense set of spans between $X$ and $Y$ contains all $\lambda$-spans between $X$ and $Y$
\item[(ii)] the set of all $\lambda$-spans between $X$ and $Y$ is $\lambda$-dense.
\end{itemize}
\end{prop}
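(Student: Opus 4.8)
The plan is to handle the two implications separately, with essentially all the work in (i)$\Rightarrow$(ii). Write $\Lambda$ for the set of all $\lambda$-spans between $X$ and $Y$, and let $\cS^\ast$ denote the greatest $\lambda$-dense set of spans between $X$ and $Y$, which exists by Remark~\ref{max}(3) because $X\sim_\lambda Y$. In this notation (i) says $\Lambda\subseteq\cS^\ast$ and (ii) says that $\Lambda$ is $\lambda$-dense. The implication (ii)$\Rightarrow$(i) is then immediate: any $\lambda$-dense set of spans is contained in the greatest one by the maximality in Remark~\ref{max}(3), so if $\Lambda$ is $\lambda$-dense then $\Lambda\subseteq\cS^\ast$.

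For (i)$\Rightarrow$(ii) I would verify the clauses of Def.~\ref{dense} for $\Lambda$. Non-emptiness is automatic: since $\cA$ is $\lambda$-mono-generated and $X\sim_\lambda Y$, Cor.~\ref{lambda} supplies a $\lambda$-dense, hence non-empty, set of $\lambda$-spans, so $\Lambda\neq\emptyset$. For \textsf{(back)}, fix a $\lambda$-span $X\linto U\into Y$ in $\Lambda$ and a monomorphism $G\into X$ with $G$ $\lambda$-generated. By (i) we have $U\in\cS^\ast$, and $\cS^\ast$ is $\lambda$-dense, so its \textsf{(back)} clause produces a span $X\linto W\into Y$ in $\cS^\ast$ together with maps $G\to W$ and $U\to W$ making the test diagram of Def.~\ref{dense} commute. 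The difficulty is that $W$ is an arbitrary span in $\cS^\ast$, so $W$ need not be $\lambda$-generated and hence need not itself lie in $\Lambda$.

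The crux, and the only place the hypothesis that $\cA$ is $\lambda$-mono-generated enters, is to shrink $W$ to a $\lambda$-generated span. Write $W$ as a $\lambda$-directed colimit $W=\colim_k V_k$ of $\lambda$-generated subobjects; by Lemma~\ref{monococone}(i) each cocone component $V_k\into W$ is mono. Since $G$ and $U$ are $\lambda$-generated and the diagram is $\lambda$-directed, the two maps $G\to W$ and $U\to W$ factor through a common $V_k\into W$. The legs $V_k\into W\into X$ and $V_k\into W\into Y$ are composites of monos, so $X\linto V_k\into Y$ is a genuine span, and as $V_k$ is $\lambda$-generated it is a $\lambda$-span, i.e.\ $V_k\in\Lambda$. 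Composing the factorizations with $V_k\into W$ and reading off the already-commuting diagram for $W$, the maps $G\to V_k$ and $U\to V_k$ witness \textsf{(back)} for $\Lambda$ with $V_k$ in the role of $U_j$; the \textsf{(forth)} clause is the same argument with the roles of $X$ and $Y$ interchanged. Thus $\Lambda$ is $\lambda$-dense.

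I expect the shrinking step to be the main obstacle: one must arrange the factorizations of $G\to W$ and $U\to W$ through a single $V_k$ compatibly with the colimit cocone, and then check that each triangle of the diagram for $W$ restricts correctly along $V_k\into W$ (for instance that $G\to V_k\into W\into X$ still equals the given $G\into X$). These are routine consequences of $\lambda$-directedness and of $G,U$ being $\lambda$-generated, but they are exactly what turns the mere containment $U\in\cS^\ast$ into density of the \emph{whole} family $\Lambda$.
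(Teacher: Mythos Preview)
Your proof is correct and follows essentially the same route as the paper. The paper dispatches (ii)$\Rightarrow$(i) as a tautology and handles (i)$\Rightarrow$(ii) by saying ``the proof of Prop.~\ref{densevert}, applied to the greatest $\lambda$-dense set of spans between $X$ and $Y$''; your shrinking step (writing $W$ as a $\lambda$-directed colimit of $\lambda$-generated $V_k$ and factoring $G\to W$, $U\to W$ through a common $V_k$) is exactly that argument spelled out in full, so there is no substantive difference.
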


\noindent
(ii)$\dra$(i)\ is a tautology, while the proof of Prop~\ref{densevert}, applied to the greatest $\lambda$-dense set of spans between $X$ and $Y$, shows (i)$\dra$(ii). \qed

Given a set $\cS_{XY}$ of $\lambda$-spans between $X$ and $Y$, and a set $\cS_{YZ}$ of $\lambda$-spans between $Y$ and $Z$, let $\cS_{XY}\star\cS_{YZ}$ denote the set of $\lambda$-spans $X\linto U\into Z$ that can be factored through a commutative diagram of the form
\begin{equation}   \tag{$\dag$}
\xymatrixcolsep{1pc}\xymatrixrowsep{1pc}
\xymatrix{ X && Y && Z \\
& S\ar[ul]\ar[ur] && T\ar[ul]\ar[ur] \\
&& U \ar[ul]\ar[ur] }
\end{equation}
with $X\linto S\into Y\in\cS_{XY}$ and $Y\linto T\into Z\in\cS_{YZ}$.

\begin{prop}  \label{compose}
If $\cS_{XY}$ and $\cS_{YZ}$ are $\lambda$-dense sets of $\lambda$-spans, then $\cS_{XY}\star\cS_{YZ}$ is a $\lambda$-dense set of $\lambda$-spans too.
\end{prop}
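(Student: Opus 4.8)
The plan is to check the three requirements of Def.~\ref{dense} for the set $\cS_{XY}\star\cS_{YZ}$ in turn: non-emptiness, \textsf{(back)}, and \textsf{(forth)}. Since the whole situation is symmetric under interchanging $X$ with $Z$ and $\cS_{XY}$ with $\cS_{YZ}$, the \textsf{(forth)} clause will follow from the \textsf{(back)} argument with the roles reversed, and non-emptiness will drop out as a degenerate case of the same construction. So I would concentrate almost entirely on \textsf{(back)}.

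For \textsf{(back)}, suppose I am given an element $X\linto U\into Z$ of $\cS_{XY}\star\cS_{YZ}$, witnessed by a diagram $(\dag)$ through some $S\in\cS_{XY}$ and $T\in\cS_{YZ}$, together with a monomorphism $G\into X$ with $G$ $\lambda$-generated. First I would feed $G\into X$ into the \textsf{(back)} property of $\cS_{XY}$ at the base point $S$, producing $S'\in\cS_{XY}$ together with maps $G\to S'$ and $S\to S'$ compatible over $X$ and $Y$. The leg $S'\into Y$ is again a $\lambda$-generated subobject, so I would next feed it into the \textsf{(back)} property of $\cS_{YZ}$ at the base point $T$, producing $T'\in\cS_{YZ}$ with $S'\to T'$ and $T\to T'$, where $S'\to T'\to Y$ is the inclusion $S'\into Y$. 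The claim is then that $S'$ itself, equipped with the identity map to $S'$ and the map $S'\to T'$, serves as the new center $U'$: its two legs are $S'\to X$ and $S'\to T'\to Z$, the two routes to $Y$ agree by construction, and $S'$ is $\lambda$-generated as the center of a span in $\cS_{XY}$, so $X\linto S'\into Z$ is a genuine $\lambda$-span lying in $\cS_{XY}\star\cS_{YZ}$. (All the relevant legs are mono by left-cancellation: $S'\to T'$ is mono because $S'\to T'\to Y$ equals the mono $S'\into Y$, while $S'\to X$ and $T'\to Z$ are legs of given spans.) The connecting maps are $G\to S'$ and the composite $U\to S\to S'$.

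Most of the verification of the \textsf{(back)} diagram is immediate: the triangle over $X$ commutes since $G\to S'\to X=G\into X$ and $U\to S\to S'\to X=U\to S\to X$, the original left leg. The one genuinely non-trivial point --- and the step I expect to be the main obstacle --- is the compatibility of the \emph{right} legs, i.e.\ that $U\to S\to S'\to T'\to Z$ equals the original right leg $U\to T\to Z$. I would prove this by cancelling monomorphisms twice. Since $T'\into Z$ is mono it suffices to show the two maps into $T'$ agree, namely $U\to S\to S'\to T'=U\to T\to T'$; and since $T'\to Y$ is mono it suffices in turn to show their postcompositions to $Y$ agree. But both sides reduce, using $S'\to T'\to Y=S'\into Y$, $S\to S'\to Y=S\to Y$, and $T\to T'\to Y=T\to Y$, to the single equation $U\to S\to Y=U\to T\to Y$, which is precisely the commutativity of the original diagram $(\dag)$.

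Finally, non-emptiness falls out of the same machine: choosing any $S\in\cS_{XY}$ (the set is non-empty) and applying the \textsf{(back)} property of $\cS_{YZ}$ to the monomorphism $S\into Y$ yields $T'\in\cS_{YZ}$ with $S\to T'$ over $Y$, whence $X\linto S\into Z$ with right leg $S\to T'\to Z$ is a bona fide $\lambda$-span in $\cS_{XY}\star\cS_{YZ}$. The \textsf{(forth)} clause is proved by the mirror-image construction: given a $\lambda$-generated $G\into Z$, extend first in $\cS_{YZ}$ along $G$, then in $\cS_{XY}$ along the resulting $\lambda$-generated subobject of $Y$, and take the new center to be that second object; the leg-compatibility is settled by the identical double mono-cancellation, now with the roles of $X$ and $Z$ swapped.
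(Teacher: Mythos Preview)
Your proof is correct and follows essentially the same route as the paper's: apply density of $\cS_{XY}$ to $G\into X$ to obtain $S'$, then density of $\cS_{YZ}$ to $S'\into Y$ to obtain $T'$, take $S'$ as the new center, and verify the right-leg compatibility by cancelling the mono $T'\into Y$. The paper presents the mono-cancellation as a single step (showing $U\to T\to T_0=U\to S\to S_0\to T_0$ by postcomposing with $T_0\into Y$) and leaves the remaining commutativities as routine, whereas you unpack the reduction through $T'\into Z$ first; this is a matter of exposition, not substance.
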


\begin{proof}
First of all, $\cS_{XZ}$ is non-empty.  Indeed, given an arbitrary
\[ \xymatrixcolsep{1pc}\xymatrixrowsep{1pc}
\xymatrix{ X && Y && Z \\
& S\ar[ul]\ar[ur] && T\ar[ul]\ar[ur] } \]
with $X\linto S\into Y\in\cS_{XY}$, $Y\linto T\into Z\in\cS_{YZ}$, by density one can find $Y\linto T_0\into Z\in\cS_{YZ}$ and arrow $S\ra T_0$ making
\[ \xymatrixcolsep{1pc}\xymatrixrowsep{1pc}
\xymatrix{ X && Y & T_0\ar[l]\ar[r] & Z \\
& S\ar[ul]\ar[ur]\ar[rru]|!{[ur];[rr]}\hole && T\ar[ul]\ar[ur]\ar[u] } \]
commutative.  But that means
\[ \xymatrixcolsep{1pc}\xymatrixrowsep{1pc}
\xymatrix{ X && Y && Z \\
& S\ar[ul]\ar[ur] && T_0\ar[ul]\ar[ur] \\
&& S \ar@{=}[ul]\ar[ur] } \]
belongs to $\cS_{XZ}$.

Now consider ($\dag$)\ and suppose $X\linto G$ with $\lambda$-generated $G$ is given.  Apply density to $X\linto S\into Y$ and $X\la G$ to find appropriate $X\linto S_0\into Y\in\cS_{XY}$ with $S\ra S_0$ and $G\ra S_0$, then apply density to $Y\linto T\into Z$ and $S_0\ra Y$ to find appropriate $Y\linto T_0\into Z\in\cS_{YZ}$ and $S_0\ra T_0$ in
\[ \xymatrix{ X & S_0\ar[l]\ar[r]\ar@/^5mm/[rr] & Y & T_0\ar[l]\ar[r] & Z \\
G \ar[u]\ar[ru]|!{[u];[r]}\hole & S\ar[ul]\ar[ur]\ar[u] && T\ar[ul]\ar[ur]\ar[u] \\
&& U \ar[ul]\ar[ur] } \]
The composite $U\ra T\ra T_0\ra Y$ equals the composite $U\ra S\ra S_0\ra T_0\ra Y$.  Since $T_0\into Y$ is mono, the composite $U\ra T\ra T_0$ equals the composite $U\ra S\ra S_0\ra T_0$.  That implies that $X\linto S_0\into Z$, which belongs to $\cS_{XZ}$ thanks to
\[ \xymatrixcolsep{1pc}\xymatrixrowsep{1pc}
\xymatrix{ X && Y && Z \\
& S_0\ar[ul]\ar[ur] && T_0\ar[ul]\ar[ur] \\
&& S_0 \ar@{=}[ul]\ar[ur] } \]
verifies the ``back'' part of the density condition for $X\linto U\into Z$ and $X\linto G$, via the connecting map $U\ra S\ra S_0$.  (The remaining commutativities are easy to check, and do not require that arrows be mono.)

The ``forth'' case is symmetric.
\end{proof}

\begin{rem}
On $\lambda$-dense sets of $\lambda$-spans, the operation $\star$ is associative, that is,
\[   \big(\cS_{XY}\star\cS_{YZ}\big)\star\cS_{ZW}=\cS_{XY}\star\big(\cS_{YZ}\star\cS_{ZW}\big)  \]
but we will not need this fact.
\end{rem}

\begin{cor}  \label{mono-eq}
Let $\cA$ be a $\lambda$-mono-generated category.  Then the relation $\sim_\lambda$ is transitive.
\end{cor}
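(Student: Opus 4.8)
The plan is to combine the two facts that have been assembled precisely for this purpose: Cor.~\ref{lambda}, which characterizes $\sim_\lambda$ in a $\lambda$-mono-generated category via $\lambda$-dense sets of \emph{$\lambda$-spans} (rather than arbitrary spans), and Prop.~\ref{compose}, which shows that the composition operation $\star$ sends a pair of $\lambda$-dense sets of $\lambda$-spans to another such set. Transitivity should then drop out by a single application of $\star$, with essentially no further work.

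Concretely, suppose $X\sim_\lambda Y$ and $Y\sim_\lambda Z$ in $\cA$. First I would invoke $\lambda$-mono-generatedness through Cor.~\ref{lambda}: it upgrades each hypothesis from ``there is a $\lambda$-dense set of spans'' (the bare content of Def.~\ref{equi}) to ``there is a $\lambda$-dense set of $\lambda$-spans.'' This step is the crucial one, and it is exactly why the hypothesis of $\lambda$-mono-generatedness cannot be dropped. Write $\cS_{XY}$ for a $\lambda$-dense set of $\lambda$-spans between $X$ and $Y$, and $\cS_{YZ}$ for one between $Y$ and $Z$.

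Next I would form $\cS_{XY}\star\cS_{YZ}$ and apply Prop.~\ref{compose}, which guarantees that this is again a $\lambda$-dense set of $\lambda$-spans, now between $X$ and $Z$. In particular it is a (nonempty) $\lambda$-dense set of spans between $X$ and $Z$, so Def.~\ref{equi} yields $X\sim_\lambda Z$, completing the proof. No associativity of $\star$ is needed, only that a single composite exists and is $\lambda$-dense.

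I do not expect a genuine obstacle here, since the substantive combinatorics have been discharged in Prop.~\ref{compose} and Prop.~\ref{densevert}/Cor.~\ref{lambda}. The only point demanding care is bookkeeping about span types: the operation $\star$ is defined solely on sets of $\lambda$-spans and returns $\lambda$-spans, so one must first pass to $\lambda$-spans before composing, and this is precisely the role played by Cor.~\ref{lambda}. Once that reduction is in place, the corollary is immediate.
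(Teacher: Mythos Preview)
Your proposal is correct and matches the paper's own proof essentially verbatim: invoke Cor.~\ref{lambda} to pass to $\lambda$-dense sets of $\lambda$-spans, then apply Prop.~\ref{compose} to conclude that their $\star$-composite witnesses $X\sim_\lambda Z$. Your remarks on why the reduction to $\lambda$-spans is needed and why associativity is irrelevant are also on point.
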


Indeed, by Cor.~\ref{lambda}, if $\lambda$-dense sets of spans exist between $X$ and $Y$, and $Y$ and $Z$, then $\lambda$-dense sets of $\lambda$-spans exist as well, and their $\star$-composite verifies that $X\sim_\lambda Z$.  \qed

\begin{prop}   \label{simiso}
Let $\cA$ be a $\lambda$-mono-generated category.  Suppose $X$ is a $\lambda$-generated object and $X\sim_\lambda Y$.  Then $X$ and $Y$ are isomorphic.
\end{prop}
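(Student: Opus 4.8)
The plan is to produce a single monomorphism $f\colon X\into Y$ and then assemble its inverse from the colimit presentation of $Y$. First I would invoke Cor.~\ref{lambda} to replace the given witnessing family by a $\lambda$-dense set of $\lambda$-spans between $X$ and $Y$ (this is convenient but not essential). The key opening move is to feed the identity $\id_X\colon X\linto X$ into the \textsf{(back)} clause of Def.~\ref{dense}: since $X$ is $\lambda$-generated this is a legitimate test mono, and applied to any span in the dense family it returns a span $X\xla{u}U\xra{v}Y$ together with a map $s\colon X\to U$ satisfying $us=\id_X$. Thus the left leg $u$ is a split epimorphism; being also a monomorphism (as the leg of a span), it is an isomorphism. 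Setting $f:=vu^{-1}$ then gives a monomorphism $X\into Y$.

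Next I would use $\lambda$-mono-generatedness to write $Y=\colim_d G_d$ as a $\lambda$-directed colimit of $\lambda$-generated subobjects $\iota_d\colon G_d\into Y$, and aim to show that each $\iota_d$ factors through $f$. To this end I apply the \textsf{(forth)} clause to the span $X\xla{u}U\xra{v}Y$ and the test mono $\iota_d$ (legitimate since $G_d$ is $\lambda$-generated). This yields a span $X\xla{u_d}U_d\xra{v_d}Y$, a span morphism $\phi_d\colon U\to U_d$ with $u_d\phi_d=u$ and $v_d\phi_d=v$, and a map $\psi_d\colon G_d\to U_d$ with $v_d\psi_d=\iota_d$. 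The decisive point is that $\phi_d$ is forced to be invertible: from $u_d\phi_d=u$ with $u$ an isomorphism, $u_d$ is a split epi, hence (being mono) an isomorphism, and then $\phi_d=u_d^{-1}u$ is an isomorphism. Consequently $v_d=v\phi_d^{-1}$ and $\iota_d=v_d\psi_d=v\phi_d^{-1}\psi_d=f\,(u\phi_d^{-1}\psi_d)$, so $\iota_d$ factors through $f$ via $h_d:=u\phi_d^{-1}\psi_d\colon G_d\to X$.

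Finally I would glue the $h_d$ into a retraction. Because $f$ is mono and $fh_d=\iota_d$, compatibility $h_{d'}g=h_d$ along any transition $g\colon G_d\to G_{d'}$ follows by cancelling $f$ from $fh_{d'}g=\iota_{d'}g=\iota_d=fh_d$. Hence $(h_d)_d$ is a cocone on the diagram, inducing a unique $h\colon Y\to X$ with $h\iota_d=h_d$. Then $fh\iota_d=fh_d=\iota_d$ for every $d$, so $fh=\id_Y$ since the colimit cocone is jointly epic; and $f(hf)=(fh)f=f$ together with $f$ mono forces $hf=\id_X$. Therefore $f$ is an isomorphism and $X\cong Y$.

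I expect the main obstacle to be conceptual rather than computational: one must resist assuming that $Y$ is itself $\lambda$-generated (it is so a posteriori, but that is not available at the start), and instead exploit the $\lambda$-directed colimit presentation of $Y$ to localize \textsf{(forth)} to the $\lambda$-generated pieces $G_d$. The one structural fact that makes the gluing go through is that the connecting map $\phi_d$ between spans is automatically an isomorphism once both left legs are isomorphisms; this is precisely what upgrades the mere existence of a factoring span into an honest factorization of $\iota_d$ through $f$, and hence into a well-defined inverse on the colimit.
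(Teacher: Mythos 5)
Your proposal is correct and follows essentially the same route as the paper: both start by feeding $\id_X$ into the \textsf{(back)} clause to make the left leg of a span a split epi (hence an isomorphism), and then use \textsf{(forth)} on the $\lambda$-generated subobjects of $Y$ to show the resulting mono $X\into Y$ is cofinal among them. The only difference is that you spell out the final step --- gluing the factorizations $h_d$ into an inverse via the colimit presentation of $Y$ --- which the paper compresses into the assertion that a mono through which every $\lambda$-generated subobject of $Y$ factors must be an isomorphism.
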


\begin{proof}
Since $X$ itself is $\lambda$-generated, applying the ``back'' direction of density to the identity map $X\la X$, one constructs a span
\[ \xymatrix{  & U \ar[dl]_x \ar[dr] \\
X \ar@{=}[r] & X\ar[u]_u & Y  }  \]
such that $xu=\id_X$. Thus $x$ is both a monomorphism and a split epimorphism, hence it is an isomorphism.  Let $G\into Y$ be any monomorphism with $\lambda$-generated $G$; density implies the existence of
\[ \xymatrix{ & V \ar[ld]_v\ar[rd] \\
X & G\ar[r]\ar[u] & Y \\
& U \ar[lu]^x\ar[ru]_y\ar@/^5mm/[uu]^i } \]
Again, $v$ is both a mono and a split epi, so an isomorphism; thus so is $i$.  This means that $y:U\into Y$ is a mono such that any mono $G\into Y$ with $\lambda$-generated $G$ factors through $y$.  Since $\cA$ is a $\lambda$-mono-generated category, this implies that $y$ is an isomorphism too.
\end{proof}

\begin{cor}  \label{isochar}
Let $X$, $Y$ be objects of a $\mu$-mono-generated category $\cA$ that has colimits of $\mu$-directed diagrams of monos.  Then $X$ and $Y$ are isomorphic if and only if there exist arbitrarily large regular cardinals $\lambda$ such that $X\sim_\lambda Y$; equivalently: if and only if $X\sim_\lambda Y$ for all regular cardinals $\lambda$.
\end{cor}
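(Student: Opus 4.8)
The plan is to reduce the non-obvious implication to Prop.~\ref{simiso} by \emph{raising the index} of mono-generation until $X$ itself becomes $\lambda$-generated, at which point $\lambda$-equivalence is forced to be isomorphism. I would first dispose of the formal parts. If $X$ and $Y$ are isomorphic then $X\sim_\lambda Y$ for every regular $\lambda$ by Rem.~\ref{max}(1), so both formulations hold. The equivalence of ``$X\sim_\lambda Y$ for arbitrarily large $\lambda$'' with ``$X\sim_\lambda Y$ for all $\lambda$'' is immediate from the monotonicity in Rem.~\ref{max}(1): since $X\sim_\kappa Y$ entails $X\sim_\lambda Y$ whenever $\lambda<\kappa$, any unbounded class of cardinals carrying $\sim_\lambda$ already carries it for all $\lambda$. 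This monotonicity is what makes the whole argument frictionless, since it lets me enlarge $\lambda$ as much as I please while retaining $X\sim_\lambda Y$.

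It remains to prove that $X\sim_\lambda Y$ for arbitrarily large $\lambda$ implies $X\cong Y$. I would invoke $\mu$-mono-generation to write $X=\colim D$ for a $\mu$-directed diagram $D:\D\ra\cA$ valued in monomorphisms with $\mu$-generated vertices, and let $\theta$ be the cardinality of the set of morphisms of $\D$. Now choose a regular cardinal $\lambda$ meeting three demands at once: $\mu\triangleleft\lambda$, $\theta<\lambda$, and $X\sim_\lambda Y$. Such $\lambda$ exists because the cardinals with $\mu\triangleleft\lambda$ are cofinal in the cardinals (Makkai--Par\'{e}~\cite{MP}~2.3), so one can be taken above $\theta$; and by the previous paragraph $X\sim_\lambda Y$ then holds automatically for it.

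For this $\lambda$ two facts fall out. First, since $\mu\triangleleft\lambda$ and $\cA$ has $\mu$-directed colimits of monos, Prop.~\ref{raise} shows $\cA$ is $\lambda$-mono-generated. Second, $X$ is the colimit of a diagram of size $\theta<\lambda$ consisting of monomorphisms and $\mu$-generated objects with $\mu\triangleleft\lambda$, so $X$ is $\lambda$-generated by exactly the generation estimate used in the proof of Prop.~\ref{raise} (the analogue of \cite{AR}~1.16). Thus $X$ is a $\lambda$-generated object of the $\lambda$-mono-generated category $\cA$ with $X\sim_\lambda Y$, and Prop.~\ref{simiso} delivers $X\cong Y$.

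The step I expect to be the crux is producing a single $\lambda$ that simultaneously makes $X$ $\lambda$-generated and keeps $X\sim_\lambda Y$: one must pin down a presentation of $X$ of a definite size $\theta$ and confirm that the ``$\lambda$-small colimit of $\lambda$-generated objects'' lemma still applies in a setting where only $\mu$-directed colimits of monos are assumed to exist. The cofinality of $\{\lambda:\mu\triangleleft\lambda\}$ together with the monotonicity of $\sim_\lambda$ is precisely what dissolves the apparent tension between ``$\lambda$ large enough to shrink $X$ to a generator'' and ``$\lambda$ small enough that $X\sim_\lambda Y$ still holds.''
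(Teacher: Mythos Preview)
Your proof is correct and follows essentially the same route as the paper's: write $X$ as a $\mu$-directed colimit of $\mu$-generated subobjects of some bounded size, pick a regular $\lambda$ with $\mu\triangleleft\lambda$ and $\lambda$ larger than that size, invoke Prop.~\ref{raise} to make $\cA$ $\lambda$-mono-generated, observe that $X$ is then $\lambda$-generated, and finish with Prop.~\ref{simiso}. You are, if anything, slightly more explicit than the paper in handling the equivalence of ``arbitrarily large'' versus ``all'' via the monotonicity in Rem.~\ref{max}(1), and in noting that $X\sim_\lambda Y$ automatically persists to your chosen $\lambda$.
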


Only the `if' direction is non-trivial.  Write $X$ as a $\mu$-directed colimit of $\mu$-generated subobjects; if this colimit has size $\kappa$, then $X$ is $\max\{\mu,\kappa^+\}$-generated.  By \cite{MP}~2.3 or \cite{AR}~2.11, there exists a regular cardinal $\lambda$ such that $\mu\triangleleft\lambda$ and $\kappa<\lambda$.  By Prop.~\ref{raise}, $\cA$ is $\lambda$-mono-generated, and $X$ is $\lambda$-generated.  Prop.~\ref{simiso} now applies.

The next proposition, when the domain and codomain of the functor $F$ are categories of structures, specializes to the main result of Feferman~\cite{F}; see also Thm.~\ref{karp}.

\begin{prop}  \label{pres}
Let $\cA$ be a $\lambda$-mono-generated category and $F:\cA\ra\B$ a functor preserving monomorphisms and $\lambda$-directed colimits of monomorphisms.  If $X\sim_\lambda Y$ then $F(X)\sim_\lambda F(Y)$.
\end{prop}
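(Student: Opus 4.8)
The plan is to transport a $\lambda$-dense set of spans across $F$ and verify that its image is again $\lambda$-dense. By Cor.~\ref{lambda} we may choose a $\lambda$-dense set of spans $X\linto U_i\into Y$, $i\in I$, between $X$ and $Y$ in $\cA$ (in fact the argument below will not even use that the $U_i$ are $\lambda$-generated). Applying $F$ and using that $F$ preserves monomorphisms, we obtain a nonempty family of spans $F(X)\linto F(U_i)\into F(Y)$, $i\in I$, in $\B$. The claim is that this family is $\lambda$-dense; granting it, Def.~\ref{equi} yields $F(X)\sim_\lambda F(Y)$. Note that $\B$ need not itself be mono-generated, since the back-and-forth conditions of Def.~\ref{dense} only ever refer to $\lambda$-generated test objects inside $\B$.

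The device that makes those conditions checkable is the approximation of $X$ and $Y$ by $\lambda$-generated subobjects. Since $\cA$ is $\lambda$-mono-generated, write $X=\colim_k X_k$ as the colimit of a $\lambda$-directed diagram of monomorphisms with each $X_k$ $\lambda$-generated; by Lemma~\ref{monococone}$(i)$ the cocone maps $X_k\into X$ are mono. Because $F$ preserves monomorphisms and preserves this particular $\lambda$-directed colimit of monos, $F(X)=\colim_k F(X_k)$ is again a $\lambda$-directed colimit of monomorphisms, and Lemma~\ref{monococone}$(i)$ makes the induced maps $F(X_k)\into F(X)$ mono. Fix an analogous presentation $Y=\colim_l Y_l$.

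To verify the back condition, suppose we are given a span $F(X)\linto F(U_i)\into F(Y)$ and a monomorphism $H\into F(X)$ with $H$ a $\lambda$-generated object of $\B$. Since $H$ is $\lambda$-generated and $F(X)=\colim_k F(X_k)$ is a $\lambda$-directed colimit of monos, the map $H\into F(X)$ factors as $H\to F(X_{k_0})\into F(X)$ for some $k_0$. Now apply the back property of the original $\lambda$-dense family to the span $U_i$ and to the $\lambda$-generated subobject $X_{k_0}\into X$: this produces $j\in I$ together with maps $X_{k_0}\to U_j$ and $U_i\to U_j$ fitting into the commuting diagram of Def.~\ref{dense}. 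Applying $F$ and precomposing with $H\to F(X_{k_0})$ yields maps $H\to F(U_j)$ and $F(U_i)\to F(U_j)$; functoriality of $F$ turns the three triangle identities into exactly the commutativities required for back in $\B$ (in particular, $H\to F(U_j)\to F(X)$ equals the given monomorphism $H\into F(X)$, because it composes to $H\to F(X_{k_0})\into F(X)$). The forth condition is verified in the same way, with $Y=\colim_l Y_l$ in place of $X=\colim_k X_k$, and nonemptiness of the image family is inherited from $I\neq\emptyset$.

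The single step carrying the weight — and the one using both hypotheses on $F$ — is the factorization $H\to F(X_{k_0})$: it is precisely what lets the $\lambda$-generatedness of an object $H$ living in $\B$ be pulled back to a $\lambda$-generated subobject of $X$ living in $\cA$, where the original density condition can be invoked. Everything else is bookkeeping: $F$ being a functor propagates the commuting diagrams, preservation of monos keeps spans as spans and (through Lemma~\ref{monococone}) keeps the approximating cocones mono. I would expect no genuine obstacle beyond ensuring that the colimit $X=\colim_k X_k$ really is of the kind $F$ is assumed to preserve, which is guaranteed by Def.~\ref{monogendef} together with the assumption that $F$ preserves those $\lambda$-directed colimits of monos that exist in $\cA$.
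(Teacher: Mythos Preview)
Your proof is correct and follows essentially the same route as the paper's: push the family of spans through $F$, approximate $X$ (and $Y$) by a $\lambda$-directed diagram of $\lambda$-generated subobjects, factor the $\lambda$-generated test object $H$ through some $F(X_{k_0})$, and then invoke density in $\cA$ against $X_{k_0}\into X$. One small wrinkle: you invoke Lemma~\ref{monococone}$(i)$ in $\B$ to conclude that the cocone maps $F(X_k)\to F(X)$ are mono, but that lemma is stated for $\lambda$-mono-generated categories and $\B$ carries no such hypothesis. This is harmless, since the cocone maps are $F$ applied to the monos $X_k\into X$ and hence mono because $F$ preserves monos; moreover, their being mono is not actually used in the remainder of the argument (the factorization of $H$ only needs that $F(X)=\colim_k F(X_k)$ is a $\lambda$-directed colimit of a diagram of monos, which follows from preservation of monos and of the colimit).
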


\begin{proof}
Let $X\la U_i\ra Y$, $i\in I$, be a $\lambda$-dense set of spans between $X$ and $Y$.  Then $F(X)\la F(U_i)\ra F(Y)$, $i\in I$ is a set of spans between $F(X)$ and $F(Y)$ and the claim is that this set is $\lambda$-dense.  Indeed, let some $F(X)\la F(U_i)\ra F(Y)$ and $F(X)\linto G$, with $\lambda$-generated $G$, be given.  Write $X$ as a $\lambda$-directed colimit of $\lambda$-generated objects $X_\alpha$ and monomorphisms; thence $F(X)$ is the $\lambda$-directed colimit of the $F(X_\alpha)$ along monomorphisms.  Find $\alpha$ such that $F(X)\linto G$ factors as $F(X)\la F(X_\alpha)\la G$.  Apply density to $X\la U_i\ra Y$ and $X\la X_\alpha$ to find appropriate $X\la U_j\ra Y$, $U_i\ra U_j$ and $X_\alpha\ra U_j$.  The diagram
\[ \xymatrix{  & F(U_j) \ar[dl]\ar[dr] \\
F(X) & F(X_\alpha)\ar[l]\ar[u] & F(Y) \\
& G \ar[u]\ar[ul] \\
& F(U_i) \ar[uul]\ar[uur]\ar@/_9mm/[uuu]
} \]
verifies the ``back'' case of the $\lambda$-density of $F(X)\la F(U_i)\ra F(Y)$, $i\in I$.  The ``forth'' case is symmetric.
\end{proof}

The usefulness of the previous proposition is limited by the need for $F:\cA\ra\B$ to preserve monos.  A number of functors appearing in e.g.\ commutative algebra and algebraic geometry need not preserve monos, although the other assumptions of Prop.~\ref{pres} are commonly satisfied.  So the next variant, motivated by work of Eklof~\cite{E1}, is especially welcome.  In it, the hypothesis that $F$ preserves monos is dropped; however, one needs to assume more structure on the target category $\B$.  Our discussion will follow Makkai-Par\'{e}~\cite{MP}~3.2.

Let $\lambda$ be a regular cardinal and $\Sigma$ a language for the logic $\LL_{\infty\kappa}$, i.e.\ possibly many-sorted, with relation and function symbols of arity less than $\lambda$.  Homomorphisms of $\Sigma$-structures are required to preserve the interpretations of function symbols and the existing relations.  Embeddings are injective homomorphisms $A\ra B$ that preserve and reflect relations, inducing an isomorphism of $A$ with the substructure of $B$ on the set-theoretical image of $A$.  We denote by $\str(\Sigma)$ the category of $\Sigma$-structures and homomorphisms.  Positive-existential formulas are ones built from atomic formulas with an arbitrary use of $\bigwedge$, $\bigvee$ and $\exists$ (but no other connectives or quantifiers).  A \textsl{basic sentence} of $\LL_{\infty\kappa}$ is a formula of the form
\[  \forall\xx(\phi\implies\psi)  \]
with no free variables, where $\phi$ and $\psi$ are positive-existential.  A \textsl{basic theory} $T$ is one axiomatized by a set of basic sentences.

Let $T$ be a basic theory, and $\Mod(T)$ the category whose objects are models of $T$ and whose morphisms are homomorphisms of $\Sigma$-structures.  Let $\lambda\geq\kappa$ be a regular cardinal that exceeds the size $|I|$ of any conjunction $\bigwedge_{i\in I}\alpha_i$ that occurs in a sentence belonging to $T$.  Then the inclusion
\[  \Mod(T) \inc \str(\Sigma)  \]
creates $\lambda$-filtered colimits.  $\Mod(T)$ will in fact be an accessible category, a fortiori $\mu$-mono-generated for some $\mu\geq\lambda$.

The target categories of interest to us will be the categories of models of basic \textsl{universal} theories.

\begin{defn}
$T$ is a \textsl{basic universal theory} in $\LL_{\infty\kappa}$ if it is axiomatized by a set of sentences of the form
\[  \forall\xx(\phi\implies\psi)  \]
where $\phi$ and $\psi$ are built from atomic formulas with an arbitrary use of $\bigwedge$ and $\bigvee$ (but no quantifiers or other connectives).

Let $\lambda\geq\kappa$ be a regular cardinal.  $\forall_{\lambda\kappa}$ will denote the class of basic universal theories in $\LL_{\infty\kappa}$ where the use of conjunction $\bigwedge_{i\in I}$ is permitted only for $|I|<\lambda$.
\end{defn}

\begin{exam}
$\bullet$ In the above definitions, the empty conjunction is understood as the logical constant `True' $\top$ and the empty disjunction as the logical constant `False' $\bot$.  Thus universal Horn theories, axiomatized by sentences of the form
\[  \forall\xx\big(\bigwedge_{i\in I}\alpha_i \implies \beta  \big) \]
\[  \forall\xx\big(\bigwedge_{i\in I}\alpha_i \implies \bot  \big) \]
with atomic $\alpha_i$, $\beta$, are basic universal, and belong to $\forall_{\lambda\kappa}$ if $|I|<\lambda$ in all the axioms.  Specializing further, quasi-varieties of algebras, axiomatized by implications between non-empty conjunctions of terms in a language with no relation symbols, are basic universal.  See Rosick\'{y}~\cite{R} for an intrinsic categorical characterization of categories of the form $\Mod(T)$ where $T$ is a universal Horn theory resp.\ quasi-variety.

$\bullet$ The theory of torsion groups, axiomatized as groups together with
\[  \forall x\big(\bigvee_{n\in\NN} x^n = \mathbf{1}  \big)  \]
is $\forall_{\omega\omega}$.  Since this category does not have all products, it is not the category of models and homomorphisms of any universal Horn theory.

$\bullet$ If $T$ is a basic theory in $\LL_{\omega\omega}$, its Skolemization (as constructed below)\ is $\forall_{\omega\omega}$.  Indeed, let $\forall\xx(\phi\implies\psi)$ be a basic sentence, where $\phi$, $\psi$ have prenex form $\exists\uu\phi_0$, $\exists\vv\psi_0$.  Without loss of generality, $\uu$ does not occur in $\psi_0$ and $\vv$ does not occur in $\phi_0$.  Then
\begin{multline*}
\forall\xx(\phi\implies\psi) \biimp \forall\xx(\exists\uu\phi_0\implies\exists\vv\psi_0) \biimp \\
\biimp \forall\xx\forall\uu\exists\vv(\phi_0\implies\psi_0) \biimp \forall\xx\forall\uu(\phi_0\implies\psi_1)
\end{multline*}
where $\psi_1$ is the result of replacing $\vv$ by Skolem functions.  The prenex form of a positive-existential formula of $\LL_{\lambda\kappa}$ will belong to $\LL_{\lambda\mu}$ for $\mu\geq\kappa$; modulo that, the argument works for infinitary logics.  (Note that formulas may fail to have prenex forms within $\LL_{\lambda\kappa}$.)  However, Skolemization enlarges the language, and thus changes (restricts)\ the notion of morphism of models.
\end{exam}

Let $T$ be a basic universal theory in the language $\Sigma$ and $f:X\ra Y$ a morphism of $T$-models.  Let $U$ be the set-theoretic image of $f$ in the set(s)\ underlying $Y$.  Equip $U$ with the $\Sigma$-structure induced from $Y$.  Since $T$ is a theory axiomatized by universal sentences, $U$ will be a $T$-model too, and $X\epi U\into Y$ morphisms of $T$-models.  We will refer to this as the \textsl{image factorization} of $f$.

\begin{lemma}  \label{factor}
Let $T\in\forall_{\lambda\kappa}$ in the language $\Sigma$, let $\cA$ be a $\lambda$-mono generated category, and $F:\cA\ra\Mod(T)$ a functor that turns $\lambda$-directed colimits of monomorphisms into colimits (not necessarily of monomorphisms).  Let $\cS$ be a $\lambda$-dense set of $\lambda$-spans between objects $X,Y$ of $\cA$, and $X\overset{f}{\linto}U\overset{g}{\into}Y\in\cS$.  Let $F(X)\linto U_0\lepi F(U)$ and $F(U)\epi U_1\into F(Y)$ be the image factorizations of $F(f)$ resp.\ $F(g)$ in $\Mod(T)$.  Then $U_0$ and $U_1$ are canonically isomorphic.
\end{lemma}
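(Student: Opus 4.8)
The plan is to show that the two image factorizations exhibit $U_0$ and $U_1$ as \emph{one and the same} quotient of $F(U)$. Write $q_0\colon F(U)\epi U_0$ and $q_1\colon F(U)\epi U_1$ for the surjective parts and recall that $U_0\into F(X)$, $U_1\into F(Y)$ are embeddings (they carry the induced structure). The candidate isomorphism is $\phi\colon U_0\to U_1$, $q_0(a)\mapsto q_1(a)$; it commutes with the surjections by construction, so it is the sought \emph{canonical} isomorphism once we know it is a well-defined isomorphism of $\Sigma$-structures. Since $\phi$ is defined on a surjective image, this reduces to two symmetric biconditionals, holding for all $a,b$ in the underlying sort(s) of $F(U)$ and all relation symbols $R$ with tuple $\bar a$ over $F(U)$:
\[ F(f)(a)=F(f)(b)\iff F(g)(a)=F(g)(b), \qquad R^{F(X)}\!\big(F(f)\bar a\big)\iff R^{F(Y)}\!\big(F(g)\bar a\big). \]
Function symbols need no separate treatment: $F(f),F(g)$ are homomorphisms, so $q_0,q_1$ automatically respect function interpretations, and the second equivalence already absorbs preservation \emph{and} reflection of relations because the legs are embeddings.

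To prove the two forward implications I would first descend to a small stage. Write $X$ as a $\lambda$-directed colimit $X=\colim_\alpha X_\alpha$ of $\lambda$-generated objects along monomorphisms; by hypothesis $F$ turns this into a colimit $F(X)=\colim_\alpha F(X_\alpha)$ in $\Mod(T)$. Because $T\in\forall_{\lambda\kappa}$ the inclusion $\Mod(T)\inc\str(\Sigma)$ creates $\lambda$-filtered colimits, and since every symbol of $\Sigma$ has arity $<\lambda$, both the underlying set(s) and the relations of $F(X)$ are computed stagewise along this $\lambda$-directed diagram. As $U$ is $\lambda$-generated, $f$ factors as $U\xra{f_1}X_\alpha\into X$, the leg $X_\alpha\into X$ being mono by Lemma~\ref{monococone}$(i)$. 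Hence an equation $F(f)(a)=F(f)(b)$, respectively a relation $R^{F(X)}(F(f)\bar a)$, is already witnessed at a finite stage: after enlarging $\alpha$ there is a $\lambda$-generated $G:=X_\alpha$, a mono $G\into X$, and a factorization $f=(G\into X)\circ f_1$ with $F(f_1)(a)=F(f_1)(b)$, respectively $R^{F(G)}(F(f_1)\bar a)$.

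The heart of the argument is then a single application of \textsf{(back)} from Def.~\ref{dense} to the span $U\in\cS$ and the mono $G\into X$: it yields a span $X\overset{p}{\linto}V\overset{q}{\into}Y$ in $\cS$ with a map $\iota\colon G\to V$ over $X$ and a span-morphism $m\colon U\to V$. Since $p$ is mono and $p\iota f_1=f=pm$, we get $\iota f_1=m$; applying $F$ gives $F(m)(a)=F(\iota)F(f_1)(a)=F(\iota)F(f_1)(b)=F(m)(b)$, and in the relational case $R^{F(V)}(F(m)\bar a)$ because the homomorphism $F(\iota)$ preserves $R$. Finally $g=qm$ transports this along the right leg: $F(g)(a)=F(q)F(m)(a)=F(q)F(m)(b)=F(g)(b)$, respectively $R^{F(Y)}(F(g)\bar a)$, as $F(q)$ preserves $R$. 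The reverse implications are obtained symmetrically, applying \textsf{(forth)} to a $\lambda$-generated subobject of $Y$ through which $g$ factors at a stage where the equation or relation is witnessed, and transporting back along the left leg $F(p)$.

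I expect the one delicate point to be the stagewise computation of $F(X)$: one must be sure that $F(X)=\colim_\alpha F(X_\alpha)$ is genuinely $\lambda$-filtered and that an equality, or a $(<\lambda)$-ary relation, holding there is already detected at a single stage. This is precisely where the hypotheses $T\in\forall_{\lambda\kappa}$ (forcing $\Mod(T)\inc\str(\Sigma)$ to create $\lambda$-filtered colimits) and ``arity $<\lambda$'' are consumed, together with $\lambda$-directedness, which lets one collect the $(<\lambda)$-many entries of a tuple and the witnessing relation into one stage; everything else is routine diagram-chasing. Once both biconditionals are in hand, $\phi$ is a bijection of underlying sets that respects functions and preserves and reflects all relations, hence an isomorphism $U_0\cong U_1$ of $T$-models commuting with $q_0,q_1$, which is exactly the asserted canonical isomorphism.
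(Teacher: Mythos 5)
Your proposal is correct and follows essentially the same route as the paper's proof: define the map $U_0\to U_1$ on elements via the two surjections, and establish both the equality biconditional and the relation biconditional by factoring $f$ through a $\lambda$-generated stage $X_\alpha$ where the equation or relation is already witnessed, applying the \textsf{(back)}/\textsf{(forth)} density condition to $X\linto X_\alpha$, and using that the left leg of the resulting span is mono to force commutativity before pushing along the right leg. The only (immaterial) difference is that the paper arranges for all the $X_\alpha$ to contain $U$ from the outset rather than enlarging $\alpha$ afterwards.
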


\begin{proof}
To begin with, let $z_1,z_2$ be elements of $F(U)$ such that $F(f)(z_1)=F(f)(z_2)$ in $F(X)$.  We claim that then $F(g)(z_1)=F(g)(z_2)$ as elements of $F(Y)$.

Write $X$ as a $\lambda$-directed colimit of $\lambda$-generated subobjects $X_\alpha$ and monomorphisms; without loss of generality, the $X_\alpha$ contain $U$.  Then $F(X)$ is the $\lambda$-directed colimit of the $\{\,F(X_\alpha)\,\}$.  Since $\lambda$-directed colimits are computed on underlying structures in $\Mod(T)$, there is $X_\alpha$ with $i:U\ra X_\alpha$ such that $F(i)(z_1)=F(i)(z_2)$.  Apply density to $X\la U\ra Y$ and $X\la X_\alpha$ to find appropriate $X\la V\ra Y$, $U\ra V$ and $X_\alpha\ra V$.  Consider the diagram
\begin{equation*}  \tag{$\dag$}
\xymatrix{  & F(V) \ar[dl]\ar[dr] \\
F(X) & F(X_\alpha)\ar[l]\ar[u] & F(Y) \\
& F(U) \ar[u]\ar[ul]^{F(f)}\ar[ur]_{F(g)}\ar@/_9mm/[uu] }
\end{equation*}
Since the composites $U\ra X_\alpha\ra V\ra X$ and $U\ra V\ra X$ both equal $f:U\ra X$, and $V\ra X$ is mono, $U\ra X_\alpha \ra V=U\ra V$; it follows that the diagram commutes.  Thence $F(g)(z_1)=F(g)(z_2)$ as claimed.  A symmetric argument establishes that $F(g)(z_1)=F(g)(z_2)$ implies $F(f)(z_1)=F(f)(z_2)$.

Define a map $w$ from the set underlying $U_0$ to the set underlying $U_1$ as follows: for $u\in U_0$, find $z\in F(U)$ with $u=F(f)(z)$ and let $w(u)=F(g)(z)$.  By the above, $w$ is well-defined; it is immediate that it is bijective and preserves the interpretation of all function symbols.  To show that $w$ is an isomorphism of $U_0$ and $U_1$ as $\Sigma$-structures, it remains to check that it preserves the interpretation of relation symbols in $\Sigma$.  So let $R$ be a relation symbol of arity $\mu<\kappa$ and let $\uu=\{u_i\;|\;i<\mu\}$ be a tuple of elements of $U_0$ such that $U_0\models R(\uu)$.  Let $\zz\in U$ be such that $\uu=F(f)(\zz)$.  Similarly to the argument above, one can find a $\lambda$-generated subobject $X_\alpha$ of $X$, with an inclusion $i:U\ra X_\alpha$, such that $F(X_\alpha)\models R\big(i(\zz)\big)$.  Repeat the above argument to construct the diagram $(\dag)$.  Since $F(X_\alpha)\ra F(V)\ra F(Y)$ are homomorphisms of $\Sigma$-structures, $F(Y)\models R\big(F(g)(\zz)\big)$.  But this means $U_1\models R(w(\uu))$, since $U_1$ is induced from $F(Y)$ as $\Sigma$-substructure.  Conversely, $U_1\models R(w(\uu))$ implies $U_0\models R(\uu)$.
\end{proof}

Under the assumptions of the lemma, one can associate to any $X\linto U\into Y\in\cS$ a diagram in $\Mod(T)$
\[  \xymatrix{ F(X) & W\ar@{>->}[l]\ar@{>->}[r] & F(Y) \\
& F(U) \ar[lu]\ar[ru]\ar@{->>}[u] }  \]
where $F(X)\linto W\lepi F(U)$ and $F(U)\epi W\into F(Y)$ are (isomorphic to)\ image factorizations.  We will refer to $F(X)\linto W\into F(Y)$ as an image factorization of $F(X)\la F(U)\ra F(Y)$.  Though only defined up to isomorphism, note that if $F(X)\linto W_0\into F(Y)$ resp.\ $F(X)\linto W_1\into F(Y)$ are image factorizations of $F(X)\la F(U_0)\ra F(Y)$ resp.\ $F(X)\la F(U_1)\ra F(Y)$, then any morphism of spans $s:U_0\ra U_1$ in $\cS$ induces a (unique)\ morphism $w:W_0\ra W_1$ making
\[  \xymatrix{
& F(U_1)\ar@{>>}[d]\ar[ddl]\ar[ddr] \\
& W_1\ar@{>->}[dl]\ar@{>->}[dr] \\
F(X) && F(Y) \\
& W_0\ar[uu]_w\ar@{>->}[lu]\ar@{>->}[ru] \\
& F(U_0)\ar@{>>}[u]\ar[luu]\ar[ruu]\ar@/_30mm/[uuuu]_{F(s)}
} \]
commutative.

\begin{thm}  \label{eklof}
Let $\cA$ be a $\lambda$-mono-generated category, $T\in\forall_{\lambda\kappa}$, and $F:\cA\ra\Mod(T)$ a functor that takes $\lambda$-directed colimits of monos to colimits.  Then $X\sim_\lambda Y$ implies $F(X)\sim_\lambda F(Y)$.
\end{thm}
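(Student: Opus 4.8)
The plan is to follow the strategy of Prop.~\ref{pres}, but, since $F$ is no longer assumed to preserve monos, to replace each span $F(X)\la F(U)\ra F(Y)$ by its image factorization so as to recover genuine spans. By Cor.~\ref{lambda} I may choose a $\lambda$-dense set $\cS$ of $\lambda$-spans $X\linto U\into Y$. For each such $U$, Lemma~\ref{factor} together with the discussion following it produces an image factorization $F(X)\linto W\into F(Y)$ with both legs mono, well-defined up to canonical isomorphism and functorial in morphisms of spans. Let $\cS'$ be the resulting set of spans between $F(X)$ and $F(Y)$; it is non-empty since $\cS$ is. The content of the theorem is that $\cS'$ is $\lambda$-dense.

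The key preparatory step confronts the failure of $F$ to preserve monos. I write $X$ as a $\lambda$-directed colimit of $\lambda$-generated subobjects $X_\alpha\into X$, so that $F(X)=\colim_\alpha F(X_\alpha)$; but the coprojections $F(X_\alpha)\ra F(X)$ need not be mono. To repair this I pass to the image factorizations $F(X_\alpha)\epi Z_\alpha\into F(X)$ in $\Mod(T)$. Since $\lambda$-directed colimits in $\Mod(T)$ are created on underlying sets and every element of $F(X)$ lies in some $Z_\alpha$, the substructures $Z_\alpha$ form a $\lambda$-directed family of monomorphisms whose union, hence $\lambda$-directed colimit, is $F(X)$. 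Thus $F(X)$ has been re-expressed as a $\lambda$-directed colimit of \emph{monomorphisms}, which is exactly what a merely $\lambda$-generated test object can see.

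For the \textsf{(back)} clause of Def.~\ref{dense}, suppose a span $W\in\cS'$ arising from $U\in\cS$ and a monomorphism $F(X)\linto G$ with $\lambda$-generated $G$ are given. Because $F(X)=\colim_\alpha Z_\alpha$ is now a $\lambda$-directed colimit of monos, $\lambda$-generatedness of $G$ forces $F(X)\linto G$ to factor as $G\ra Z_\alpha\into F(X)$ for some $\alpha$. I then apply the density of $\cS$ in $\cA$ to the span $U$ and the $\lambda$-generated subobject $X_\alpha\into X$, obtaining $U_j\in\cS$ with maps $U\ra U_j$ and $X_\alpha\ra U_j$ over $X$. Let $W_j$ be the image factorization of $F(X)\la F(U_j)\ra F(Y)$. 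Since $F(X_\alpha)\ra F(X)$ factors through $F(U_j)\ra F(X)$, the image $Z_\alpha$ is contained in the image $W_j$ as subobjects of $F(X)$, so the composite $G\ra Z_\alpha\into W_j$ is the required map $G\ra W_j$, with composite to $F(X)$ equal to the original mono; and $U\ra U_j$ induces a morphism of spans $W\ra W_j$ by the functoriality recorded after Lemma~\ref{factor}. The \textsf{(forth)} clause is symmetric, using a presentation of $Y$ as a $\lambda$-directed colimit of $\lambda$-generated subobjects.

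I expect the main obstacle to be precisely the preparatory step of the second paragraph. A $\lambda$-generated object need not factor through an arbitrary $\lambda$-directed colimit, only through one of monos; so the non-mono coprojections $F(X_\alpha)\ra F(X)$ must be replaced by their images $Z_\alpha$ before the test object $G$ can be localized at a stage. Verifying that $\colim_\alpha Z_\alpha\cong F(X)$ and the containment $Z_\alpha\subseteq W_j$ both rely on the universality of $T$, which guarantees that set-theoretic images are again substructures and that $\lambda$-directed colimits are computed on underlying sets.
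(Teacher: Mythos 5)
Your proposal is correct and takes essentially the same route as the paper's own proof: both replace each $F(X)\la F(U)\ra F(Y)$ by its image factorization using Lemma~\ref{factor} and the functoriality recorded after it, and both verify density by localizing the test object $G$ at a stage $X_\alpha$ of a presentation of $X$. The only (harmless) variation is in that localization step: the paper chooses fewer than $\lambda$ generators of $G$ inside $F(X)$ and extends the map defined on them using that the image is an induced substructure, whereas you factor $G\ra F(X)$ through the $\lambda$-directed colimit of the images $Z_\alpha$ of the $F(X_\alpha)$ directly from the definition of $\lambda$-generated; both hinge on the same fact that $\lambda$-filtered colimits in $\Mod(T)$ are computed on underlying structures.
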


\begin{proof}
Let $\cS$ be a $\lambda$-dense set of $\lambda$-spans between $X$ and $Y$.  We claim that the set $\cS_T$ of image factorizations of $F(X)\la F(U)\ra F(Y)$, with $X\la U\ra Y\in\cS$, is $\lambda$-dense between $F(X)$ and $F(Y)$.  Indeed, $\cS_T$ is non-empty.  Let $F(X)\linto W\into F(Y)\in\cS_T$, a factorization of $F(X)\la F(U)\ra F(X)$, and $F(X)\linto G$ be given, with $\lambda$-generated $G$.  Since $\lambda$-filtered colimits in $\Mod(T)$ are created by the colimits of underlying structures, this implies that there are elements $\uu=\{u_i\;|\;i\in I\}$ in $G$, with $|I|<\lambda$, whose closure under function and constant symbols is the set underlying $G$.  Without loss of generality, $F(X)\linto G$ is injective on underlying sets.  (If not, let $F(X)\linto G_0\linto G$ be its image factorization, and solve the span extension problem for $F(X)\linto G_0$.)  We will identify $G$ with its image in $F(X)$.  Write $X$ as a $\lambda$-directed colimit of $\lambda$-generated subobjects $X_\alpha$.  There exists $i_\alpha:X_\alpha\into X$ such that the set-theoretical image of $F(i_\alpha)$ contains $\uu$.  Apply density of $\cS$ to $X\linto U\into Y$ and $X\linto X_\alpha$ to find appropriate $X\la V\ra Y\in\cS$, $s:U\ra V$ and $X_\alpha\ra V$.  Consider the diagram
\[  \xymatrix{
F(X_\alpha)\ar[r]\ar[dd] & F(V)\ar@{>>}[d]\ar[ddl]\ar[ddr] \\
& Z\ar@{>->}[dl]\ar@{>->}[dr] \\
F(X) && F(Y) \\
& W\ar[uu]_w\ar@{>->}[lu]\ar@{>->}[ru] \\
G\ar[uu]\ar@{.>}[ruuu] & F(U)\ar@{>>}[u]\ar[luu]\ar[ruu]\ar@/_30mm/[uuuu]_{F(s)}
} \]
where $w$ is induced by $F(s)$ as above.  The image of $Z$ in $F(X)$ contains $\uu$.  Recall that $Z$ is an induced substructure of $F(X)$, $G$ is a substructure of $F(X)$, and $\uu$ generates $G$.  Thence the dotted map $G\ra Z$, a priori only defined on $\uu$, extends to give a morphism $G\ra Z$, verifying the ``back'' case of the $\lambda$-density of $\cS_T$.  The ``forth'' case is symmetric.
\end{proof}

\begin{exam}
Let $F:\Grp\ra\Ab$ be the abelianization functor, sending a group $G$ to $G/[G,G]$.  $F$ preserves filtered colimits and Thm.~\ref{eklof} applies; so, if two groups are $\lambda$-equivalent, so are their abelianizations.  Here, $\lambda$-equivalence coincides with the usual notion of $\LL_{\infty\lambda}$-equivalence in the language of groups; see Thm.~\ref{karp}.  That abelianization preserves $\LL_{\infty\lambda}$-equivalence can be proved directly, using that $G/[G,G]$ is a quotient construction by an $\LL_{\omega_1\omega}$-definable equivalence relation, though the details are somewhat unpleasant.  To show that one can associate to a set of partial isomorphisms between two groups a set of partial isomorphisms between their abelianizations, it seems that one needs to go through the equivalent of Lemma~\ref{factor}, since the abelianization of a partial isomorphism need not be one.
\end{exam}

\begin{cor}  \label{pres2}
Let $\cA$ be a $\lambda$-mono-generated category with the property that directed colimits of monomorphisms exist and are mono.  Suppose the functor $F:\cA\ra\B$ preserves monomorphisms and $\lambda$-directed colimits of monomorphisms \textsl{or} $F$ takes $\lambda$-directed colimits of monos to colimits, and $\B$ is equivalent to $\Mod(T)$ for a $\forall_{\lambda\kappa}$ theory $T$.  Then for all regular $\mu\geq\lambda$, if $X\sim_\mu Y$ then $F(X)\sim_\mu F(Y)$.
\end{cor}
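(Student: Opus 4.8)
The plan is to deduce the statement from its two ``level-$\lambda$'' prototypes, Prop.~\ref{pres} and Thm.~\ref{eklof}, by replaying each at the index $\mu$ in place of $\lambda$. First I would check that every relevant hypothesis is inherited as the index is raised to a regular $\mu\geq\lambda$. Since every $\mu$-directed poset is a fortiori $\lambda$-directed, a functor preserving $\lambda$-directed colimits of monos automatically preserves $\mu$-directed ones, and likewise one that turns $\lambda$-directed colimits of monos into colimits turns $\mu$-directed ones into colimits; so both alternatives on $F$ persist verbatim. On the logical side, $\forall_{\lambda\kappa}\subseteq\forall_{\mu\kappa}$, because a conjunction of size ${<}\lambda$ is in particular of size ${<}\mu$; hence a theory $T\in\forall_{\lambda\kappa}$ still lies in $\forall_{\mu\kappa}$, and $\B\simeq\Mod(T)$ remains a legitimate target for Thm.~\ref{eklof} at level $\mu$.

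The one genuinely categorical ingredient is that $\cA$ must itself be $\mu$-mono-generated, for otherwise Prop.~\ref{pres} and Thm.~\ref{eklof} have no meaning at the index $\mu$. This is precisely where the standing hypothesis that $\cA$ has all directed colimits of monos, and that these are mono, is spent: it is exactly the assumption feeding Prop.~\ref{raise}, the ``raising the index'' result. Whenever $\lambda\triangleleft\mu$, Prop.~\ref{raise} applies directly and rewrites an arbitrary object as a $\mu$-directed colimit of $\mu$-generated subobjects; using the cofinality properties of $\triangleleft$ recorded in Makkai--Par\'{e}~\cite{MP}~2.3 one fixes such a $\kappa$ and transports the presentation, exactly as in Cor.~\ref{isochar}. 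With $\cA$ thus known to be $\mu$-mono-generated, I would then simply invoke Prop.~\ref{pres} in the first alternative, and Thm.~\ref{eklof} in the second, now at the index $\mu$, to pass from $X\sim_\mu Y$ to $F(X)\sim_\mu F(Y)$; the back-and-forth verifications for the image spans $F(X)\linto F(U_i)\into F(Y)$, respectively for their image factorizations in $\Mod(T)$, are word-for-word those already carried out, with $\lambda$ replaced by $\mu$ throughout.

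I expect the hard part to be exactly this $\mu$-mono-generatedness, and more precisely the cardinal-combinatorial core hidden inside Prop.~\ref{raise}: upgrading a given $\lambda$-directed presentation $X=\colim_\alpha X_\alpha$ by $\lambda$-generated subobjects into a $\mu$-directed presentation by $\mu$-generated subobjects. The delicate point is that a $\mu$-small family of stages $X_\alpha$ must be enlarged to a $\lambda$-directed subdiagram that is still of size ${<}\mu$, so that its colimit remains $\mu$-generated by the $\lambda$-small-colimit analogue of \cite{AR}~1.16 quoted in the proof of Prop.~\ref{raise}; and the existence of such a closure is governed precisely by the relation $\lambda\triangleleft\mu$. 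Everything else in the argument is formal: the preservation of monos by $F$ in the first alternative makes $F(X)\linto F(U_i)\into F(Y)$ an honest span, the image-factorization machinery of Lemma~\ref{factor} supplies the spans in the second, and the density bookkeeping is unchanged from the proofs of Prop.~\ref{pres} and Thm.~\ref{eklof}.
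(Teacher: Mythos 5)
Your reduction to Prop.~\ref{pres} and Thm.~\ref{eklof} at level $\mu$, and your observations that the hypotheses on $F$ and on $T$ persist when the index is raised, all match the paper's intent. The gap is in the one step you yourself flag as the crux: establishing that $\cA$ is $\mu$-mono-generated for \emph{every} regular $\mu\geq\lambda$. Prop.~\ref{raise} only yields this when $\lambda\triangleleft\mu$, and ``sharply less than'' is genuinely stronger than $\leq$ among regular cardinals: by \cite{MP}~2.3, $\lambda\triangleleft\mu$ requires $\alpha^{<\lambda}<\mu$ for all $\alpha<\mu$, which already fails for, say, $\lambda=\aleph_1$, $\mu=\aleph_2$ when $2^{\aleph_0}>\aleph_2$. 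Your proposed repair via ``the cofinality properties of $\triangleleft$\dots exactly as in Cor.~\ref{isochar}'' does not close this: in Cor.~\ref{isochar} one is free to \emph{choose} a convenient large index because the hypothesis holds for arbitrarily large cardinals, whereas here $\mu$ is prescribed and the hypothesis $X\sim_\mu Y$ does not imply $X\sim_\kappa Y$ for any $\kappa>\mu$, so passing to a sharply larger $\kappa$ gives nothing.

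This is also why you have misread the role of the standing hypothesis. The assumption that \emph{all} directed colimits of monomorphisms exist and are mono (not merely $\lambda$-directed ones, which is all Prop.~\ref{raise} needs) is precisely what the paper spends to get a stronger index-raising statement: every $\lambda$-mono-generated category with this property is $\mu$-mono-generated for \emph{every} regular $\mu\geq\lambda$, by an argument analogous to Prop.~4.1 of Beke--Rosick\'{y}~\cite{BR}, where the existence of plain ($\omega$-)directed colimits of monos lets one close an arbitrary $<\mu$-sized family of $\lambda$-generated subobjects under directed unions without any $\triangleleft$-type cardinal arithmetic. Once that is in place, the rest of your argument (monotonicity of the hypotheses on $F$, the inclusion $\forall_{\lambda\kappa}\subseteq\forall_{\mu\kappa}$, and the application of Prop.~\ref{pres} resp.\ Thm.~\ref{eklof} at level $\mu$) is exactly the paper's proof.
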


Indeed, every $\lambda$-mono-generated category $\cA$ with the above property is $\mu$-mono-generated for every regular $\mu\geq\lambda$; the proof is analogous to that of Prop.~4.1 of Beke-Rosick\'{y}~\cite{BR}.  Now apply Prop.~\ref{pres} resp.\ Thm.~\ref{eklof}. \qed

Two structures --- say, two algebraically closed fields --- can be considered as objects of many $\lambda$-mono-generated categories: of algebraically closed fields, fields, rings, abelian groups, $(+,\times)$-structures$\dots$ each carrying its notion of $\lambda$-equivalence.  We will now give some sufficient conditions for a functor $F:\cA\ra\B$ to preserve and \textsl{reflect} $\lambda$-equivalence.

\begin{prop}  \label{reflect2}
(a) Suppose $\cA$ and $\B$ are $\lambda$-mono-generated categories, $F:\cA\ra\B$ is a functor, and $G:\B\ra\cA$ is a functor preserving $\lambda$-equivalence with the property that $GF(X)$ is isomorphic to $X$ for all objects $X\in\cA$.  Then $F(X)\sim_\lambda F(Y)$ implies $X\sim_\lambda Y$. (b) Let $T$ be a basic universal theory with $\Mod(T)$ $\lambda$-mono-generated and let the adjunction $F:\Mod(T)\leri\B:G$ exhibit $\Mod(T)$ as a reflective subcategory of a $\lambda$-mono-generated category $\B$.  If the inclusion $F$ preserves $\lambda$-filtered colimits then $X\sim_\lambda Y$ if and only if $F(X)\sim_\lambda F(Y)$.
\end{prop}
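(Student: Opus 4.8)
\emph{Proof proposal.}

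Part (a) should follow immediately from transitivity of $\sim_\lambda$ together with its invariance under isomorphism. The plan is: assuming $F(X)\sim_\lambda F(Y)$, apply the hypothesis that $G$ preserves $\lambda$-equivalence to obtain $GF(X)\sim_\lambda GF(Y)$. The supplied isomorphisms $GF(X)\cong X$ and $GF(Y)\cong Y$ give $X\sim_\lambda GF(X)$ and $GF(Y)\sim_\lambda Y$ by Remark~\ref{max}(1). Since $\cA$ is $\lambda$-mono-generated, Cor.~\ref{mono-eq} makes $\sim_\lambda$ transitive on $\cA$, so chaining $X\sim_\lambda GF(X)\sim_\lambda GF(Y)\sim_\lambda Y$ yields $X\sim_\lambda Y$. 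No diagram chase is needed beyond these invocations.

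For part (b), I would first fix the orientation of the adjunction: as $\Mod(T)$ is reflective in $\B$ with inclusion $F$ and reflector $G$, we have $G\dashv F$. Hence the counit $GF\Rightarrow\id$ is a natural isomorphism, giving $GF(X)\cong X$, and $F$, being a right adjoint, preserves monomorphisms. The two implications then split along the two preservation theorems already in hand. For $X\sim_\lambda Y\Rightarrow F(X)\sim_\lambda F(Y)$: since $F$ preserves $\lambda$-filtered colimits --- in particular $\lambda$-directed colimits of monos --- and preserves monos, Prop.~\ref{pres} applies directly to $F\colon\Mod(T)\ra\B$. For the converse $F(X)\sim_\lambda F(Y)\Rightarrow X\sim_\lambda Y$: the idea is to verify that the reflector $G$ preserves $\lambda$-equivalence and then quote part (a). Being a left adjoint, $G$ preserves all colimits, so it certainly takes $\lambda$-directed colimits of monos to colimits; as $\B$ is $\lambda$-mono-generated and the codomain is $\Mod(T)$ for a theory in $\forall_{\lambda\kappa}$, Thm.~\ref{eklof} shows $G$ preserves $\sim_\lambda$. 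Now part (a), with $\cA=\Mod(T)$, the inclusion $F$, and the reflector $G$ (which satisfies both $GF\cong\id$ and preservation of $\sim_\lambda$), delivers the converse.

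The steps requiring care are entirely bookkeeping rather than new mathematics. I must confirm that in the reflective situation it is the \emph{inclusion} $F$ that is the right adjoint (so that it preserves monos and the counit is invertible), and that the cardinal $\lambda$ witnessing the $\lambda$-mono-generatedness of $\B$ and $\Mod(T)$ can be taken so that $T\in\forall_{\lambda\kappa}$, since Thm.~\ref{eklof} is stated for theories in $\forall_{\lambda\kappa}$. Once these compatibility points are checked, part (b) is a clean synthesis of Prop.~\ref{pres}, Thm.~\ref{eklof}, and part (a), with no independent argument.
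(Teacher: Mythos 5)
Your argument is correct and coincides with the paper's own proof: part (a) is the same chain $X\sim_\lambda GF(X)\sim_\lambda GF(Y)\sim_\lambda Y$ via Remark~\ref{max}(1) and Cor.~\ref{mono-eq}, and part (b) is the same synthesis --- the inclusion $F$ is the right adjoint (so preserves monos) and Prop.~\ref{pres} gives one direction, while the left adjoint reflector $G$ preserves all colimits so Thm.~\ref{eklof} plus part (a) gives the other. The compatibility points you flag (orientation of the adjunction, invertibility of the counit, $T\in\forall_{\lambda\kappa}$) are exactly the bookkeeping the paper leaves implicit.
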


\begin{proof}
(a) If $F(X)\sim_\lambda F(Y)$ then $X\sim_\lambda GF(X)\sim_\lambda GF(Y)\sim_\lambda Y$ by Cor.~\ref{mono-eq}. (b) The left adjoint $G$ preserves all colimits and the right adjoint $G$ preserves monos.  Apply Thm.~\ref{eklof}, (a), and Prop.~\ref{pres}.
\end{proof}

Note that the condition on $F$ is satisfied for any accessible adjunction.  Prop.~\ref{reflect2} applies, in particular, to the inclusion of a category of sheaves of algebras on a Grothendieck site in the corresponding category of presheaves, localizations of Grothendieck abelian categories, and reflections of the form $\Mod(T^+)\leri\Mod(T)$ where $T$ and $T^+$ are both quasi-varieties of algebras, and the theory $T^+$ extends $T$ in the same language.

One can also tweak the assumptions of Prop.~\ref{pres} to conclude that a functor preserves and reflects $\lambda$-equivalence.

\begin{prop}  \label{reflect}
Let $\cA$ and $\B$ be $\lambda$-mono-generated categories and $F:\cA\ra\B$ a functor preserving monomorphisms and $\lambda$-directed colimits of monomorphisms.  Suppose also
\begin{itemize}
\item[(i)] $F$ is (equivalent to)\ the inclusion of a full subcategory
\item[(ii)] if $X$ is $\lambda$-generated, so is $F(X)$.
\end{itemize}
Then $X\sim_\lambda Y$ if and only if $F(X)\sim_\lambda F(Y)$.
\end{prop}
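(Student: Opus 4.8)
This is a biconditional, and the two directions are of quite different character. The implication $X\sim_\lambda Y\Rightarrow F(X)\sim_\lambda F(Y)$ is already covered by Prop.~\ref{pres}: $F$ preserves monomorphisms and $\lambda$-directed colimits of monomorphisms, which is all that proposition needs, and neither (i) nor (ii) enters. So the plan is to spend all the effort on the converse, $F(X)\sim_\lambda F(Y)\Rightarrow X\sim_\lambda Y$, where the full-subcategory and $\lambda$-generation hypotheses are essential.

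Using (i), I would identify $\cA$ with a full subcategory of $\B$ along $F$, so that $X$, $Y$, and every span in $\cA$ between them are literally spans in $\B$ whose center happens to lie in $\cA$. Let $\cS$ be the greatest $\lambda$-dense set of spans between $F(X)$ and $F(Y)$ in $\B$; it exists because $F(X)\sim_\lambda F(Y)$, and by Remark~\ref{max}(3) it is a \emph{sieve}, i.e.\ closed downward along morphisms of spans. Define $\cS'$ to be the set of those $\lambda$-spans $X\linto V\into Y$ with $V\in\cA$ that, regarded in $\B$, belong to $\cS$. The goal is to show $\cS'$ is $\lambda$-dense in $\cA$; by Cor.~\ref{lambda} this gives $X\sim_\lambda Y$.

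The heart of the matter is the \textsf{(back)} clause for $\cS'$ (the \textsf{(forth)} clause is symmetric, with $Y$ in place of $X$). The obstacle is clear: feeding a span and a $\lambda$-generated $G\into X$ into the density of $\cS$ returns a new center $W\in\cS$ that in general does \emph{not} lie in $\cA$, so one cannot transport $\B$-spans back into $\cA$ verbatim. The key idea is to refuse to enlarge $W$ and instead to \emph{promote an $\cA$-subobject of $X$ to a span}. Concretely, given $V\in\cS'$ and a $\lambda$-generated mono $G\into X$ in $\cA$, I would write $X$ as the $\lambda$-directed colimit of its $\lambda$-generated subobjects from $\cA$ and pick one member $X_\gamma\in\cA$ through which both legs $V\into X$ and $G\into X$ factor; since $V$ and $G$ are $\lambda$-generated in $\cA$, this is possible, and the factorizations $V\to X_\gamma$, $G\to X_\gamma$ are monos in $\cA$. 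By (ii), $X_\gamma$ is $\lambda$-generated in $\B$, so the \textsf{(back)} clause of $\cS$ may be applied to the span $V$ and the mono $X_\gamma\into F(X)$, producing $W\in\cS$, a morphism of spans $V\to W$, and a map $X_\gamma\to W$ with $X_\gamma\to W\to F(X)$ equal to the inclusion $X_\gamma\into F(X)$.

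Now I would endow $X_\gamma$ itself with a span structure between $X$ and $Y$, its two legs being the inclusion $X_\gamma\into X$ and the composite $X_\gamma\to W\into F(Y)$; both are monomorphisms by left-cancellation. With these legs, $X_\gamma\to W$ is a morphism of spans, so the sieve property forces $X_\gamma\in\cS$; as $X_\gamma\in\cA$ is $\lambda$-generated, $X_\gamma\in\cS'$. A brief diagram chase---cancelling the mono $W\into F(X)$---shows that $V\to X_\gamma$ is a morphism of spans and that $G\to X_\gamma$ composes with the first leg to the original $G\into X$, so $X_\gamma$ is exactly the extension the \textsf{(back)} clause demands. Non-emptiness of $\cS'$ falls out of the same construction run on an arbitrary member of $\cS$ in the role of $V$. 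I expect the only friction to be bookkeeping: checking that each named arrow is monic and that the span-compatibility triangles commute, all of which follows formally from left-cancellation of monomorphisms together with the commutativities guaranteed by the \textsf{(back)} clause in $\B$.
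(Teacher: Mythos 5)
Your proposal is correct and follows essentially the same route as the paper: the forward direction is delegated to Prop.~\ref{pres}, and for the converse both arguments promote a $\lambda$-generated $\cA$-subobject $X_\gamma$ of $X$ (chosen to absorb $G$ and the given span's center) to a new span whose second leg passes through a $\B$-span supplied by density, with the required commutativities obtained by cancelling the mono into $X$. The only cosmetic difference is that the paper defines its candidate set as the $\cA$-spans admitting a morphism into some member of the witnessing set $\cS_\B$, whereas you take the greatest $\lambda$-dense set and invoke its sieve property from Remark~\ref{max}(3) to place $X_\gamma$ in it directly; these amount to the same thing.
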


\begin{proof}
Only the `if' direction is new.  To avoid clutter, we suppress $F$ from the notation, but make clear whether we work in $\B$ or its full subcategory $\cA$.  Suppose $X,Y\in\cA$ are such that $X\sim_\lambda Y$ in $\B$, thanks to the $\lambda$-dense set of $\lambda$-spans $\cS_\B$.  Let $\cS_\cA$ be the set of spans in $\cA$ defined as follows: $X\linto A\into Y\in\cS_\cA$ if and only if there exists $X\linto U\into Y\in\cS_\B$ and a map of spans
\begin{equation}  \tag{$\ddag$}
\xymatrix{ X & U\ar[l]\ar[r] & Y \\
& A \ar[lu]\ar[ru]\ar[u] }
\end{equation}
in $\B$.  We claim that $\cS_\cA$ is $\lambda$-dense between $X$ and $Y$ in $\cA$.  $\cS_\cA$ is non-empty: write $X$ as a $\lambda$-directed colimit of $\lambda$-generated subobjects $X_i$, $i\in I$, in $\cA$.  Pick any $j\in I$; then, both in $\cA$ and $\B$, $X\linto X_j$ is mono with $\lambda$-generated $X_j$.  Apply density to $X\linto X_j$ and an arbitrary span $X\linto W\into Y\in\cS_\B$ to deduce the existence of a diagram
\[ \xymatrix{ & U \ar[ld]\ar[rd] \\
X & X_j\ar[l]\ar[u] & Y \\
& W \ar[lu]\ar[ru]\ar@/_5mm/[uu] } \]
with $X\linto U\into Y\in\cS_\B$.  Thence $X\linto X_j\into U\into Y\in\cS_\cA$.

Let now $X\linto A\into Y\in\cS_\cA$ and $X\linto G$ be given, where $G$ is $\lambda$-generated in $\cA$ (hence, in $\B$).  Let $X\linto U\into Y\in\cS_\B$ be so that ($\ddag$)\ holds.  Writing $X$ as a $\lambda$-directed colimit of $\lambda$-generated subobjects $X_i$, $i\in I$ again, since both $G$ and $U$ are $\lambda$-generated, there exists $j\in I$ such that both $X\linto G$ and $X\linto U$ factor through $X_j$.  Apply density to $X\linto X_j$ and $X\linto U\into Y$ to deduce the existence of $X\linto V\into Y\in\cS_\B$ and morphisms
\[ \xymatrix{
&& V \ar[dll]\ar[dr] \\
X & X_j \ar[l]\ar[ur] & G \ar[l] & Y \\
&& U \ar[ul]\ar[ur]\ar@/_5mm/[uu] \\
&& A \ar[uull]\ar[u]\ar[uur] } \]
Note that $X\linto X_j\into V\into Y\in\cS_\cA$.  Both $U\ra X_j\ra V\ra X$ and $U\ra V\ra X$ equal $U\ra X$; since $V\ra X$ is mono, $U\ra X_j\ra V$ equals $U\ra V$.  It follows that the diagram commutes.  Thence $A\ra U\ra X_j$ is a morphism of spans from $X\linto A\into Y$ to $X\linto X_j\into Y$ verifying the ``back'' part of the density of $\cS_\cA$.  The other part is symmetric.
\end{proof}

Although the assumptions of Prop.~\ref{reflect} seem restrictive, they cannot be weakened in an obvious way.  When $\cA$ and $\B$ are $\lambda$-mono-accessible, the Uniformization Theorem of Makkai and Par\'{e}, cf.\ \cite{MP} 2.4.9, implies that their degree of accessibility can be raised so that condition $(ii)$ is satisfied.  As to condition $(i)$, consider the functor $F:\Set\times\Set\ra\Set$ that is projection on the first coordinate.  It is full (but not faithful), preserves monos and $\lambda$-directed colimits of monos, and $\lambda$-generated objects, for all regular $\lambda$; but it does not reflect $\sim_\lambda$ for any $\lambda$.  On the other hand, consider the underlying set functor $F:Grp\ra\Set$ on the category of groups.  It is faithful (but not full), preserves monos and $\lambda$-directed colimits of monos for all regular $\lambda$, and $\lambda$-generated objects for each $\lambda\geq\aleph_1$; but it does not reflect $\sim_\lambda$ for any $\lambda$.

\begin{exam}
Recall that any locally presentable category $\K$ is equivalent to the category of models of an essentially algebraic theory $T$ in some signature $\Sigma$, cf.\ \cite{AR} 3.36.  Consider the adjunction
\[  F: \K \leftrightarrows \str(\Sigma): G \]
where $F$ is the inclusion and $G$ its left adjoint.  The assumptions of Prop.~\ref{reflect} are satisfied for any $\lambda$ such that $\K$ is locally $\lambda$-presentable.  (Assumption (ii)\ holds since in an essentially algebraic signature $\Sigma$, a $T$-model is $\lambda$-generated if and only if it is the closure of less than $\lambda$ of its elements under the function symbols and constants in $\Sigma$.  This could fail for a finite limit theory containing relation symbols.)  Hence, for $T$-models $X$, $Y$, one has $X\sim_\lambda Y$ in $\K$ if and only if $X\sim_\lambda Y$ in $\str(\Sigma)$. 
\end{exam}

\begin{exam}
Let $T$ be a basic theory in the signature $\Sigma$ and consider the inclusion $F:\Mod(T)\ra\str(\Sigma)$.  Again, by the Uniformization Theorem, there exists a regular cardinal $\lambda$ such that $\Mod(T)$ and $\str(\Sigma)$ are $\lambda$-mono-generated, and $F$ preserves $\lambda$-generated objects.  If $F$ preserves monos then Prop.~\ref{reflect} applies.

Note that one can always achieve that the inclusion $F$ preserve monos without changing the class of $T$-models by passing to an extension of $T$ by definition.  Add the binary predicate $n(-,-)$ (to be thought of as `not equal')\ to $\Sigma$ and let $T^+$ be $T$ together with the basic universal axioms
\[   \big( n(x,y) \wedge x=y \big) \ra \bot \]
\[   n(x,y) \,\vee\, x=y \]
Then $T$-models can be identified with $T^+$-models, but morphisms in $\Mod(T^+)$ preserve $n(-,-)$, i.e.\ are injective on underlying structures, so $\Mod(T^+)\ra\str(\Sigma)$ preserves monos.
\end{exam}

We now turn to the relation between $\lambda$-equivalence in a category of structures and the classical syntactic notion of $\LL_{\infty\lambda}$-equivalence.  Let $\lambda$ be a regular cardinal and $\Sigma$ a $\lambda$-ary signature.  We can associate two categories $\emb(\Sigma)\inc\str(\Sigma)$ to this signature.  The objects of each are the $\Sigma$-structures, but morphisms of $\str(\Sigma)$ are homomorphisms of $\Sigma$-structures, while morphisms of $\emb(\Sigma)$ are embeddings of substructures.  Both of these categories are $\lambda$-mono-generated; in fact, $\str(\Sigma)$ is locally presentable and $\emb(\Sigma)$ is accessible.  An object of $\str(\Sigma)$ is $\lambda$-generated if and only if it is generated by less than $\lambda$ of its elements and the sum of the cardinalities of the interpretations of its relation symbols is less than $\lambda$, cf.\ \cite{AR}~5.B.  An object of $\emb(\Sigma)$ is $\lambda$-generated if and only if it is generated by less than $\lambda$ of its elements in the classical model-theoretic sense (that is, the closure of those elements and the constants under the function symbols is all of the domain).  The main result is that for $\Sigma$-structures $X$ and $Y$, all three notions -- $\LL_{\infty\lambda}$-equivalence and $\sim_\lambda$ as objects of $\str(\Sigma)$ resp.\ of $\emb(\Sigma)$ --  coincide.

\begin{thm}  \label{karp}
Let $\lambda$ be a regular cardinal and $\Sigma$ be a $\lambda$-ary signature.  Then for the $\Sigma$-structures $X$ and $Y$, the following conditions are equivalent:
\begin{enumerate}
\item[(1)] $X$ and $Y$ are $\LL_{\infty\lambda}$-elementary equivalent
\item[(2)] there is a non-empty set $I$ of partial isomorphisms between $X$ and $Y$ satisfying the $<\lambda$-back-and-forth property
\item[(3)] $X\sim_\lambda Y$ in $\emb(\Sigma)$
\item[(4)] $X\sim_\lambda Y$ in $\str(\Sigma)$.
\end{enumerate}
\end{thm}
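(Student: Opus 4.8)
The plan is to treat $(1)\biimp(2)$ as the classical input and to supply categorical translations for the remaining equivalences, organising everything around the cycle $(1)\biimp(2)\biimp(3)\dra(4)\dra(2)$. The equivalence $(1)\biimp(2)$ is exactly Karp's theorem on $\LL_{\infty\lambda}$-equivalence, which I would simply invoke. The conceptual core is a dictionary between spans in $\emb(\Sigma)$ and partial isomorphisms: an $\emb$-span $X\linto U\into Y$ has both legs embeddings, so $U$ is carried isomorphically onto a full substructure $A\inc X$ and onto a full substructure $B\inc Y$, and the resulting $A\cong U\cong B$ is precisely a partial isomorphism between $X$ and $Y$; conversely a partial isomorphism $A\cong B$ yields the $\emb$-span with center $A$. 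Under this dictionary a $\lambda$-span corresponds to a partial isomorphism whose domain is generated by fewer than $\lambda$ elements (the description of $\lambda$-generated objects in $\emb(\Sigma)$ recalled just before the theorem), a morphism of spans corresponds to an extension of partial isomorphisms, and the \textsf{(back)}/\textsf{(forth)} clauses of Def.~\ref{dense} translate verbatim into the $<\lambda$-back-and-forth property. This yields $(2)\biimp(3)$ once one passes, via Cor.~\ref{lambda} and the sieve property of Rem.~\ref{max}, between arbitrary back-and-forth systems and $\lambda$-dense sets of $\lambda$-spans.

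For $(3)\dra(4)$, note that every embedding is a monomorphism in $\str(\Sigma)$, so an $\emb$-span is in particular a $\str$-span; the point is to see that an $\emb$-dense set remains $\lambda$-dense in $\str(\Sigma)$, where the test objects $G\into X$ are now arbitrary injective homomorphisms with $\str$-$\lambda$-generated $G$. Given such a $G$, I would replace it by the full substructure $\bar G\inc X$ on the image of $G$: this $\bar G$ is still generated by fewer than $\lambda$ elements, hence $\lambda$-generated in $\emb(\Sigma)$, and the corestriction $G\ra\bar G$ is a homomorphism. Applying the $\emb$-density to $\bar G$ produces the required extension, through which the original map factors; the relation-count of $G$ plays no role, since only its generators are used. (One uses here that a union of fewer than $\lambda$ many $<\lambda$-generated substructures is again $<\lambda$-generated, i.e.\ the regularity of $\lambda$.)

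The substance of the theorem is $(4)\dra(2)$: from a $\lambda$-dense set $\cS$ of $\str$-spans --- whose legs are injective homomorphisms that need \emph{not} reflect relations --- one must manufacture genuine, relation-reflecting partial isomorphisms. The mechanism I would exploit is that the \textsf{(back)} clause, applied to a \emph{weak} substructure $G\into X$ carrying a prescribed set of fewer than $\lambda$ relational tuples, forces those relations across: the homomorphism $G\ra U_j\ra Y$ preserves them, so each relation holding in $X$ on a tuple of $\im f$ is transported to the matching tuple of $\im g$, and \textsf{(forth)} transports relations from $Y$ back to $X$. A single density step only moves fewer than $\lambda$ relations, so to obtain one partial isomorphism reflecting \emph{all} relations on its domain I would iterate: enumerate the relational tuples of the relevant induced substructures and transport them fewer-than-$\lambda$ at a time, forming an increasing chain of spans in $\cS$ and passing to its colimit. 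By Lemma~\ref{monococone} the colimit legs are again monomorphisms, and since $\lambda$-directed colimits in $\str(\Sigma)$ are computed on underlying structures, the induced bijection between the colimit images reflects every relation in both directions, hence is an isomorphism of induced substructures. Collecting all partial isomorphisms so produced, together with their restrictions, gives the system demanded by $(2)$; the back-and-forth property itself is checked by re-running the closure from any given span together with any fewer-than-$\lambda$ new generators.

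I expect $(4)\dra(2)$ to be the main obstacle, and specifically the bookkeeping in the closure construction: choosing the length of the transfinite chain (a regular cardinal at least $\lambda$), arranging the connecting maps to be monomorphisms so that Lemma~\ref{monococone} applies, and verifying that every relational tuple of the final induced substructure already appears --- hence gets transported --- at some bounded stage. The remaining implications are, by contrast, formal once the span/partial-isomorphism dictionary is in place.
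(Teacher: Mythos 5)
Your handling of $(1)\biimp(2)\biimp(3)$ and of $(3)\dra(4)$ matches the paper's in substance: the dictionary between $\emb(\Sigma)$-spans and partial isomorphisms is exactly how the paper treats $(2)\biimp(3)$, and your reduction of a $\str(\Sigma)$-test object $G$ to the induced substructure on its image is an unfolding of the paper's one-line appeal to Prop.~\ref{pres} for the inclusion $\emb(\Sigma)\inc\str(\Sigma)$.

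The gap is in $(4)\dra(2)$, and it sits precisely in the step you defer as ``bookkeeping.'' The transfinite closure you propose is both problematic and unnecessary. Problematic: the colimit of a chain of $\lambda$-spans of length $\geq\lambda$ is no longer $\lambda$-generated, so the saturated partial isomorphisms you collect cannot be fed back into the density clauses of $\cS$; verifying the back-and-forth property of the new system by ``re-running the closure'' is therefore not routine but requires controlling how the colimit spans relate to $\cS$, which is essentially what you are trying to prove. The relational tuples to be enumerated also live on the images of the growing spans, a moving target. Unnecessary: analyze your own transport mechanism one step further. Given $X\overset{f}{\linto}U\overset{g}{\into}Y\in\cS$ and a tuple $\mathbf{a}=f(\zz)$ in $\im f$ with $X\models R(\mathbf{a})$, apply \textsf{(back)} to a weak, $\str(\Sigma)$-$\lambda$-generated substructure $G\into X$ carrying the single relational tuple $R(\mathbf{a})$. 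The resulting $U_j$ receives compatible maps from $G$ and from $U$; since $U_j\into X$ is injective, the image of $\zz$ in $U_j$ coincides with that of $\mathbf{a}$, and since $U\ra U_j\ra Y$ equals $g$, the homomorphism $G\ra U_j\ra Y$ forces $Y\models R(g(\zz))$. The conclusion concerns the \emph{original} span: nothing is enlarged, each tuple is disposed of by a single density application, and so the two images of every span in $\cS$ are already isomorphic as induced substructures. This is the paper's Lemma~\ref{factor}, invoked via Thm.~\ref{eklof} applied to the identity functor on $\str(\Sigma)$ (the empty theory being basic universal); the resulting set of image factorizations consists of $\emb(\Sigma)$-spans, and its $\lambda$-density in $\emb(\Sigma)$ is checked by the converse of your $(3)\dra(4)$ manoeuvre --- strip the relations from an $\emb(\Sigma)$-test object $G$ to get a $\str(\Sigma)$-$\lambda$-generated $G_0$, apply density there, and restore the relations at the end.
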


\begin{proof}
The equivalence of (1)\ and (2)\ is Karp's theorem.  We use the formulation of Nadel--Stavi~\cite{NS}~Theorem 1.1; see e.g.\ Dickmann~\cite{D}~Corollary~5.3.22 for a complete discussion.

As to the relation of (2)\ and (3), recall that $I$ is a set of isomorphisms $f:U\to U'$ such that $U$ is a substructure of $X$ and $U'$ is a substructure of $Y$.  The category of partial isomorphisms between $X$ and $Y$, with morphisms extensions of partial isomorphisms, is equivalent to the category of spans between $X$ and $Y$ in $\emb(\Sigma)$.  We will identify the two notions and only need to check implications between the respective variants of the back-and-forth property (describing just the ``back'' direction, by symmetry).

(2)$\dra$(3): Given a span $X\linto U\into Y$ and $\lambda$-generated mono $X\linto G$ in $\emb(\Sigma)$, select a set $Z$ of $<\lambda$ generators of $G$ and use the back part of (2)\ to find a span $X\linto V\into Y$ that $X\linc U\inc Y$ maps to in $\emb(\Sigma)$, with compatible map on underlying sets $Z\ra |V|$.  Since $X\linto V$ is the embedding of a substructure, $Z\ra |V|$ extends to a compatible morphism $G\into V$ in $\emb(\Sigma)$.

(3)$\dra$(2): given a partial isomorphism $X\linc U\inc Y$ and subset $Z$ of the set underlying $X$, with $\card(Z)<\lambda$, let $G$ be the $\Sigma$-substructure of $X$ generated by $Z$.  Thus $G$ is a $\lambda$-generated object of $\emb(\Sigma)$, with $X\linto G$.  The back property in (3)\ yields a partial isomorphism $X\linc V\inc Y$ with compatible $G\ra V$, hence the image of $V$ in $X$ containing $Z$ as desired.

(3)$\dra$(4): apply Prop.~\ref{pres} to the inclusion $\emb(\Sigma)\inc\str(\Sigma)$.

(4)$\dra$(3): let $\cS$ be a $\lambda$-dense set of $\lambda$-spans between $X$ and $Y$ in $\str(\Sigma)$.  The empty theory in the signature $\Sigma$, whose category of models is $\str(\Sigma)$, is basic universal.  The proof of Thm.~\ref{eklof}, applied to the identity functor $\str(\Sigma)\ra\str(\Sigma)$, shows that the set $\cS_\emptyset$ of image factorizations of elements of $\cS$, is $\lambda$-dense in $\str(\Sigma)$.  Note that the image factorization of a span in $\str(\Sigma)$ is a span in $\emb(\Sigma)$.  We only have to show that $\cS_\emptyset$ is $\lambda$-dense in $\emb(\Sigma)$ as well.  Indeed, let $X\linto U\into Y\in\cS_\emptyset$ and $X\linto G$ be given, where $G$ is $\lambda$-generated in $\emb(\Sigma)$.  Let now $G_0$ be identical to the structure $G$ with the exception that all relations in $\Sigma$ are interpreted by the empty set.  Then $G_0$ is $\lambda$-generated in $\str(\Sigma)$.  Apply density to $X\linto U\into Y$ and $X\linto G_0$ to deduce the existence of $X\linto V\into Y\in\cS_\emptyset$ with compatible maps $G_0\ra V$ and $U\ra V$ in $\str(\Sigma)$.  Since $X\linto U$ and $X\linto V$ are embeddings, so is $U\ra V$.  Endowing $G_0$ with the interpretation of the relation symbols induced from $X$, one regains $G$ and a commutative diagram that verifies the ``back'' condition in $\emb(\Sigma)$; and analogously for the ``forth''.  This completes the proof of Theorem~\ref{karp}.
\end{proof}

\section{Elementary embeddings}
Parallel to the theory of $\lambda$-equivalence, there exists a theory of $\lambda$-embeddings in categories.

\begin{defn}  \label{elem}
A morphism $f:X\to Y$ in a category $\cA$ is called a $\lambda$-\textsl{embedding} if there is a $\lambda$-dense set $\cS$ of spans between $X$ and $Y$ such that for any monomorphism $g:G\into X$ with $\lambda$-generated $G$ there exist $X\llla{u}U\llra{v}Y\in\cS$ and $t:G\ra U$ such that in the diagram
\[ \xymatrix{ X\ar[rr]^f && Y \\
& U \ar[lu]_u\ar[ru]^v \\
& G\ar[luu]^g\ar[u]_t } \]
$ut=g$ and $vt=fg$.
\end{defn}

\begin{rem}\label{re3.2}
\begin{itemize}
\item[(1)] It would be equivalent to demand that there exist a $\lambda$-dense set $\cS$ of $\lambda$-spans with the above property.  The proof is similar to that of Cor.~\ref{lambda}.  Moreover, if there is a $\lambda$-dense set of spans (resp.\ $\lambda$-dense set of $\lambda$-spans) satisfying the definition, then, without loss of generality, we can take it to be the greatest $\lambda$-dense set of spans (resp.\ greatest $\lambda$-dense set of $\lambda$-spans), cf.\ Remark~\ref{max}(3).
\item[(2)] If $\lambda<\kappa$ then any $\kappa$-embedding is also a $\lambda$-embedding.
\item[(3)] Any isomorphism $f:X\to Y$ is a $\lambda$-embedding.  Just let $\cS$ consist of the span $X\llla{\id}X\llra{f}Y$, cf.\ Remark~\ref{max}(1).
\end{itemize}
\end{rem}

\begin{rem}
If the set of all $\lambda$-spans between $X$ and $Y$ is $\lambda$-dense then every monomorphism $f:X\into Y$ is a $\lambda$-embedding.  Indeed, given $g:G\into X$, set $U=G$, $t=\id_G$, $u=g$ and $v=fg$.

The converse is not true, since the fact that all monos from $X$ to $Y$ are $\lambda$-embeddings can also hold vacuously.  Work, for example, in the category $\Set\times\Set$.  Consider cardinals $\lambda<\mu_1<\mu_2$ and let $X=(\mu_1,\mu_2)$ and $Y=(\mu_2,\mu_1)$.  Then the set of all $\lambda$-spans between $X$ and $Y$ is $\lambda$-dense (this is just Example~\ref{set} applied coordinatewise)\ but there exists no monomorphism from $X$ to $Y$, or from $Y$ to $X$.  This example also shows that while the existence of a $\lambda$-embedding between two objects obviously implies that they are $\lambda$-equivalent, the converse is not true.
\end{rem}

The next several propositions show that $\lambda$-embeddings satisfy the expected properties.

\begin{lemma}   \label{embmono}
Let $\cA$ be a $\lambda$-mono-generated category. Then any $\lambda$-embedding is a monomorphism.
\end{lemma}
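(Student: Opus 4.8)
The plan is to prove that $f$ is a monomorphism via the standard reduction to $\lambda$-generated test objects: in a $\lambda$-mono-generated category, a morphism $f\colon X\to Y$ is mono precisely when $fa=fb$ implies $a=b$ for every parallel pair $a,b\colon C\to X$ whose domain $C$ is $\lambda$-generated. Indeed, writing an arbitrary domain $W$ as a $\lambda$-directed colimit of $\lambda$-generated objects (possible since $\cA$ is $\lambda$-mono-generated), the colimit cocone is jointly epimorphic, so equality of $a$ and $b$ after precomposing with every cocone leg forces $a=b$. Thus I would first record this reduction and then fix a $\lambda$-generated $C$ together with maps $a,b\colon C\to X$ satisfying $fa=fb$.

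The heart of the argument is a maneuver to bring the equation $fa=fb$ into contact with Definition~\ref{elem}, which is phrased only for \emph{monomorphisms} into $X$ — whereas $a$ and $b$ need not be mono. I would write $X$ as a $\lambda$-directed colimit of $\lambda$-generated subobjects $k_\alpha\colon X_\alpha\into X$; since $C$ is $\lambda$-generated and the colimit runs along monos, $\hom(C,-)$ preserves it, so both $a$ and $b$ factor through a common leg, say $a=k_\alpha a'$ and $b=k_\alpha b'$ with $a',b'\colon C\to X_\alpha$. Now $k_\alpha$ \emph{is} a mono with $\lambda$-generated domain, so the $\lambda$-embedding property supplies a span $X\xla{u}U\xra{v}Y$ in the witnessing $\lambda$-dense set together with $t\colon X_\alpha\to U$ satisfying $ut=k_\alpha$ and $vt=fk_\alpha$.

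The cancellation then runs as follows. From $vt=fk_\alpha$ I compute $vta'=fk_\alpha a'=fa=fb=fk_\alpha b'=vtb'$; because $v$ is a monomorphism (spans are mono-spans, which is exactly where that convention is used), I deduce $ta'=tb'$, and applying $u$ together with $ut=k_\alpha$ gives $a=k_\alpha a'=uta'=utb'=k_\alpha b'=b$. The one point requiring care — and the only genuinely nontrivial step — is the passage through $k_\alpha$ rather than through $a,b$ directly: Definition~\ref{elem} grants a compatible span only for a mono, so the argument hinges on replacing the arbitrary pair $a,b$ by the single mono $k_\alpha$ from a colimit presentation of $X$ and letting $a,b$ ride along inside its $\lambda$-generated domain. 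Everything else is formal, and no hypothesis beyond mono-generatedness of $\cA$ and the mono-span convention is needed.
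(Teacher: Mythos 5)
Your proof is correct and follows essentially the same route as the paper's: factor the test pair through a $\lambda$-generated subobject $X_\alpha\into X$ coming from a mono-generated presentation of $X$, apply Definition~\ref{elem} to the inclusion $k_\alpha$, and cancel using the monic legs of the resulting span. If anything, your opening reduction is slightly more careful, since you justify passing to arbitrary (not necessarily monic) test pairs with $\lambda$-generated domain via joint epimorphy of the colimit cocone, whereas the paper reduces directly to monomorphic test pairs; the cancellation step differs only in bookkeeping (you cancel $v$ and then apply $u$, the paper cancels the composite $vt$).
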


\begin{proof}
Let $f:X\to Y$ be a $\lambda$-embedding.  Since $\cA$ is $\lambda$-mono-generated, it suffices to show that for any two monomorphisms $x,y:G\into X$ with $\lambda$-generated $G$, $fx=fy$ implies that $x=y$.  Writing $X$ as a $\lambda$-directed colimit of $\lambda$-generated subobjects, we see that $x$ and $y$ factorize through $g:G'\into X$ for some $\lambda$-generated $G'$, i.e.\ there exist $x',y':G\to G'$ with $x=gx'$ and $y=gy'$. Consider
\[ \xymatrix{ X\ar[rr]^f && Y \\
& U \ar[lu]_u\ar[ru]^v \\
& G'\ar[luu]^g\ar[u]_t } \]
with $ut=g$ and $vt=fg$ as in Def.~\ref{elem}.  Then
\[   vtx'=fgx'=fx=fy=fgy'=vty'  \]
and, since $g$, $t$, and finally $vt$ is a monomorphism, $x'=y'$.  Thus $x=y$.
\end{proof}

This can be improved.  Recall that a monomorphism $f:X\to Y$ is $\lambda$-\textit{pure in $\cA_\mono$} if, given any commutative square
$$
\xymatrix@=2pc{
X \ar[r]^{f} & Y \\
A \ar [u]^u \ar [r]_{g} &
B \ar[u]_v
}
$$
in $\cA_\mono$ with $A$ and $B$ $\lambda$-generated, there is a mono $t:B\to X$ such that $tg=u$.

\begin{prop}
Let $\cA$ be a $\lambda$-mono-generated category. Then any $\lambda$-embedding is $\lambda$-pure in $\cA_\mono$.
\end{prop}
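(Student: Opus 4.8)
The plan is to feed the two monos bounding the square into the two different mechanisms available: use the $\lambda$-embedding property of $f$ (Def.~\ref{elem}) to cover the left edge $u\colon A\into X$ by a span in the witnessing $\lambda$-dense set $\cS$, and then use the \textsf{forth} clause of the density of $\cS$ (Def.~\ref{dense}) to extend that span so that it also absorbs the right edge $v\colon B\into Y$. The map $t$ will then be read off as a composite through the center of the extended span. Throughout, recall that by Lemma~\ref{embmono} the $\lambda$-embedding $f$ is a monomorphism, so that the given square, with $fu=vg$ and $A,B$ $\lambda$-generated, genuinely lives in $\cA_\mono$.

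First I would apply Def.~\ref{elem} to the mono $u\colon A\into X$ (legitimate since $A$ is $\lambda$-generated): this yields a span $X\xla{u_0}U_0\xra{v_0}Y\in\cS$ and a map $s\colon A\to U_0$ with $u_0 s=u$ and $v_0 s=fu$. Since $fu=vg$, the second identity reads $v_0 s=vg$. Next I would apply the \textsf{forth} clause of the $\lambda$-density of $\cS$ to the span $U_0$ together with the mono $v\colon B\into Y$ (legitimate since $B$ is $\lambda$-generated): this produces a span $X\xla{u_1}U_1\xra{v_1}Y\in\cS$ together with maps $b\colon B\to U_1$ and $c\colon U_0\to U_1$ satisfying $v_1 b=v$, $u_1 c=u_0$ and $v_1 c=v_0$. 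I then claim $t:=u_1 b\colon B\to X$ works. For the factorization $tg=u$, compute $v_1(bg)=vg=fu=v_0 s=v_1(cs)$; since $v_1$ is a monomorphism this forces $bg=cs$, whence $tg=u_1 bg=u_1 cs=u_0 s=u$. For the claim that $t$ is mono, note that $u_1$ is mono as a leg of a span, while $b$ is mono because $v_1 b=v$ is mono (a left factor of a mono is mono), so the composite $t=u_1 b$ is mono.

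I expect no serious obstacle; the content is entirely in the bookkeeping of which edge is handed to which mechanism, and in the single cancellation step where mono-ness of $v_1$ is exactly what identifies $bg$ with $cs$. One point worth flagging to avoid confusion: purity demands only $tg=u$, \emph{not} $ft=v$, and indeed the legs $u_1,v_1$ of the extending span are a priori unrelated to $f$, so no such identity is available or needed. The hypothesis that $\cA$ is $\lambda$-mono-generated enters only through Lemma~\ref{embmono} (to place the square in $\cA_\mono$); the rest is a formal diagram chase valid in any category using just the defining properties of $\lambda$-embeddings and of $\lambda$-dense sets of spans.
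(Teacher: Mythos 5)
Your argument is correct and follows essentially the same route as the paper: first use the $\lambda$-embedding property on $A\into X$ to land in a span $U_0$, then use the \textsf{forth} clause on $B\into Y$ to extend to a span $U_1$, and cancel the mono $v_1$ to get $tg=u$ with $t=u_1b$ (the paper's $u_1t_1$). Your explicit check that $t$ is mono is a small point the paper leaves implicit, but otherwise the two proofs coincide.
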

\begin{proof}
Let $f:X\to Y$ be a $\lambda$-embedding and consider a commutative square $fu=vg$
\[ \xymatrix{
& U_1\ar[dl]_{u_1}\ar[dr]^{v_1} \\
X \ar[rr]^(.3){f} && Y \\
A \ar[u]^u\ar[r]_{t_0} \ar@/_16pt/[rr]_g & U_0 \ar[lu]^{u_0}\ar[ru]_{v_0}\ar@{.>}[uu]_(.3)w & B\ar[u]_{v}\ar@/_45pt/[uul]_{t_1} } \]
where $A$ and $B$ are $\lambda$-generated.  Thus there exist a span $X\llla{u_0}U_0\llra{v_0}Y$ and a morphism $t_0:A\to U_0$ such that
$u_0t_0=u$ and $v_0t_0=fu=vg$.  There exist a span $X\llla{u_1}U_1\llra{v_1}Y$ and morphisms $w:U_0\to U_1$, $t_1:B\to U_1$ such that
$u_1w=u_0$, $v_1w=v_0$ and $v=v_1t_1$. Hence
\[  v_1t_1g=vg=fu=v_0t_0=v_1wt_0  \]
and, since $v_1$ is a monomorphism, $wt_0=t_1g$. Thus
\[ u=u_0t_0=u_1wt_0=u_1t_1g \, .  \]
Therefore $f$ is $\lambda$-pure.
\end{proof}

\begin{lemma}  \label{embediso}
Let $\cA$ be $\lambda$-mono-generated and let $f:X\to Y$ be a $\lambda$-embedding with $X$ and $Y$ $\lambda$-generated.  Then $f$ is an isomorphism.
\end{lemma}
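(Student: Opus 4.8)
The plan is to feed the identity map $\id_X$ into the defining property of a $\lambda$-embedding and then re-run the closing argument of Prop.~\ref{simiso} on the span it produces. Since $X$ is $\lambda$-generated, $\id_X\colon X\into X$ is an admissible test monomorphism in Def.~\ref{elem}. Applying the definition to it, I obtain a $\lambda$-dense set of spans $\cS$, a span $X\xla{u}U\xra{v}Y\in\cS$, and a map $t\colon X\to U$ with $ut=\id_X$ and $vt=f$. The first relation makes $u$ a split epimorphism; as the left leg of a span it is also a monomorphism, hence $u$ is an isomorphism and $t=u^{-1}$. Substituting into the second relation gives $f=vu^{-1}$, so the whole problem reduces to showing that the right leg $v\colon U\into Y$ is an isomorphism.

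To show $v$ is an isomorphism I would reuse the final part of the proof of Prop.~\ref{simiso}, which in fact applies to any span in a $\lambda$-dense family whose left leg is iso. Given an arbitrary monomorphism $G\into Y$ with $\lambda$-generated $G$, apply the \textsf{forth} clause of the $\lambda$-density of $\cS$ to the span $U$ and to $G\into Y$; this produces a span $X\linto V\into Y\in\cS$ together with a map $U\to V$ compatible with both legs. Because $u$ is iso and factors as $U\to V\to X$, the left leg $V\to X$ is a split epimorphism, hence, being a monomorphism, an isomorphism; consequently $U\to V$ is iso as well, and the given $G\into Y$ factors through $v$. Thus every $\lambda$-generated subobject of $Y$ factors through the monomorphism $v$, and since $\cA$ is $\lambda$-mono-generated this forces $v$ to be an isomorphism. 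Combined with $f=vu^{-1}$, this yields that $f$ is an isomorphism.

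The step I expect to be most delicate is conceptual rather than computational: one cannot simply invoke Prop.~\ref{simiso} as a black box. That proposition, together with the fact that a $\lambda$-embedding witnesses $X\sim_\lambda Y$, already yields an \emph{abstract} isomorphism $X\cong Y$; but this is strictly weaker than the claim, since a monomorphism between isomorphic $\lambda$-generated objects need not be invertible (for instance multiplication by $2$ on $\ZZ$, viewed in $\emb(\Sigma)$ for $\Sigma$ the language of groups). The real content is therefore to keep track of the \textbf{specific} map $f$, which is exactly what the extra data $t$ with $vt=f$ accomplishes: it identifies $f$ with the right leg $v$ of a span whose left leg is forced to be an isomorphism. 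Care must be taken that the span witnessing that $u$ is iso is the \emph{same} span whose right leg is shown to be iso, so that the two facts combine into a statement about $f$ itself rather than about two a priori unrelated morphisms.
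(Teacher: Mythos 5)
Your proof is correct and follows essentially the same route as the paper's: both feed $\id_X$ into Def.~\ref{elem} to produce a span $X\xla{u}U\xra{v}Y$ with $u$ a split epimorphism and a monomorphism, hence an isomorphism, thereby reducing the claim to the invertibility of the right leg $v$. The only divergence is in that last step --- the paper applies the \textsf{forth} clause to $\id_Y$ (exploiting that $Y$ is $\lambda$-generated) to make the new left and right legs split epis directly, whereas you re-run the closing argument of Prop.~\ref{simiso} over all $\lambda$-generated subobjects of $Y$; this also works, and incidentally shows the hypothesis on $Y$ is redundant, since $Y\cong U\cong X$ is then automatically $\lambda$-generated.
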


\begin{proof}
Let $\cS$ be a $\lambda$-dense set of spans between $X$ and $Y$ to which Def.~\ref{elem} applies.  Since $X$ is $\lambda$-generated and $f$ is a $\lambda$-embedding, there exists $X\llla{u_0}U_0\llra{v_0}Y\in\cS$ and $t:X\to U_0$ such that $u_0t=\id_X$ and $f=v_0t$.  Hence $u_0$ is an isomorphism and therefore $t$ is an isomorphism.
\[ \xymatrix{ & U_1 \ar[dl]_{u_1}\ar[dr]^{v_1} & Y \ar[d]^{\id_Y}\ar[l]_s \\
X\ar[rr]^(.3)f && Y \\
X \ar[u]^{\id_X}\ar[r]_t & U_0 \ar[lu]^{u_0}\ar[ru]_{v_0}\ar@{.>}[uu]_(.3)w } \]
By the ``forth'' property applied to $\id_Y$ and $X\llla{u_0}U_0\llra{v_0}Y$, there exists $X\llla{u_1}U_1\llra{v_1}Y\in\cS$, $w:U_0\to U_1$ and $s:Y\to U_1$ such that $u_1w=u_0$, $v_1w=v_0$ and $v_1s=\id_Y$.  Hence $v_1$ and $u_1$ are isomorphisms and therefore $w$ is an isomorphism.  Since $f=v_0t=v_1wt$, $f$ is an isomorphism.
\end{proof}

\begin{cor}
Let $f:X\to Y$ be a morphism in a mono-generated category.  $f$ is an isomorphism if and only if there are arbitrarily large regular $\lambda$ (equivalently, for all regular $\lambda$)\ $f$ is a $\lambda$-embedding.
\end{cor}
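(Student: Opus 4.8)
The statement to prove is a corollary characterizing isomorphisms in a mono-generated category as precisely those morphisms that are $\lambda$-embeddings for arbitrarily large (equivalently, all) regular $\lambda$. The plan is to prove the chain of implications directly, treating the three formulations (isomorphism; $\lambda$-embedding for all regular $\lambda$; $\lambda$-embedding for arbitrarily large regular $\lambda$) in a cycle, where only the passage from ``arbitrarily large $\lambda$'' back to ``isomorphism'' carries real content.

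First I would dispose of the easy directions. If $f:X\to Y$ is an isomorphism, then by Remark~\ref{re3.2}(3) it is a $\lambda$-embedding for \emph{every} regular $\lambda$, so in particular for all and for arbitrarily large $\lambda$. The implication ``$\lambda$-embedding for all regular $\lambda$'' $\Rightarrow$ ``$\lambda$-embedding for arbitrarily large regular $\lambda$'' is trivial. Thus the only substantive step is: if $f$ is a $\lambda$-embedding for arbitrarily large regular $\lambda$, then $f$ is an isomorphism.

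For that step the plan is to mimic the argument used in Cor.~\ref{isochar}, namely to raise the index of generation until $X$ and $Y$ become small enough to invoke Lemma~\ref{embediso}. Since $\cA$ is mono-generated, it is $\mu$-mono-generated for some regular $\mu$, and we may assume (using that $\cA$ has colimits of $\mu$-directed diagrams of monos, which is the hypothesis implicitly available for the ``raising'' machinery) to write both $X$ and $Y$ as $\mu$-directed colimits of $\mu$-generated subobjects. If these colimits have size bounded by some cardinal $\kappa$, then $X$ and $Y$ are both $\max\{\mu,\kappa^+\}$-generated. By \cite{MP}~2.3 (or \cite{AR}~2.11) there is a regular cardinal $\lambda$ with $\mu\triangleleft\lambda$ and $\kappa<\lambda$; by Prop.~\ref{raise} the category $\cA$ is then $\lambda$-mono-generated, and both $X$ and $Y$ are $\lambda$-generated. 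Because $f$ is a $\lambda$-embedding for arbitrarily large $\lambda$, we may choose such a $\lambda$ that is also large enough to witness the $\lambda$-embedding hypothesis for $f$. Lemma~\ref{embediso} then applies verbatim and yields that $f$ is an isomorphism.

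The main obstacle I anticipate is purely bookkeeping: ensuring that a \emph{single} regular cardinal $\lambda$ can be chosen simultaneously large enough (i) to make $X$ and $Y$ both $\lambda$-generated via the sharp-inequality construction, and (ii) to lie in the cofinal class of cardinals for which $f$ is a $\lambda$-embedding. Since the class of witnessing $\lambda$ is assumed cofinal in the regular cardinals, and the $\lambda$ produced by the raising argument can be taken as large as we like (one only needs $\mu\triangleleft\lambda$ and $\kappa<\lambda$, both of which persist upward), these two requirements are compatible, and the intersection is nonempty. The equivalence of ``arbitrarily large'' and ``all'' then follows formally, exactly as in the companion Cor.~\ref{isochar}, since both are shown equivalent to being an isomorphism.
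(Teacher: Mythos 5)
Your proposal is correct and follows essentially the same route as the paper, which proves this corollary by invoking Lemma~\ref{embediso} after running the index-raising argument of Cor.~\ref{isochar} (via Prop.~\ref{raise} and the cofinality of cardinals $\lambda$ with $\mu\triangleleft\lambda$) to make $X$ and $Y$ simultaneously $\lambda$-generated for a witnessing $\lambda$. Your explicit remark that the raising step tacitly needs colimits of $\mu$-directed diagrams of monos, as in Cor.~\ref{isochar}, is an accurate reading of the paper's intent.
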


This follows from Lemma~\ref{embediso} by an argument similar to that of Cor.~\ref{isochar}.

\begin{prop}  \label{compo}
In any $\lambda$-mono-generated category $\cA$, $\lambda$-embeddings are closed under composition.
\end{prop}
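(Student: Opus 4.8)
The plan is to witness that $hf$ is a $\lambda$-embedding using the $\star$-composite of the dense sets that witness $f$ and $h$, and to verify the condition of Def.~\ref{elem} by chaining the two ``back'' properties. Suppose $f:X\to Y$ and $h:Y\to Z$ are $\lambda$-embeddings. By Remark~\ref{re3.2}(1) I would first pass to a $\lambda$-dense set $\cS_{XY}$ of $\lambda$-spans realizing the embedding property of $f$, and a $\lambda$-dense set $\cS_{YZ}$ of $\lambda$-spans realizing that of $h$. I then set $\cS_{XZ}=\cS_{XY}\star\cS_{YZ}$; by Prop.~\ref{compose} this is again a $\lambda$-dense set of $\lambda$-spans between $X$ and $Z$, hence a legitimate candidate for the witnessing set in Def.~\ref{elem}.

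To check the embedding condition, let $g:G\into X$ be a monomorphism with $\lambda$-generated $G$. I would first apply the embedding property of $f$ to $g$, obtaining a span $X\llla{u}S\llra{v}Y\in\cS_{XY}$ and a map $t_1:G\to S$ with $ut_1=g$ and $vt_1=fg$. The key move is that the right leg $v:S\into Y$ is itself a monomorphism whose domain $S$ is $\lambda$-generated (this is precisely why we pass to $\lambda$-spans), so I may feed $v$ into the embedding property of $h$: this yields a span $Y\llla{p}T\llra{q}Z\in\cS_{YZ}$ and a map $t_2:S\to T$ with $pt_2=v$ and $qt_2=hv$. I then assemble the span $X\llla{u}S\llra{qt_2}Z$, whose center $S$ maps to the left constituent $S$ by $\id_S$ and to the right constituent $T$ by $t_2$. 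Taking $t=t_1$ completes the verification, since $ut_1=g$ and $(qt_2)t_1=h(vt_1)=h(fg)=(hf)g$, establishing the ``back'' case; the ``forth'' case is symmetric.

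The step I expect to cost the most care is confirming that the assembled span genuinely lies in $\cS_{XZ}$ as defined by $\star$. Here I would check three things: the left leg $u$ is mono by construction; the right leg $qt_2$ is mono because $pt_2=v$ mono forces $t_2$ to be mono and $q$ is mono; and the factorization diagram $(\dag)$ defining $\star$ commutes, since its only nontrivial constraint is $\id_S$ followed by $v$ equalling $t_2$ followed by $p$, which holds because $pt_2=v$. The one point that relies on the hypotheses rather than being purely formal bookkeeping is the middle step, where the $\lambda$-generatedness of the center $S$ is exactly what allows $v:S\into Y$ to be used as input to $h$'s embedding property; guaranteeing this is the whole reason for working with $\lambda$-spans via Remark~\ref{re3.2}(1), and I expect it to be the only real obstacle.
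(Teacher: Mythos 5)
Your proof is correct and takes essentially the same route as the paper's: both use $\cS_{XY}\star\cS_{YZ}$ as the witnessing $\lambda$-dense set and chain the two embedding properties. The only (immaterial) difference is that you feed the leg $v:S\into Y$ of the first span into the embedding property of $h$, yielding a composite span centered at $S$, whereas the paper feeds the composite mono $G\into S\into Y$ into it, yielding one centered at $G$ itself.
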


\begin{proof}
Let $\cS_{XY}$ resp.\ $\cS_{XZ}$ be $\lambda$-dense sets of $\lambda$-spans verifying that $f:X\to Y$ resp.\ $g:Y\to Z$ are $\lambda$-embeddings.  We claim that $\cS_{XY}\star\cS_{YZ}$, cf.\ Prop.~\ref{compose}, shows that $gf$ is a $\lambda$-embedding.  Consider a mono $x:G\to X$ with $\lambda$-generated $G$.  There is an $X\llla{s_1}S\llra{s_2}Y\in\cS_{XY}$ and $s:G\to S$ such that $s_1s=x$ and $s_2s=fx$.
\[ \xymatrix{ X \ar[rr]^f && Y \ar[rr]^g && Z \\
& S\ar[ul]_{s_1}\ar[ur]^{s_2} && T\ar[ul]_{t_1}\ar[ur]^{t_2}  \\
G \ar[uu]^x\ar[ur]^s\ar@/_3pt/[rrru]_t } \]
There is a $Y\llla{t_1}T\llra{t_2}Z\in\cS_{YZ}$ and $t:G\to T$ with $t_1t=s_2s$ and $t_2t=gs_2s$.  Note that $X\llla{x}G\llra{t_2t}Z\in\cS_{XY}\star\cS_{YZ}$ and the tautologous diagram with $\id_Gg=g$ and $gfx=gs_2s=t_2t$ shows that $gf$ is a $\lambda$-embedding.
\end{proof}

\begin{prop}
Let $\cA$ be a $\lambda$-mono-generated category and $g:Y\ra Z$ and $gf:X\ra Z$ be $\lambda$-embeddings.  Then $f:X\ra Y$ is a $\lambda$-embedding.
\end{prop}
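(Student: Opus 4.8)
The plan is to exhibit an explicit $\lambda$-dense set $\cS_{XY}$ of $\lambda$-spans between $X$ and $Y$ that witnesses $f$ as a $\lambda$-embedding. First I would record the structural preliminaries: since $gf$ is a $\lambda$-embedding it is a monomorphism by Lemma~\ref{embmono}, whence $f$ is mono; likewise $g$ is mono. Fix $\lambda$-dense sets of $\lambda$-spans $\cS_{XZ}$ and $\cS_{YZ}$ witnessing that $gf$ and $g$ are $\lambda$-embeddings (Remark~\ref{re3.2}(1)). The guiding idea is that a span should qualify exactly when it can be \emph{matched over $Z$}: declare a $\lambda$-span $X\linto U\into Y$ to lie in $\cS_{XY}$ iff there exist $X\linto A\into Z\in\cS_{XZ}$ and $Y\linto P\into Z\in\cS_{YZ}$, with legs $a,b$ and $p,q$ respectively, together with maps $\alpha\colon U\ra A$ and $\beta\colon U\ra P$ such that the left leg of $U$ equals $a\alpha$, its right leg equals $p\beta$, and $b\alpha=q\beta$. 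Thus $U$ maps compatibly into an ``$X$-side'' witness and a ``$Y$-side'' witness, the two agreeing on their common image in $Z$.

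The main work is to verify that $\cS_{XY}$ is $\lambda$-dense, and I expect this to be the crux. For the back clause, given $X\linto U\into Y\in\cS_{XY}$ with matching data $(A,P,\alpha,\beta)$ and a mono $G\into X$ with $G$ $\lambda$-generated, I would apply the back density of $\cS_{XZ}$ to $A$ and $G$ to obtain $X\linto A'\into Z\in\cS_{XZ}$ receiving compatible maps from $G$ and from $A$. The right leg $b'\colon A'\into Z$ has $\lambda$-generated domain, so I can apply the forth density of $\cS_{YZ}$ to $P$ and the subobject $b'$, obtaining $Y\linto P'\into Z\in\cS_{YZ}$ with a factorization $\delta\colon A'\ra P'$ of $b'$ through $q'$ and a span map $P\ra P'$. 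Taking $U':=A'$ with left leg $a'$ and right leg $p'\delta$ produces an element of $\cS_{XY}$ (the right leg is mono since $\delta$ is, as $q'\delta=b'$ is mono), and the connecting maps from $G$ and from $U$ are the evident composites. All requisite commutativities reduce, after cancelling the monomorphisms $a'$ and $q'$, to identities already available; in particular the relation $b\alpha=q\beta$ forces the two ways of mapping $U$ into $P'$ to coincide, giving the needed map $U\ra U'$. The forth clause is symmetric, interchanging $X$ with $Y$ and $\cS_{XZ}$ with $\cS_{YZ}$, and non-emptiness follows by lifting any span of $\cS_{XZ}$ to $Y$ exactly as in the back step.

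Finally, the embedding property of Def.~\ref{elem} is nearly immediate once this set-up is in place. Given a mono $x\colon G\into X$ with $G$ $\lambda$-generated, apply the embedding property of $gf$ to get $X\linto A\into Z\in\cS_{XZ}$ and $t_0\colon G\ra A$ with $at_0=x$ and $bt_0=gfx$; apply the embedding property of $g$ to the mono $fx\colon G\into Y$ to get $Y\linto P\into Z\in\cS_{YZ}$ and $s\colon G\ra P$ with $ps=fx$ and $qs=gfx$. Since $bt_0=gfx=qs$, the span $X\llla{x}G\llra{fx}Y$ lies in $\cS_{XY}$, matched by $(A,P,t_0,s)$, and $t:=\id_G$ witnesses the embedding condition because $x\cdot\id_G=x$ and $fx\cdot\id_G=fx$. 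Hence the only genuinely delicate point is the density verification above; the embedding clause is handed to us directly by the hypotheses on $gf$ and $g$ once the ``matched over $Z$'' set $\cS_{XY}$ has been constructed.
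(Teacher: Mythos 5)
Your proposal is correct and is essentially the paper's own argument: your ``matched over $Z$'' set is precisely the composite $\cS_{XZ}\star\cS_{ZY}$, whose $\lambda$-density the paper obtains by citing Prop.~\ref{compose} rather than re-deriving it by hand, and the embedding clause is verified the same way, producing a span centered on $G$ itself with $t=\id_G$. The only cosmetic difference is that you apply the embedding property of $g$ directly to the mono $fx:G\into Y$, whereas the paper applies it to $ft_1:T\ra Y$ for the $\lambda$-generated center $T$ supplied by the embedding property of $gf$; both yield the required commuting diamond over $Z$.
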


\begin{proof}
Let $\cS_{XZ}$ resp.\ $\cS_{YZ}$ be $\lambda$-dense sets of $\lambda$-spans verifying that $gf$ resp.\ $g$ are $\lambda$-embeddings.  We claim that the composite $\cS_{XZ}\star\cS_{ZY}$ verifies that $f$ is a $\lambda$-embedding.  (Here $\cS_{ZY}=\cS_{YZ}$.)  By Prop.~\ref{compose}, $\cS_{XZ}\star\cS_{ZY}$ is $\lambda$-dense between $X$ and $Y$.  Consider a mono $x:G\to X$ with $\lambda$-generated $X$.  There exists $X\llla{t_1}T\llra{t_2}Z\in\cS_{XZ}$ and $t:G\to T$ so that $x=t_1t$ and $gfx=t_2t$.  Since $T$ is $\lambda$-generated, applying the assumption that $g$ is a $\lambda$-embedding to $ft_1:T\ra Y$, there exist $Y\llla{s_1}S\llra{s_2}Z\in\cS_{YZ}$ and $s:T\to S$ such that $ft_1=s_1s$ and $gft_1=s_2s$. The inner diamond of the diagram
\[  \xymatrix{ X \ar[rr]_{gf}\ar@/^15pt/[rrrr]^f && Z && Y \ar[ll]^g \\
& T\ar[ul]^{t_1}\ar[ur]_{t_2}\ar[rr]_s && S\ar[ul]^{s_2}\ar[ur]_{s_1} \\
&& G \ar[ul]^{t}\ar[ur]_{st} }  \]
commutes because
\[  s_2st = gft_1t = gfx = t_2t \, .  \]
Thus $X\llla{t_1t}G\llra{s_1st}Y$ belongs to $\cS_{XY}$. Since $x=t_1t\id_G$ and $fx=ft_1t=s_1st$, we get that $f$ is a $\lambda$-embedding.
\end{proof}

The next three statements concern the behavior of $\lambda$-embeddings under functors.  Both the assertions and their proofs parallel the case of $\lambda$-equivalences, with an extra step needed to verify the embedding condition.

\begin{thm}  \label{emb-pres}
(a) Let $\cA$ be a $\lambda$-mono-generated category and $F:\cA\ra\B$ a functor preserving monomorphisms and $\lambda$-directed colimits of monomorphisms.  If the morphism $f\in\cA$ is a $\lambda$-embedding, so is $F(f)$.  (b) Let $\cA$ be a $\lambda$-mono-generated category, $T\in\forall_{\lambda\kappa}$, and $F:\cA\ra\Mod(T)$ a functor that takes $\lambda$-directed colimits of monomorphisms to colimits.  If the morphism $f\in\cA$ is a $\lambda$-embedding, so is $F(f)$.
\end{thm}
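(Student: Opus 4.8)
The plan is to reuse the very $\lambda$-dense sets constructed in Prop.~\ref{pres} and Thm.~\ref{eklof}, and to supply, for each small subobject of $F(X)$, the extra lift demanded by Def.~\ref{elem}. In both parts, let $\cS$ be a $\lambda$-dense set of $\lambda$-spans between $X$ and $Y$ witnessing that $f$ is a $\lambda$-embedding (Remark~\ref{re3.2}(1)), and write $X$ as a $\lambda$-directed colimit of $\lambda$-generated subobjects $X_\alpha$ along monomorphisms, with structure maps $i_\alpha:X_\alpha\into X$, which are mono by Lemma~\ref{monococone}. The key geometric input, available in both cases, is the embedding clause of Def.~\ref{elem} applied to $i_\alpha$: since $X_\alpha$ is $\lambda$-generated, there exist $X\llla{u}U\llra{v}Y\in\cS$ and $t:X_\alpha\ra U$ with $ut=i_\alpha$ and $vt=fi_\alpha$.

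For (a), the witnessing family is $F(\cS)=\{\,F(X)\llla{F(u)}F(U)\llra{F(v)}F(Y)\,\}$, whose legs are monos because $F$ preserves them, and which is $\lambda$-dense by the proof of Prop.~\ref{pres}. Given a monomorphism $h:G\into F(X)$ with $\lambda$-generated $G$, I first factor it through the colimit: since $F$ preserves monos and $\lambda$-directed colimits of monos, $F(X)=\colim F(X_\alpha)$ along the monomorphisms $F(i_\alpha)$, so $G$ being $\lambda$-generated forces $h=F(i_\alpha)\circ h_\alpha$ for some $\alpha$ and some $h_\alpha:G\ra F(X_\alpha)$. Applying the embedding clause to $i_\alpha$ as above and setting $t'=F(t)\circ h_\alpha$, functoriality gives $F(u)t'=F(i_\alpha)h_\alpha=h$ and $F(v)t'=F(f)F(i_\alpha)h_\alpha=F(f)h$, exactly the lift required of $F(f)$. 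Part (a) is thus a purely diagrammatic upgrade of Prop.~\ref{pres}, with no element manipulation.

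For (b), the witnessing family is the set $\cS_T$ of image factorizations $F(X)\linto W\into F(Y)$ of the spans $F(X)\la F(U)\ra F(Y)$, $X\la U\ra Y\in\cS$, which is $\lambda$-dense by Thm.~\ref{eklof}. Recall that for such a span there is an epimorphism $q:F(U)\epi W$ with $aq=F(u)$ and $bq=F(v)$, where $a:W\into F(X)$ and $b:W\into F(Y)$ are the legs (this uses Lemma~\ref{factor} to identify the two images). Given a monomorphism $G\into F(X)$ with $\lambda$-generated $G$, I first reduce to the case where $G\into F(X)$ is injective on underlying sets, by replacing $G$ with the image of the homomorphism and precomposing the eventual lift with the surjection onto that image — the reduction used in Thm.~\ref{eklof}. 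So I may regard $G$ as a substructure of $F(X)$ generated by a set $\uu$ of fewer than $\lambda$ elements. Choosing $\alpha$ so that the set-theoretic image of $F(i_\alpha)$ contains $\uu$ and applying the embedding clause to $i_\alpha$, I obtain $U\in\cS$ and $t:X_\alpha\ra U$ with $ut=i_\alpha$, $vt=fi_\alpha$, and I take $F(X)\linto W\into F(Y)$ to be the image factorization of $F(X)\la F(U)\ra F(Y)$.

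The lift is then the inclusion $t':G\inc W$. It is legitimate because $\uu\subseteq\im F(i_\alpha)=\im(F(u)F(t))\subseteq\im F(u)=\im a=W$ as substructures of $F(X)$, so $G=\langle\uu\rangle\subseteq W$, and automatically $at'$ is the inclusion $G\into F(X)$. The substance is the equation $bt'=F(f)\circ(G\into F(X))$. Setting $r=qF(t):F(X_\alpha)\ra W$, the relations $ar=F(i_\alpha)$ and $br=F(f)F(i_\alpha)$ give $b(r(x))=F(f)(a(r(x)))$ for every $x\in F(X_\alpha)$; since $\uu\subseteq\im(ar)=a(\im r)$ with $a$ injective, $\uu\subseteq\im r$, and as $\im r$ is a substructure of $W$ we get $G\subseteq\im r$. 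Hence $b$ and $F(f)\circ a$ agree on every element of $G$, which is the desired identity. The main obstacle is precisely this element-level verification: unlike part (a), $F$ need not preserve monos, so one cannot argue diagrammatically and must instead track generators through the image factorization, leaning on the universal-theory hypothesis (substructures of models are models, images of homomorphisms are substructures) exactly as in Lemma~\ref{factor}.
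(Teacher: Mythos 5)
Your proof is correct and takes essentially the same route as the paper: part (a) is verbatim the paper's argument (push the witnessing spans through $F$, factor $G$ through some $F(X_\alpha)$, and apply the embedding clause of Def.~\ref{elem} to the $\lambda$-generated subobject $X_\alpha$), and part (b) uses the same image-factorization machinery from Lemma~\ref{factor} and Thm.~\ref{eklof}. Your element-level verification in (b) --- reducing to $G$ a substructure generated by $\uu$, locating $\uu$ inside the image $W$, and checking $b=F(f)\circ a$ on $\im(qF(t))$ --- in fact supplies the details that the paper compresses into ``the argument is identical'' plus a diagram.
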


\begin{proof}
(a) Let $X\la U_i\ra Y$, $i\in I$, be a $\lambda$-dense set of spans verifying that $f:X\ra Y$ is a $\lambda$-embedding.  We claim that the set of spans $F(X)\la F(U_i)\ra F(Y)$, $i\in I$, shows $F(f)$ to be a $\lambda$-embedding.  Indeed, by Prop.~\ref{pres}, this set of spans is $\lambda$-dense between $F(X)$ and $F(Y)$.  Now let a mono $G\to F(X)$ with $\lambda$-generated $G$ be given.  Write $X$ as a colimit of a $\lambda$-directed diagram of $\lambda$-generated objects $X_\alpha$ and monomorphisms; thence $F(X)$ is the $\lambda$-directed colimit of the $F(X_\alpha)$ along monomorphisms.  Find an element $x_\alpha:X_\alpha\ra X$ of the cocone so that $G\ra F(X)$ factors as $G\ra F(X_\alpha)\llra{F(x_\alpha)}F(X)$.  Since $f$ is a $\lambda$-embedding, there exist $X\llla{u}U_i\llra{v}Y$ and $t:X_\alpha\to U_i$ such that $ut=x_\alpha$ and $vt=fx_\alpha$.  The $F$-image of this data
\[ \xymatrix{  F(X) \ar[rr]^{F(f)} && F(Y) \\
& F(U_i) \ar[ur]^(.4){F(v)}\ar[ul]_(.4){F(u)} \\
G \ar[uu]\ar[r] & F(X_\alpha) \ar[uul]^(.4){F(x_\alpha)}\ar[u]_{F(t)} } \]
verifies that $F(f)$ is a $\lambda$-embedding.  (b) The argument is identical, with Thm.~\ref{eklof} in place of Prop.~\ref{pres}, and the image factorizations of the $F(X)\la F(U_i)\ra F(Y)$ verifying that $F(f)$ is a $\lambda$-embedding.  The final diagram is modified to
\[ \xymatrix{  F(X) \ar[rr]^{F(f)} && F(Y) \\
& W\ar@{>->}[ul]\ar@{>->}[ur] \\
& F(U_i) \ar[uur]\ar[uul]\ar@{->>}[u] \\
G \ar[uuu]\ar[r] & F(X_\alpha) \ar[uuul]^(.4){F(x_\alpha)}\ar[u]_{F(t)} } \]
\end{proof}

The proof of the following corollary is analogous to that of Cor.~\ref{pres2}.

\begin{cor}
Let $\cA$ be a $\lambda$-mono-generated category with the property that directed colimits of monomorphisms exist and are mono.  Suppose the functor $F:\cA\ra\B$ preserves monomorphisms and $\lambda$-directed colimits of monomorphisms \textsl{or} $F$ takes $\lambda$-directed colimits of monos to colimits, and $\B$ is equivalent to $\Mod(T)$ for a $\forall_{\lambda\kappa}$ theory $T$.  Then for all regular $\mu\geq\lambda$, if $f\in\cA$ is a $\mu$-embedding then so is $F(f)$.
\end{cor}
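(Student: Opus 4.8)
The plan is to reduce the statement to Theorem~\ref{emb-pres} applied at index $\mu$ rather than $\lambda$, mirroring the way Cor.~\ref{pres2} reduces to Prop.~\ref{pres} and Thm.~\ref{eklof}. The only input that is not immediate is that $\cA$ stays mono-generated after raising the index from $\lambda$ to an arbitrary regular $\mu\geq\lambda$; the remaining hypotheses, on $F$ and on $T$, will be seen to persist automatically at the higher index.

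First I would establish that a $\lambda$-mono-generated category in which directed colimits of monomorphisms exist and are mono is in fact $\mu$-mono-generated for \emph{every} regular $\mu\geq\lambda$. This is exactly the lemma invoked in the proof of Cor.~\ref{pres2}, obtained by the argument analogous to Prop.~4.1 of Beke--Rosick\'{y}~\cite{BR}: one writes an object as a $\lambda$-directed colimit of $\lambda$-generated subobjects, forms the diagram of colimits of its $\lambda$-directed subdiagrams of size $<\mu$, and uses that directed colimits of monos exist and are mono to see that these appear as $\mu$-generated subobjects whose $\mu$-directed colimit recovers the object. Unlike Prop.~\ref{raise}, this step does not require $\lambda\triangleleft\mu$; the stronger colimit hypothesis is precisely what allows one to reach all regular $\mu\geq\lambda$.

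Next I would check that the hypotheses of Theorem~\ref{emb-pres} transfer to index $\mu$. Since $\mu\geq\lambda$, every $\mu$-directed poset is $\lambda$-directed, so a $\mu$-directed colimit of monomorphisms is in particular a $\lambda$-directed one; hence if $F$ preserves, respectively turns into colimits, $\lambda$-directed colimits of monos, it does the same for $\mu$-directed colimits of monos. Likewise, a conjunction of size $<\lambda$ has size $<\mu$, so $\forall_{\lambda\kappa}\subseteq\forall_{\mu\kappa}$ and hence $T\in\forall_{\mu\kappa}$, while $\mu\geq\lambda\geq\kappa$. Thus all hypotheses of Theorem~\ref{emb-pres}(a), in the mono-preserving case, or of~(b), in the $\Mod(T)$ case, hold with $\mu$ in place of $\lambda$.

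Applying Theorem~\ref{emb-pres} at index $\mu$ then yields that $F(f)$ is a $\mu$-embedding whenever $f$ is, which is the assertion. The main obstacle is the first step, the upgrade of mono-generatedness to every regular $\mu\geq\lambda$; but since this is the very lemma already needed for Cor.~\ref{pres2}, no genuinely new difficulty arises here beyond substituting Thm.~\ref{emb-pres} for Prop.~\ref{pres} and Thm.~\ref{eklof}.
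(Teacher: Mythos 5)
Your proposal is correct and follows exactly the route the paper intends: the paper's proof is simply the remark that the argument is analogous to Cor.~\ref{pres2}, i.e.\ one upgrades $\cA$ to being $\mu$-mono-generated for every regular $\mu\geq\lambda$ (using that directed colimits of monos exist and are mono) and then applies Thm.~\ref{emb-pres} at index $\mu$. Your additional checks that the hypotheses on $F$ and on $T$ persist at the higher index are the right details to supply and are carried out correctly.
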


The analogue of Prop.~\ref{reflect2}(a) obviously holds for $\lambda$-embeddings in the case of a \textsl{natural} isomorphism between $\id_\cA$ and $GF$ (all notation and assumptions being as in Prop.~\ref{reflect2}), and the analogue of Prop.~\ref{reflect2}(b)\ holds as well.  So does the analogue of Prop.~\ref{reflect}:

\begin{prop}
Let $\cA$ and $\B$ be $\lambda$-mono-generated categories and $F:\cA\ra\B$ a functor preserving monomorphisms and $\lambda$-directed colimits of monomorphisms.  Suppose also
\begin{itemize}
\item[(i)] $F$ is (equivalent to)\ the inclusion of a full subcategory
\item[(ii)] if $X$ is $\lambda$-generated, so is $F(X)$.
\end{itemize}
Then $f:X\ra Y\in\cA$ is a $\lambda$-embedding if and only if $F(f)$ is.
\end{prop}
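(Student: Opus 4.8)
The plan is to follow the proof of Prop.~\ref{reflect} and graft onto it one short observation handling the embedding clause. The ``only if'' direction needs no new work: if $f$ is a $\lambda$-embedding in $\cA$, then since $F$ preserves monomorphisms and $\lambda$-directed colimits of monomorphisms, Thm.~\ref{emb-pres}(a)\ shows that $F(f)$ is a $\lambda$-embedding in $\B$. So all the content lies in the ``if'' direction.

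For that direction, suppress $F$ and suppose that $f:X\ra Y$, regarded in $\B$, is a $\lambda$-embedding, witnessed by a $\lambda$-dense set $\cS_\B$ of $\lambda$-spans (legitimate by Remark~\ref{re3.2}(1)). I would first note that $f$ is a monomorphism in $\cA$: it is mono in $\B$ by Lemma~\ref{embmono}, and since $F$ is a full subcategory inclusion it is faithful, hence reflects monos. Next, I form the set $\cS_\cA$ exactly as in Prop.~\ref{reflect}: a span $X\linto A\into Y$ in $\cA$ lies in $\cS_\cA$ precisely when some $X\linto U\into Y\in\cS_\B$ admits a morphism of spans $A\ra U$ in $\B$. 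The density argument of Prop.~\ref{reflect} uses only that $\cS_\B$ is a $\lambda$-dense set of $\lambda$-spans, so it applies verbatim and yields that $\cS_\cA$ is $\lambda$-dense between $X$ and $Y$ in $\cA$.

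What remains is to verify the embedding clause of Def.~\ref{elem} for $\cS_\cA$. Given a monomorphism $g:G\into X$ with $G$ $\lambda$-generated in $\cA$, hypothesis (ii)\ and the preservation of monos by $F$ make $G$ $\lambda$-generated and $g$ mono in $\B$; so the embedding property of $\cS_\B$ produces $X\llla{u}U\llra{v}Y\in\cS_\B$ and $t:G\ra U$ with $ut=g$ and $vt=fg$. The crucial point is that this very $t$ certifies a membership: because $f$ is mono, $X\llla{g}G\llra{fg}Y$ is a bona fide span in $\cA$, and $t$ is exactly a morphism of spans from it into $U\in\cS_\B$, so $X\llla{g}G\llra{fg}Y\in\cS_\cA$. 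Choosing this span and $t'=\id_G:G\ra G$ gives $g\cdot\id_G=g$ and $fg\cdot\id_G=fg$, which is precisely the data required. Hence $f$ is a $\lambda$-embedding in $\cA$.

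I do not anticipate a genuine obstacle: the density of $\cS_\cA$ is inherited wholesale from Prop.~\ref{reflect}, and the one new ingredient --- observing that the $\B$-side embedding witness $t$ already deposits the tautological span $X\llla{g}G\llra{fg}Y$ into $\cS_\cA$, so that the $\cA$-side witness may be taken to be the identity on $G$ --- is immediate once isolated. The only points demanding care are the bookkeeping facts that $F$ reflects monos (so that $f$, and then $fg$, are mono in $\cA$) and that $\lambda$-generatedness and monicity of $g$ transfer from $\cA$ to $\B$.
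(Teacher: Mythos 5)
Your proof is correct, and it shares the paper's overall architecture --- the ``only if'' direction via Thm.~\ref{emb-pres}(a), the same set $\cS_\cA$ of spans admitting a span-morphism into $\cS_\B$, and the density of $\cS_\cA$ imported wholesale from Prop.~\ref{reflect} --- but your verification of the embedding clause is genuinely different from, and shorter than, the paper's. The paper does not use the tautological span: given the $\B$-witness $t:G\ra U$, it factors $u:U\ra X$ through a $\lambda$-generated subobject $X_i$ of $X$ lying in $\cA$, applies density of $\cS_\B$ once more to produce $W\in\cS_\B$ receiving $X_i$, and then exhibits $X\linto X_i\into Y\in\cS_\cA$ with witness map $G\ra U\ra X_i$. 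Your observation that $t$ itself already certifies $X\llla{g}G\llra{fg}Y\in\cS_\cA$ (legitimate because $f$ is mono in $\cA$: it is mono in $\B$ by Lemma~\ref{embmono} and $F$, being fully faithful, reflects monos), so that $\id_G$ serves as the $\cA$-side witness, eliminates that second appeal to density and the attendant diagram chase. Both arguments are valid; yours buys brevity, while the paper's has the mild aesthetic advantage that the witness spans it produces are centered on the canonical $\lambda$-generated subobjects $X_i$ used throughout the proof of Prop.~\ref{reflect}, which is not needed for the statement. Your bookkeeping (hypothesis (ii) and preservation of monos to transfer $g:G\into X$ to $\B$, and Remark~\ref{re3.2}(1) to take $\cS_\B$ to consist of $\lambda$-spans so that the density argument of Prop.~\ref{reflect} applies) is exactly the right set of checks.
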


\begin{proof}
Only the `if' direction is new.  We follow the conventions of the proof of Prop.~\ref{reflect}; let $\cS_\B$ be a $\lambda$-dense set of $\lambda$-spans between $X$ and $Y$ in $\B$ verifying that $F(f)$ is a $\lambda$-embedding, and retain the definition of $\cS_\cA$.  By the proof of Prop.~\ref{reflect}, $\cS_\cA$ is a $\lambda$-dense set of $\lambda$-spans between $X$ and $Y$ in $\cA$.  To verify the embedding condition, let a mono $g:G\ra X$, with $\lambda$-generated $G$, be given in $\cA$.  Since $F(f)$ is a $\lambda$-embedding, there exist $X\llla{u}U\llra{v}Y\in\cS_\B$ and $t:G\ra U$ such that $g=ut$ and $fg=vt$. Write $X$ as a $\lambda$-directed colimit of $\lambda$-generated subobjects $X_i$, $i\in I$, in $\cA$ (hence, retaining these properties in $\B$).  Let $i\in I$ be such that $u:U\ra X$ factors through $X_i\ra X$.  Apply density to $X_i\ra X$ and $X\llla{u}U\llra{v}Y$ to find a span $X\linto W\into Y\in\cS_\B$ with suitable maps $U\ra W$ and $X_i\ra W$.  In the diagram
\[ \xymatrix{
X \ar[rr]^f && Y \\
X_i\ar[u]\ar[r] & W\ar[ul]\ar[ur] \\
G\ar[r]_t\ar@/^5mm/[uu]^g & U\ar[ul]\ar[u]\ar[uur]_v } \]
the composites $U\ra W\ra X$ and $U\ra X_i\ra W\ra X$ both equal $u:U\ra X$.  Since $W\ra X$ is mono, $U\ra W$ equals $U\ra X_i\ra W$.  This implies, by diagram chase, that $G\ra U\ra X_i\ra W\ra Y$ equals $fg$.  Since $G\ra U\ra X_i\ra X$ equals $g$, $X\la X_i\ra W\ra Y\in\cS_\cA$ and $G\ra U\ra X_i$ solve the embedding problem as desired.
\end{proof}

\begin{thm}
Let $\lambda$ be a regular cardinal and $\Sigma$ be a $\lambda$-ary signature.  Then for an embedding $f:X\ra Y$ of $\Sigma$-structures the following are equivalent:
\begin{enumerate}
\item[(1)] $f$ is an $\LL_{\infty\lambda}$-elementary embedding
\item[(2)] there is a non-empty set $I$ of partial isomorphisms between $X$ and $Y$ satisfying the $<\lambda$-back-and-forth property, and such that for every subset $Z$ of $X$ of cardinality less than $\lambda$ there is $h\in I$ such that $f(z)=h(z)$ for every $z\in Z$
\item[(3)] $f$ is a $\lambda$-embedding in $\emb(\Sigma)$
\item[(4)] $f$ is a $\lambda$-embedding in $\str(\Sigma)$.
\end{enumerate}
\end{thm}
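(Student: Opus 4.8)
The plan is to prove the four conditions equivalent by closely paralleling the structure of Theorem~\ref{karp}, which handles the corresponding equivalences for $\sim_\lambda$ in place of $\lambda$-embeddings. The equivalence $(1)\biimp(2)$ is the embedding-refined version of Karp's theorem, and I would cite the same references (Nadel--Stavi~\cite{NS}, Dickmann~\cite{D}); the only new ingredient over Theorem~\ref{karp} is the extra clause in (2) pinning the back-and-forth system to agree with $f$ on every $<\lambda$-sized subset of $X$, which is exactly the model-theoretic shadow of the embedding condition in Def.~\ref{elem}. I would then establish the categorical implications $(2)\biimp(3)$ and $(3)\biimp(4)$ separately, reusing the identification (from the proof of Theorem~\ref{karp}) of partial isomorphisms between $X$ and $Y$ with spans in $\emb(\Sigma)$, and of the back-and-forth property with $\lambda$-density.

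For $(2)\biimp(3)$, having already matched the back-and-forth systems with $\lambda$-dense sets of spans in $\emb(\Sigma)$ via Theorem~\ref{karp}, I only need to reconcile the \emph{extra} clauses: the condition in (2) that for every $Z\subseteq X$ with $\card(Z)<\lambda$ some $h\in I$ satisfies $f(z)=h(z)$ on $Z$, against the embedding condition of Def.~\ref{elem} that every $\lambda$-generated mono $g:G\into X$ factors through some span compatibly with $f$. Given such a $G$, I take $Z$ to be a generating set of $<\lambda$ elements; the partial isomorphism $h$ agreeing with $f$ on $Z$ gives a subobject $U$ of $X$ containing the image of $Z$, and since $X\linto G$ is a substructure embedding in $\emb(\Sigma)$ the inclusion $Z\ra|U|$ extends to $t:G\ra U$, yielding $ut=g$ and $vt=fg$. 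Conversely, from a span solving the embedding problem for the substructure generated by $Z$, restricting to $Z$ recovers a partial isomorphism agreeing with $f$ on $Z$.

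For $(3)\biimp(4)$, the forward direction is immediate from Theorem~\ref{emb-pres}(a) applied to the inclusion $\emb(\Sigma)\inc\str(\Sigma)$, since that inclusion preserves monos and $\lambda$-directed colimits of monos. The reverse direction $(4)\dra(3)$ is the substantive one, and I would run it exactly as in the $(4)\dra(3)$ step of Theorem~\ref{karp}: starting from a $\lambda$-dense set $\cS$ of $\lambda$-spans in $\str(\Sigma)$ verifying that $f$ is a $\lambda$-embedding, pass to the set $\cS_\emptyset$ of image factorizations (which are spans in $\emb(\Sigma)$), using that the empty theory is basic universal so that the proof of Thm.~\ref{eklof} applies to the identity functor. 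Density of $\cS_\emptyset$ in $\emb(\Sigma)$ follows by the relation-stripping argument of Theorem~\ref{karp} ($G_0$ with empty relations is $\lambda$-generated in $\str(\Sigma)$); the additional task is to carry the embedding-witnessing datum through the image factorization, checking that the factored span still solves the embedding problem $ut=g$, $vt=fg$ for the given $\lambda$-generated $G\into X$ in $\emb(\Sigma)$.

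\textbf{The main obstacle} I anticipate is the reverse implication $(4)\dra(3)$, specifically verifying that image factorization is compatible with the embedding condition rather than merely with $\lambda$-density. In Theorem~\ref{karp} one only needs that a span in $\str(\Sigma)$ factorizes to a span in $\emb(\Sigma)$ and that the back-and-forth test survives; here one must additionally track the specific connecting map $t:G\ra U$ with $ut=g$ and $vt=fg$ through the factorization, confirming that the induced map into the image substructure $W$ still witnesses the embedding. This requires care because the map $G\ra U$ in $\str(\Sigma)$ need not be a substructure embedding, so one must restrict to $G_0$ (the relation-stripped version), factor, and then re-endow with the relations induced from $X$ to recover a genuine morphism in $\emb(\Sigma)$ that is compatible with $f$—essentially combining the relation-handling of Theorem~\ref{karp}'s $(4)\dra(3)$ with the embedding bookkeeping of Theorem~\ref{emb-pres}(b).
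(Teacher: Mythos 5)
Your proposal is correct and follows essentially the same route as the paper: Dickmann for $(1)\biimp(2)$, the identification of partial isomorphisms with spans in $\emb(\Sigma)$ for $(2)\biimp(3)$, Thm.~\ref{emb-pres}(a) for $(3)\dra(4)$, and image factorization via the empty (basic universal) theory plus the relation-stripped $G_0$ for $(4)\dra(3)$. The one point you leave implicit in $(2)\dra(3)$ --- that $vt=fg$ on all of $G$, not just on $Z$, because an embedding out of $G$ is determined by its values on the generating set $Z$ --- is exactly the fact the paper singles out there, and your handling of the embedding datum through the image factorization matches the paper's.
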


\begin{proof}
The equivalence of (1)\ and (2)\ is in Dickmann~\cite{D}; see the observation after 5.3.22. The equivalence of (2)\ and (3)\ is analogous to the corresponding equivalence in Thm.~\ref{karp}: one uses the fact that, for the subobject $G$ generated by $Z$, any embedding $G\to Y$ is uniquely determined by its restriction to $Z$.

(3)$\dra$(4) follows from Thm.~\ref{emb-pres}(a).

(4)$\dra$(3): let $\cS$ be a $\lambda$-dense set of $\lambda$-spans between $X$ and $Y$, witnessing that $f$ is a $\lambda$-embedding in $\str(\Sigma)$.  Define $\cS_\emptyset$ analogously to the proof of \ref{karp}(4)$\dra$(3); we claim it verifies that $f$ is a $\lambda$-embedding in $\emb(\Sigma)$.  Indeed, let $X\linto G\in\emb(\Sigma)$ be given, where $G$ is $\lambda$-generated in $\emb(\Sigma)$.  Let $G_0$ be defined as in \ref{karp}; since it is $\lambda$-generated as an object of $\str(\Sigma)$, it gives rise to the diagram
\[ \xymatrix{ X\ar[rr]^f && Y \\
& U \ar[lu]_u\ar[ru]^v \\
& G_0\ar[luu]^g\ar[u]_t } \]
in $\str(\Sigma)$, where $g$ is the composite $X\linto G\linto G_0$ and $X\llla{u}U\llra{v}Y\in\cS$, $ut=g$ and $vt=fg$.  Consider the image factorization
\[ \xymatrix{ X & V\ar@{>->}[l]\ar@{>->}[r] & Y \\
& U \ar[lu]\ar[ru]\ar@{>>}[u] } \]
where $X\linto V\into Y\in\cS_\emptyset$.  $X\linto G$, $X\linto V$ and $V\into Y$ are embeddings of $\Sigma$-structures, whence $t:G_0\to U$ composed with $U\to V$ can be extended to a $\Sigma$-embedding $G\into V$ that satisfies the desired commutativities in $\emb(\Sigma)$.
\end{proof}

\section{Elementary chains}
The Tarski-Vaught theorem states that the union of a chain of $\LL_{\omega\omega}$-elementary embeddings is an $\LL_{\omega\omega}$-elementary embedding.  Here we prove Thm.~\ref{ladder}, various consequences of which are analogues for $\lambda$-embeddings of the Tarski-Vaught theorem.  By the facts established in the previous section, these results do specialize to $\LL_{\infty\lambda}$-elementary embeddings of structures.  It would be interesting to handle Thm.~\ref{ladder} for the logics $\LL_{\kappa\lambda}$, or fragments of, via category-theoretic methods.

Throughout this section, fix a $\lambda$-mono-generated category $\cA$.  We assume that the colimits displayed exist in $\cA$.  For $\lambda$-equivalent objects $X$, $Y$, write $\cS^\ma_\lambda(X,Y)$ for the greatest $\lambda$-dense set of $\lambda$-spans between $X$ and $Y$, cf.\ Remark~\ref{max}(3).

\begin{lemma}  \label{step}
Suppose $X\la U\ra Y\in\cS^\ma_\lambda(X,Y)$ and $f:X\ra X_0$, $g:Y\ra Y_0$ are $\lambda$-embeddings.  Then the span
\[  X_0\llla{f}X\la U\ra Y\llra{g}Y_0   \]
belongs to $\cS^\ma_\lambda(X_0,Y_0)$.
\end{lemma}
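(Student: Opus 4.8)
The plan is to exhibit the displayed span as a $\star$-composite of three $\lambda$-dense sets of $\lambda$-spans, arranged so that the composite can be taken to have center $U$ itself; membership in $\cS^\ma_\lambda(X_0,Y_0)$ then follows because every $\lambda$-dense set of $\lambda$-spans is contained in the greatest one (Remark~\ref{max}(3)).

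First I would record that $f$ and $g$ are monomorphisms by Lemma~\ref{embmono}, so the two legs $U\to X\xra{f}X_0$ and $U\to Y\xra{g}Y_0$ are composites of monos; since $U$ is $\lambda$-generated (being the center of a member of $\cS^\ma_\lambda(X,Y)$), the displayed diagram is indeed a $\lambda$-span. By Remark~\ref{re3.2}(1) I would fix $\lambda$-dense sets of $\lambda$-spans $\cS_f$ (between $X$ and $X_0$) and $\cS_g$ (between $Y$ and $Y_0$) witnessing that $f$ and $g$ are $\lambda$-embeddings, and write $\cS_f^{\mathrm{op}}$ for the set of reversed spans of $\cS_f$. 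This $\cS_f^{\mathrm{op}}$ is a $\lambda$-dense set of $\lambda$-spans between $X_0$ and $X$, density being symmetric under interchanging the two legs (Remark~\ref{max}(1)).

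I would then set $\cT=\cS_f^{\mathrm{op}}\star\cS^\ma_\lambda(X,Y)\star\cS_g$. Two applications of Prop~\ref{compose} show that $\cT$ is a $\lambda$-dense set of $\lambda$-spans between $X_0$ and $Y_0$; in particular $X_0\sim_\lambda Y_0$, so $\cS^\ma_\lambda(X_0,Y_0)$ is defined and contains $\cT$. It remains to check that the displayed span actually belongs to $\cT$. Applying the embedding property of $f$ to the mono $U\into X$ yields $X\llla{u}P\llra{v}X_0\in\cS_f$ and $t:U\to P$ with $ut=(U\to X)$ and $vt=f\circ(U\to X)$; symmetrically, the embedding property of $g$ applied to $U\into Y$ yields $Y\llla{u'}Q\llra{v'}Y_0\in\cS_g$ and $t':U\to Q$ with $u't'=(U\to Y)$ and $v't'=g\circ(U\to Y)$.

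The main (and only delicate) step is to assemble the $\star$-factorization with $U$ as center. Composing the reversed span $X_0\llla{v}P\llra{u}X$ with the middle span $X\la U\ra Y$ over $X$, using the connecting maps $t:U\to P$ and $\id_U$, gives a span $X_0\la U\ra Y$ in $\cS_f^{\mathrm{op}}\star\cS^\ma_\lambda(X,Y)$ whose left leg is $vt=f\circ(U\to X)$; the matching equality at $X$ is exactly $ut=(U\to X)$. Composing this in turn with $Y\llla{u'}Q\llra{v'}Y_0$ over $Y$, via $\id_U$ and $t':U\to Q$, produces a span whose left leg is $f\circ(U\to X)$ and whose right leg is $v't'=g\circ(U\to Y)$, the matching equality at $Y$ being $u't'=(U\to Y)$. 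This composite is precisely the displayed span, so it lies in $\cT\subseteq\cS^\ma_\lambda(X_0,Y_0)$, as required. The monomorphism hypotheses are used exactly to read off the two matching equalities, just as in the proofs of Prop~\ref{compose} and Prop~\ref{compo}; beyond that the argument is pure arrow-bookkeeping.
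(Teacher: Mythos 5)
Your proof is correct and rests on the same two ingredients as the paper's: apply the embedding property of $f$ and $g$ directly to the $\lambda$-generated center $U$, then invoke Prop.~\ref{compose} together with the maximality of $\cS^\ma_\lambda$. The only difference is packaging --- you form a single three-fold composite $\cS_f^{\mathrm{op}}\star\cS^\ma_\lambda(X,Y)\star\cS_g$ with a reversed span-set, whereas the paper performs the same one-sided extension step twice in succession --- so this is essentially the paper's argument.
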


Indeed, since $g$ is a $\lambda$-embedding and $U$ is $\lambda$-generated, there exists a $Y\la V\ra Y_0\in\cS^\ma_\lambda(Y,Y_0)$ and $t:U\ra V$ such that in
\[ \xymatrix{ X & Y\ar[rr]^g && Y_0 \\
&& V \ar[lu]_y\ar[ru]^z \\
& U\ar[uu]^u\ar[luu]\ar[ru]_t } \]
$u=yt$ and $gu=zt$.  By Prop.~\ref{compose},
\[  X\la U\ra Y\ra Y_0 = X\la U\ra V\ra Y_0\in\cS^\ma_\lambda(X,Y)\star\cS^\ma_\lambda(Y,Y_0)\subseteq \cS^\ma_\lambda(X,Y_0) \,. \]
A symmetric argument establishes that $X_0\la U\ra Y\in\cS^\ma_\lambda(X_0,Y)$, and the conclusion follows.

\begin{thm}  \label{ladder}
Let $\D$ be a $\lambda$-directed diagram and $F_1,F_2:\D\to\cA$ two functors sending all maps to $\lambda$-embeddings.  Let $\eta:F_1\ra F_2$ be a natural transformation such that $\eta(d):F_1(d)\ra F_2(d)$ is a $\lambda$-embedding for each $d$ in $\D$.  Then the induced $f:\colim F_1\ra\colim F_2$ is a $\lambda$-embedding.
\end{thm}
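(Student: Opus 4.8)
The plan is to build, directly from the vertexwise witnesses, a single $\lambda$-dense set of $\lambda$-spans between $X:=\colim F_1$ and $Y:=\colim F_2$ satisfying the embedding clause of Def.~\ref{elem}. Write $(k^1_d\colon F_1(d)\to X)_d$ and $(k^2_d\colon F_2(d)\to Y)_d$ for the colimit cocones; their components are monos by Lemma~\ref{monococone}$(i)$, and $f$ is the unique map with $fk^1_d=k^2_d\,\eta(d)$, well defined because naturality of $\eta$ makes $(k^2_d\eta(d))_d$ a cocone on $F_1$. Each connecting map of $F_1$ and $F_2$, and each $\eta(d)$, is a $\lambda$-embedding, hence a monomorphism by Lemma~\ref{embmono}; in particular $F_1(d)\sim_\lambda F_2(d)$ for every $d$, so the greatest $\lambda$-dense set $\cS^\ma_\lambda(F_1(d),F_2(d))$ of $\lambda$-spans is available. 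I define
\[ \cS=\big\{\,X\xleftarrow{\,k^1_d\ell\,}U\xrightarrow{\,k^2_d r\,}Y \;\big|\; d\in\D,\ F_1(d)\xleftarrow{\ell}U\xrightarrow{r}F_2(d)\in\cS^\ma_\lambda(F_1(d),F_2(d))\,\big\}, \]
the pushforward of the vertexwise witnesses along the colimit legs; each member is a $\lambda$-span since $U$ is $\lambda$-generated, and $\cS$ is nonempty.

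The engine of the argument is a device for relocating a span up the diagram. Given a span in $\cS$ coming from a vertex $d$, together with a mono $G\into X$ with $\lambda$-generated $G$, I factor $G\into X$ through some leg $k^1_e$ (possible because $X$ is the $\lambda$-directed colimit of the $F_1(e)$ and $G$ is $\lambda$-generated), choose $d'\geq d,e$ by $\lambda$-directedness, and transport the span $U$ from level $d$ to level $d'$ along the connecting $\lambda$-embeddings $\phi^i_{dd'}\colon F_i(d)\to F_i(d')$. The crucial point is that Lemma~\ref{step}, applied with these two $\lambda$-embeddings, guarantees the transported span still lies in $\cS^\ma_\lambda(F_1(d'),F_2(d'))$, while cocone compatibility $k^1_{d'}\phi^1_{dd'}=k^1_d$ (and its $F_2$-analogue) guarantees that its pushforward to $X,Y$ is exactly the original span in $\cS$. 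Thus, without leaving $\cS$, I may always assume that a given span and a given $\lambda$-generated subobject $G\into X$ are tracked at a common vertex $d'$, with $G\into F_1(d')$ a mono.

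With everything gathered at $d'$, the two density clauses follow by invoking density of $\cS^\ma_\lambda(F_1(d'),F_2(d'))$: the back clause applied to $G\into F_1(d')$ yields an enlarged $U''\in\cS^\ma_\lambda(F_1(d'),F_2(d'))$ with a span morphism $U\to U''$ and a map $G\to U''$ over $F_1(d')$, and pushing $U''$ forward along $k^1_{d'},k^2_{d'}$ produces the required member of $\cS$ together with the required morphisms; the forth clause is symmetric. For the embedding clause, given a mono $g\colon G\into X$ I factor it as $g=k^1_e g_0$ through a single vertex $e$ and apply the embedding property of $\eta(e)$ to $g_0\colon G\into F_1(e)$: this yields $F_1(e)\xleftarrow{\ell}U\xrightarrow{r}F_2(e)\in\cS^\ma_\lambda(F_1(e),F_2(e))$ and $t\colon G\to U$ with $\ell t=g_0$ and $rt=\eta(e)g_0$. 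Pushing forward, $u:=k^1_e\ell$ and $v:=k^2_e r$ give a span in $\cS$ with $ut=k^1_eg_0=g$ and, using $fk^1_e=k^2_e\eta(e)$, with $vt=k^2_e\eta(e)g_0=fg$, exactly as Def.~\ref{elem} demands.

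I expect the only real obstacle to be the bookkeeping in the relocation step: one must check that transporting a span to a higher vertex leaves its image in $X,Y$ unchanged and carries span morphisms to span morphisms, so that the back-and-forth data obtained at level $d'$ descends to honest data in $\cS$. Lemma~\ref{step} is precisely what keeps the transported spans inside the greatest dense set, and all remaining commutativities are forced by the cocone identities and the monomorphy of the legs; crucially, none of them require the cocone maps $k^i_d$ themselves to be $\lambda$-embeddings, which is what lets the proof go through from the vertexwise hypotheses alone.
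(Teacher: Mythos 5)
Your proposal is correct and follows essentially the same route as the paper's proof: the same set $\cS$ of spans obtained by pushing the vertexwise greatest dense sets forward along the (mono) colimit legs, the same relocation of a span and a $\lambda$-generated subobject to a common upper vertex via Lemma~\ref{step} before invoking density there, and the same one-vertex verification of the embedding clause using $\eta(e)$ and the cocone identity $fk^1_e=k^2_e\eta(e)$. The only cosmetic difference is that you spell out explicitly why the transported span has the same pushforward and why $F_1(d)\sim_\lambda F_2(d)$ holds at each vertex, points the paper leaves implicit.
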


\begin{proof}
Let $\cS$ be the set of $\lambda$-spans between $\colim F_1$ and $\colim F_2$ that can be factored as
\[  \colim F_1 \llla{k^{(1)}_d} F_1(d) \la U \ra F_2(d) \llra{k^{(2)}_d}\colim F_2  \]
where $d\in\D$, $F_1(d) \la U \ra F_2(d)\in\cS^\ma_\lambda(F_1(d),F_2(d))$ and $k^{(i)}_d$ are parts of the colimit cocone.  We claim that $\cS$ is $\lambda$-dense.  $\cS$ is non-empty since all $k^{(i)}_d$ are monomorphisms.  Fix such an element of $\cS$ and let a mono $g:G\ra\colim F_1$, with $\lambda$-generated $G$, be given.  There exists $d'\in\D$ such that $g$ factors through $k^{(1)}_{d'}:F_1(d')\ra\colim F_1$.  Let $d''\in\D$ be such that $d\ra d''$ and $d'\ra d''$ both exist in $\D$.  In the diagram
\[ \xymatrix{ \colim F_1 &&& \colim F_2 \\
F_1(d'')\ar[u] && V\ar[ll]\ar[r] & F_2(d'')\ar[u] \\
& F_1(d')\ar[ul] & G\ar[l]\ar[u]\ar@{.>}[lluu]_(.6)g  \\
F_1(d)\ar[uu] && U\ar[ll]\ar[r]\ar@/_5mm/[uu] & F_2(d)\ar[uu] } \]
$F_1(d)\ra F_1(d'')$ and $F_2(d)\ra F_2(d'')$ are $\lambda$-embeddings by assumption.  By Lemma~\ref{step},
\[  F_1(d'') \la F_1(d) \la U \ra F_2(d) \ra F_2(d'')  \]
belongs to $\cS^\ma_\lambda(F_1(d''),F_2(d''))$.  Thence $F_1(d'')\la V\ra F_2(d'')\in\cS^\ma_\lambda(F_1(d''),F_2(d''))$ with suitable $G\ra V$ and $U\ra V$ exist.  But that solves the ``back'' direction for $\cS$.  The ``forth'' case is symmetric.

Now we verify the embedding condition.  Given a mono $g:G\ra\colim F_1$ with $\lambda$-generated, let $d\in\D$ be such that $g$ factors as $G\llra{g_0}F_1(d)\llra{k^{(1)}_d}\colim F_1$.  Since $\eta(d)$ is a $\lambda$-embedding, there exist $F_1(d)\la U\ra F_2(d)\in\cS^\ma_\lambda(F_1(d),F_2(d))$ and map $t:G\ra U$
\[ \xymatrix{ \colim F_1\ar[rr]^f && \colim F_2 \\
F_1(d)\ar[u]\ar[rr]^{\eta(d)} && F_2(d)\ar[u] \\
& U\ar[ul]_u\ar[ur]^v  \\
& G\ar[uul]^{g_0}\ar[u]_t } \]
such that $ut=g_0$ and $\eta(d)g_0=vt$.  But $\colim F_1\la F_1(d)\la V\ra F_2(d)\ra\colim F_2$ belongs to $\cS$ by definition, verifying the embedding condition.
\end{proof}

Let $\lambda$-$\emb(\cA)$ be the subcategory of $\cA$ with the same objects, but morphisms the $\lambda$-embeddings of $\cA$.

\begin{cor}   \label{chain}
$(i)$ Let $\D$ be a $\lambda$-directed diagram, and $F:\D\ra\cA$ a functor such that $F(d\ra d')$ is a $\lambda$-embedding for every $d\ra d'$ in $\D$.  Then the colimit cocone of $F$ consists of $\lambda$-embeddings.  $(ii)$ If $\cA$ has $\lambda$-directed colimits of monos, so does $\lambda$-$\emb(\cA)$, created by the inclusion $\lambda$-$\emb(\cA)\inc\cA$.
\end{cor}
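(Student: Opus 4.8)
The plan is to derive both parts from Theorem~\ref{ladder} by letting one of its two functors be \emph{constant}, so that the map it produces can be identified with the cocone component (for $(i)$) or with the mediating map (for $(ii)$).

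For $(i)$, fix $d_0\in\D$ and pass to the subdiagram $\D_{\geq d_0}$ consisting of those $d$ admitting a map $d_0\ra d$. Since $\D$ is $\lambda$-directed and $\lambda$ is infinite, any family of size $<\lambda$ in $\D_{\geq d_0}$ has an upper bound that can be taken to dominate $d_0$ as well, so $\D_{\geq d_0}$ is again $\lambda$-directed; it is moreover cofinal in $\D$, since an upper bound of $\{d,d_0\}$ lies in $\D_{\geq d_0}$ and receives a map from $d$. Hence $\colim F$ may be computed over $\D_{\geq d_0}$. Now apply Theorem~\ref{ladder} over $\D_{\geq d_0}$ with $F_1$ the constant functor at $F(d_0)$ (whose structure maps are identities, hence $\lambda$-embeddings by Remark~\ref{re3.2}(3)), with $F_2=F$, and with $\eta(d)=F(d_0\ra d)$; naturality of $\eta$ is just functoriality of $F$, and each $\eta(d)$ is a $\lambda$-embedding by hypothesis. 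The colimit of the constant functor $F_1$ over the nonempty connected diagram $\D_{\geq d_0}$ is $F(d_0)$ with all legs the identity, so the induced map $F(d_0)\ra\colim F$ is the unique map satisfying $f=k_d\circ F(d_0\ra d)=k_{d_0}$ for every $d$. Theorem~\ref{ladder} therefore asserts that $k_{d_0}$ is a $\lambda$-embedding; as $d_0$ was arbitrary, the entire colimit cocone consists of $\lambda$-embeddings.

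For $(ii)$, first recall from Lemma~\ref{embmono} that every morphism of $\lambda$-$\emb(\cA)$ is a monomorphism, so a $\lambda$-directed diagram in $\lambda$-$\emb(\cA)$ is automatically a diagram of monos, and its colimit in $\cA$ exists by hypothesis. Let $C=\colim F$ with cocone $k_d\colon F(d)\ra C$. Part $(i)$ shows each $k_d$ is a $\lambda$-embedding, so the cocone lifts to $\lambda$-$\emb(\cA)$; it remains to verify its universal property there. Given any competing cocone of $\lambda$-embeddings $e_d\colon F(d)\ra E$, the universal property in $\cA$ furnishes a unique $u\colon C\ra E$ with $u k_d=e_d$, and since the inclusion is faithful this is the only possible mediating map in $\lambda$-$\emb(\cA)$; we must check that $u$ is a $\lambda$-embedding. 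For this, apply Theorem~\ref{ladder} with $F_1=F$, with $F_2$ the constant functor at $E$, and with $\eta(d)=e_d$, naturality being precisely the cocone compatibility $e_{d'}\circ F(d\ra d')=e_d$. Here $\colim F_2=E$ with identity legs, and the induced map is characterized by precomposition with $k_d$ returning $e_d$, so it coincides with $u$. Theorem~\ref{ladder} then makes $u$ a $\lambda$-embedding, establishing that $C$ with the lifted cocone is the colimit in $\lambda$-$\emb(\cA)$ and that the inclusion into $\cA$ creates it.

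I expect no genuinely new difficulty beyond Theorem~\ref{ladder} itself: the content of the corollary is the recognition that letting either $F_1$ or $F_2$ be constant specializes that theorem into a statement about cocone legs, respectively about mediating maps. The only point requiring care—the step most likely to hide an error if done carelessly—is the bookkeeping that identifies the abstract ``induced map'' of Theorem~\ref{ladder} with the concrete map of interest ($k_{d_0}$, respectively $u$); this rests solely on matching the two universal properties, once one has computed the colimit of the relevant constant functor over a $\lambda$-directed, hence nonempty and connected, diagram, together with the cofinality of $\D_{\geq d_0}$ used in part $(i)$.
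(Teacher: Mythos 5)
Your proposal is correct and follows essentially the same route as the paper: both parts are obtained from Theorem~\ref{ladder} by taking one of the two functors to be constant (at $F(d_0)$ over the cofinal subdiagram $\D_{\geq d_0}$ for $(i)$, and at the competing cocone's target for $(ii)$), and identifying the induced map with the cocone leg, respectively the mediating map. Your added checks (cofinality of $\D_{\geq d_0}$, and that morphisms of $\lambda$-$\emb(\cA)$ are monos via Lemma~\ref{embmono}) are details the paper leaves implicit.
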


\begin{proof}
$(i)$ Pick any $d\in\D$ and let $\D_d$ be the full subdiagram of $\D$ consisting of objects $d'$ such that $d\ra d'$ exists in $\D$.  Apply Thm.~\ref{ladder} with the role of $F_1:\D_d\ra\cA$ played by the constant functor at $F(d)$, $F_2$ being the restriction of $F$ to $\D_d$, and $\eta(d')=F(d\ra d')$.  $(ii)$ The composite $F:\D\ra\lambda$-$\emb(\cA)\inc\cA$ has a colimiting cocone lying in $\lambda$-$\emb(\cA)$.  But this cocone is colimiting in $\emb(\cA)\inc\cA$ as well.  That is, if $\{F(d)\ra X\;|\;d\in\D\}$ is a cocone on $F$ lying in $\lambda$-$\emb(\cA)$, then the induced map $\colim F\ra X$ is a $\lambda$-embedding.  Just apply Thm.~\ref{ladder} with $F_1=F$ and $F_2$ the constant functor at $X$.
\end{proof}

As usual, we'll say ``finitely mono-generated'' instead of ``$\omega$-mono-generated'', and ``finitary embedding'' instead of ``$\omega$-embedding''.  The next corollary states that finitary embeddings are closed under transfinite compositions, i.e.\ smooth chains.

\begin{cor}
Let $\cA$ be a finitely mono-generated category and $\alpha$ an ordinal.  Let $F:\alpha\ra\cA$ be a smooth diagram such that for all $\beta\prec\alpha$, $F(\beta)\ra F(\beta+1)$ is a finitary embedding.  Then $F(0)\ra\colim F$ is a finitary embedding.
\end{cor}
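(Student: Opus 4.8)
The plan is to run a transfinite induction establishing that \emph{every} transition map $F(\beta)\ra F(\gamma)$ of the chain is a finitary embedding, and then to read the map to the colimit off Cor.~\ref{chain}$(i)$. The two tools are that finitary embeddings compose (Prop.~\ref{compo}) and that the colimit cocone of an $\omega$-directed diagram of finitary embeddings again consists of finitary embeddings (Cor.~\ref{chain}$(i)$, the $\lambda=\omega$ instance). The one structural fact to keep in mind is that a chain, being linearly ordered, is $\omega$-directed, since every finite subset has a largest element.

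First I would prove, by transfinite induction on $\gamma<\alpha$, the claim that $F(\beta)\ra F(\gamma)$ is a finitary embedding for all $\beta\leq\gamma$. When $\beta=\gamma$ the map is the identity, a finitary embedding by Remark~\ref{re3.2}$(3)$. For a successor $\gamma=\delta+1$ and $\beta\leq\delta$, the map $F(\beta)\ra F(\delta+1)$ factors as $F(\beta)\ra F(\delta)\ra F(\delta+1)$; the first factor is a finitary embedding by the inductive hypothesis and the second by hypothesis, so the composite is one by Prop.~\ref{compo}. For a limit ordinal $\gamma$, smoothness of the diagram gives $F(\gamma)=\colim_{\delta<\gamma}F(\delta)$; the index set $[0,\gamma)$ is an $\omega$-directed chain all of whose transition maps are finitary embeddings by the inductive hypothesis, so Cor.~\ref{chain}$(i)$ shows that its colimit cocone consists of finitary embeddings, and in particular $F(\beta)\ra F(\gamma)$ is one.

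Then I would conclude by cases on $\alpha$. If $\alpha=\delta+1$ is a successor, the index category $[0,\delta]$ has a terminal object, so $\colim F=F(\delta)$ and the desired map $F(0)\ra\colim F$ is just $F(0)\ra F(\delta)$, a finitary embedding by the claim (applied with $\gamma=\delta<\alpha$). If $\alpha$ is a limit, then $\colim F$ is the colimit of the $\omega$-directed chain $[0,\alpha)$, every transition map of which is a finitary embedding by the claim; one final application of Cor.~\ref{chain}$(i)$ gives that the colimit cocone consists of finitary embeddings, so its component $F(0)\ra\colim F$ is a finitary embedding. The degenerate cases $\alpha=0$ (no $F(0)$) and $\alpha=1$ (the map is an identity) are trivial.

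I expect no genuine obstacle here: the argument is purely formal, its content living entirely in Prop.~\ref{compo} and Cor.~\ref{chain}$(i)$. The only point demanding care is the bookkeeping of the transfinite induction at limit stages --- making sure the inductive hypothesis really supplies that \emph{all} transition maps of the truncated chain $[0,\gamma)$ are finitary embeddings before invoking Cor.~\ref{chain}$(i)$ --- together with the observation that chains are $\omega$-directed, which is exactly what licenses the use of that corollary.
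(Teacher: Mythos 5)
Your proof is correct and follows essentially the same route as the paper, which simply states that the result follows by transfinite induction on $\alpha$ using Prop.~\ref{compo} at successor ordinals and Cor.~\ref{chain}$(i)$ at limit ordinals. You have merely spelled out the bookkeeping (in particular, strengthening the inductive statement to cover all transition maps, which is indeed what the limit-stage application of Cor.~\ref{chain}$(i)$ requires), so there is nothing to add.
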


This follows by transfinite induction on $\alpha$, using Prop.~\ref{compo} at successor ordinals and Cor.~\ref{chain}$(i)$ at limit ordinals.

If one replaces $\lambda$-embeddings by $\lambda$-equivalences, the above corollary can certainly fail for uncountable $\lambda$: let $\D$ be the ordered set of countable ordinals $[\omega_0,\omega_1)$ and consider the functor $F:\D\ra\Set$ with $F(\alpha)=\alpha$.  Then $F(\alpha)\sim_{\omega_1}F(\beta)$ for $\alpha,\beta\in\D$, but $\omega_0\not\sim_{\omega_1}\omega_1=\colim F$.  Thm.~\ref{ladder} nonetheless entails an (easy)\ analogue for $\sim_\lambda$.

\begin{cor}
Let $\D$ be a $\lambda$-directed diagram and $F_1,F_2:\D\to\cA$ two functors sending all maps to $\lambda$-embeddings.  Suppose $F_1(d)\sim_\lambda F_2(d)$ for \textsl{at least one} $d\in\D$.  Then $\colim F_1\sim_\lambda\colim F_2$.
\end{cor}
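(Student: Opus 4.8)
The plan is to deduce the result from transitivity of $\sim_\lambda$, sidestepping Theorem~\ref{ladder} itself: that theorem needs a natural transformation $F_1\ra F_2$ with $\lambda$-embedding components, which we are \emph{not} given here. What we are given instead is enough to bridge the two colimits ``vertically'' at a single index. First I would invoke Corollary~\ref{chain}$(i)$ twice. Since $F_1$ and $F_2$ each send every morphism of $\D$ to a $\lambda$-embedding, the colimit cocone of each functor consists of $\lambda$-embeddings. In particular, at the distinguished index $d\in\D$ with $F_1(d)\sim_\lambda F_2(d)$, the cocone components
\[  k^{(1)}_d:F_1(d)\lra\colim F_1, \qquad k^{(2)}_d:F_2(d)\lra\colim F_2  \]
are $\lambda$-embeddings. (The colimits exist by the standing assumption of this section.)

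Next I would use that a $\lambda$-embedding witnesses $\lambda$-equivalence: by Definition~\ref{elem} a $\lambda$-embedding $X\ra Y$ is equipped with a $\lambda$-dense set of spans between $X$ and $Y$, so by Definition~\ref{equi} it forces $X\sim_\lambda Y$. Applying this to $k^{(1)}_d$ and $k^{(2)}_d$, and using the symmetry of $\sim_\lambda$ from Remark~\ref{max}(1), I obtain $\colim F_1\sim_\lambda F_1(d)$ and $F_2(d)\sim_\lambda\colim F_2$. Combining these with the hypothesis $F_1(d)\sim_\lambda F_2(d)$ and invoking transitivity of $\sim_\lambda$ in the $\lambda$-mono-generated category $\cA$ (Corollary~\ref{mono-eq}) yields the chain
\[  \colim F_1 \sim_\lambda F_1(d) \sim_\lambda F_2(d) \sim_\lambda \colim F_2,  \]
hence $\colim F_1\sim_\lambda\colim F_2$, as desired.

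Since the argument is purely formal, there is no genuine obstacle; the only point deserving care is recognizing that, unlike Theorem~\ref{ladder}, no naturality is required, because a single index $d$ at which the diagrams are $\lambda$-equivalent already connects the two colimits through the $\lambda$-embeddings furnished by Corollary~\ref{chain}$(i)$. One could alternatively imitate the proof of Theorem~\ref{ladder} more directly, transporting a span from $\cS^\ma_\lambda(F_1(d),F_2(d))$ up to $\cS^\ma_\lambda(\colim F_1,\colim F_2)$ via Lemma~\ref{step}; but the transitivity route is shorter and makes transparent why the weak hypothesis ``for at least one $d$'' suffices.
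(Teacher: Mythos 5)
Your argument is correct and is essentially identical to the paper's own one-line proof: both apply Corollary~\ref{chain}$(i)$ to get that the cocone components $F_i(d)\to\colim F_i$ are $\lambda$-embeddings (hence witness $\lambda$-equivalence) and then conclude by transitivity of $\sim_\lambda$. Your write-up just spells out the intermediate observations more explicitly.
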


Indeed, $\colim F_1\sim_\lambda F_1(d)\sim_\lambda F_2(d)\sim_\lambda\colim F_2$ by Cor.~\ref{chain}$(i)$ and $\sim_\lambda$ is transitive.  \qed

Given that $\lambda$-$\emb(\cA)$ inherits $\lambda$-directed colimits from $\cA$, it is tempting to ask whether it inherits being accessible or mono-accessible as well.  The answer is seen to be no.  An easy induction shows that for each ordinal $\alpha$, there exists a sentence $\phi(\alpha)\in\LL_{\infty\omega}$ in the signature of the binary relation $<$ such that the only model of $\phi(\alpha)$ (up to isomorphism)\ is $\alpha$.  For any regular $\lambda$, the category $\lambda$-$\emb\big(\str(<)\big)$ thus has a proper class of isolated objects (objects that are not the source or target of any morphism, except the identity).  A mono-accessible category can only have a set of connected components, however.

\end{document}